\numberwithin{equation}{section}
\newcommand{\SU}{{\rm SU}}
\newcommand{\Ad}{\mathrm{Ad}}
\newcommand{\End}{{\mathrm{End}}}
\renewcommand{\epsilon}{\varepsilon}
\newcommand{\Hom}{{\mathrm{Hom}}}
\newcommand{\diag}{\mathrm{diag}}
\newcommand{\dvol}{\mathop\mathrm{dvol}\nolimits}
\newcommand{\id}{\mathrm{id}}
\newcommand{\im}{\mathop{\mathrm{im}}}
\def\<{\mathopen{}\left<}
\def\>{\right>\mathclose{}}
\def\({\mathopen{}\left(}
\def\){\right)\mathclose{}}
\newtheorem{theorem}{Theorem}
\newtheorem{corollary}{Corollary}
\newtheorem{example}{Example}
\newtheorem{lemma}{Lemma}
\newtheorem{proposition}{Proposition}
\newtheorem{remark}{Remark}
\numberwithin{equation}{section}
\author{Jason D. Lotay \\ University College London \and  Goncalo Oliveira \\ Duke University}
\title{$SU(2)^2$-invariant $G_2$-instantons} 
\date{}
\begin{document}
\maketitle


 \begin{abstract}
\noindent We initiate the systematic study of $G_2$-instantons with $SU(2)^2$-symmetry.  As well as developing foundational theory, we give existence, non-existence and classification results for these instantons.  We particularly focus on $\mathbb{R}^4\times S^3$ with its two explicitly known distinct holonomy $G_2$ metrics, which have different volume growths at infinity, exhibiting the different behaviour of instantons in these settings.  We also give an explicit example of sequences of $G_2$-instantons where ``bubbling'' and ``removable singularity'' phenomena occur in the limit.
\end{abstract}

\tableofcontents

\section{Introduction}

In this article we study $G_2$-instantons: these are examples of Yang--Mills connections on Riemannian manifolds whose holonomy group is contained in the exceptional Lie group $G_2$ (so-called $G_2$-manifolds).  
These connections are, in a sense, analogues of anti-self-dual connections in dimension 4, and are likewise hoped to be used to understand the geometry and topology of $G_2$-manifolds, via the construction of enumerative invariants.  Our focus is on $G_2$-instantons on 
$G_2$-manifolds where both the connections and ambient $G_2$ geometry enjoy $SU(2)^2$-symmetry. In particular, as a $G_2$-manifold is Ricci flat, for it to admit continuous symmetries it must be noncompact.  By restricting to this case, we are able to 
shed light on the still rather poorly understood theory of $G_2$-instantons, in an explicit setting.  In particular, we give new existence and non-existence results for $G_2$-instantons. Furthermore, we can see how general theory works in practice, examine how the ambient geometry affects the $G_2$-instantons and give local models for the behaviour of $G_2$-instantons on compact $G_2$-manifolds.

\subsection[\texorpdfstring{$G_2$-instantons}{G2-instantons}]{{\boldmath $G_2$}-instantons}

Let $(X^7,\varphi)$ be a $G_2$-manifold\footnote{For further background on $G_2$-manifolds, the reader may wish to consult Joyce's book \cite{Joyce2000}.}, which implies the 7-manifold $X^7$ is endowed with a $3$-form $\varphi$ which is closed and determines a Riemannian metric $g$ with respect to which $\varphi$ is also coclosed. We shall denote $\ast \varphi$ by $\psi$ for convenience.  Let $P \rightarrow X$ be a principal bundle with structure group $G$ which we suppose to be a  compact and semisimple Lie group. A connection $A$ on $P$ is said to be a $G_2$-instanton if
\begin{equation}\label{eq:G2Instanton}
F_A \wedge \psi = 0.
\end{equation}
Equivalently, $G_2$-instantons satisfy the following $G_2$-analogue of the ``anti-self-dual''  condition:
\begin{equation}\label{eq:ASD}
F_A\wedge\varphi=-*F_A.
\end{equation}
As far as the authors are aware, the first time $G_2$-instantons appeared in the literature was in \cite{Corrigan1983}. This reference investigates generalizations of the anti-self-dual gauge equations, in dimension greater than $4$, and $G_2$-instantons appear there as an example.\\
More recently, the study of $G_2$-instantons has gained a special interest,  primarily due to Donaldson--Thomas' suggestion  \cite{Donaldson1998} that it may be possible to use $G_2$-instantons to define invariants for $G_2$-manifolds, inspired by Donaldson's pioneering work on anti-self-dual connections on  4-manifolds. Later Donaldson--Segal \cite{Donaldson2009}, Haydys \cite{Haydys2011}, and Haydys--Walpuski \cite{Haydys2015} gave further insights regarding that possibility.\\
On a compact holonomy $G_2$-manifold $(X^7, \varphi)$ any harmonic $2$-form is ``anti-self-dual'' as in \eqref{eq:ASD}, hence  any complex line bundle $L$ on $X$ admits a $G_2$-instanton, namely that whose curvature is the harmonic representative of $c_1(L)$.  However, the construction of non-abelian $G_2$-instantons on compact $G_2$-manifolds is much more involved. In the compact case, the first such examples were constructed by Walpuski  \cite{Walpuski2011}, over Joyce's $G_2$-manifolds (see \cite{Joyce2000}).  S\'a Earp and Walpuski's work  \cites{SaEarp2015 ,Walpuski2015} gives an abstract construction of $G_2$-instantons, and currently one example, on the other known class of compact $G_2$-manifolds, namely ``twisted connected sums'' (see \cites{Kovalev2003,Haskins}).\\
The goal of this paper is to perform a general analysis of $G_2$-instantons on some noncompact $G_2$-manifolds.  In the noncompact setting, the first examples of $G_2$-instantons where found by Clarke in \cite{Clarke14}, and further examples were given by the second author in \cite{Oliveira2014}. 
In this article we primarily study $G_2$-instantons on $\mathbb{R}^4 \times S^3$, which has two known complete and explicit $G_2$-holonomy metrics, namely: the Bryant-Salamon (BS) metric \cite{Bryant1989} and the Brandhuber et al.~(BGGG) metric \cite{Brandhuber2001}. Both these metrics have $\{0\} \times S^3$ as an associative submanifold: such area-minimizing submanifolds in $G_2$-manifolds have both known and expected relationships with $G_2$-instantons, so studying these metrics allows us to  verify known theory and test  expectations.  Of particular note is that the BS and BGGG metrics have different volume growths at infinity, and are in a sense analagous to the flat and Taub-NUT hyperk\"ahler metrics on $\mathbb{R}^4$. Our results exhibit the similarities and differences in the existence theory for $G_2$-instantons for these metrics.

\subsection{Summary}

The aim of the article is to start the systematic study of $SU(2)^2$-invariant $G_2$-instantons.  We now summarize the organization of our paper and the main results.\\
Both the BS and BGGG metric have $SU(2)^2$ as a subgroup of their isometry group: in fact, $SU(2)^2$ acts with cohomogeneity-$1$.  All known complete $SU(2)^2$-invariant $G_2$-manifolds of cohomogeneity-$1$ actually 
have $SU(2)^2\times U(1)$-symmetry. These facts are summarized in $\S$\ref{sec:SU(2)^2Equations}, where  we also deduce the ODEs for $SU(2)^2$-invariant $G_2$-instantons.  Since most of the known $SU(2)^2$-invariant $G_2$-manifolds are asymptotically locally conical (ALC) we prove some general results on the structure at infinity and asymptotic behaviour of $G_2$-instantons on ALC $G_2$-manifolds in $\S$\ref{ss:ALC}, relating them to Calabi--Yau monopoles.   In $\S$\ref{sec:solutions}, we give some explicit elementary solutions to the equations, namely flat connections and abelian ones.  Already in this simple abelian setting we see a marked difference between the $G_2$-instantons for the BS and BGGG metric.\\
In $\S$\ref{sec:SU(2)^3invariant} we focus on the BS metric, which has isometry group $SU(2)^3$. This also acts with cohomogeneity-$1$ and has a unique singular orbit which is the associative $S^3$. We describe $SU(2)^3$-invariant $G_2$-instantons with gauge group $SU(2)$. A dichotomy arises from the two possible homogeneous bundles over the associative $S^3$ on which the instantons can extend: let $P_1$ and $P_{\id}$ denote these two bundles.\\  In the $P_1$ case, by combining our study in $\S$\ref{sec:SU(2)^3invariant} with our work  in $\S$\ref{sec:SU(2)^2xU(1)invariant} we obtain our first main result.  

\begin{theorem}
Let $A$ be an irreducible $SU(2)^2\times U(1)$-invariant $G_2$-instanton with gauge group $SU(2)$ on the BS metric. If $A$ smoothly extends over $P_1$, then it is one of Clarke's $G_2$-instantons in \cite{Clarke14}.
\end{theorem}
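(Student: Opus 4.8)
The plan is to convert the instanton condition into a cohomogeneity-one ODE problem and then to show that the regularity imposed by the bundle $P_1$ leaves no freedom beyond Clarke's family. First I would take the reduced equations for $SU(2)^2$-invariant $G_2$-instantons from $\S$\ref{sec:SU(2)^2Equations}, specialise the coefficient functions to the BS metric, and impose the additional $U(1)$ of the $SU(2)^2\times U(1)$-symmetry as in $\S$\ref{sec:SU(2)^2xU(1)invariant}. Because $U(1)\subset SU(2)$ gives $SU(2)^2\times U(1)\subset SU(2)^3$, this extra symmetry should collapse the connection to a handful of functions of the cohomogeneity-one variable $r$, yielding a first-order ODE system whose coefficients are explicit in the BS profile functions. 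The $G_2$-instanton equation \eqref{eq:ASD} then becomes a flow in these functions.

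Next I would translate the hypothesis that $A$ extends smoothly over $P_1$ into boundary conditions at the singular orbit $\{0\}\times S^3$. Smooth extension over a prescribed homogeneous bundle fixes the limiting homogeneous connection on the associative $S^3$ together with the leading (indicial) behaviour of the radial functions, forcing some components to vanish to a definite order and fixing the others at $r=0$. I expect this to be the decisive step: the $P_1$ data --- as opposed to the $P_{\id}$ data --- should be compatible only with the more symmetric configurations, so that an $SU(2)^2\times U(1)$-invariant solution extending over $P_1$ is in fact forced to be $SU(2)^3$-invariant, reducing the problem to the single-function analysis of $\S$\ref{sec:SU(2)^3invariant}. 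Irreducibility is then used to discard the flat and abelian branches identified in $\S$\ref{sec:solutions}, which correspond to degenerate choices of this initial data.

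The remaining, and principal, obstacle is the global integration and uniqueness of the reduced flow. I would show that the $P_1$ initial data singles out, up to gauge, a one-parameter family of solutions that stays irreducible and extends to all $r\ge 0$, and then match this family with Clarke's closed-form $G_2$-instantons in \cite{Clarke14}. The delicate points are ensuring that the regularity at $r=0$ does not spawn additional solution branches and that no admissible solution degenerates or blows up at finite $r$; I would control these using the explicit BS coefficient functions together with a monotonicity or conserved-quantity argument for the flow, which should also confirm that the surviving family is exactly the one Clarke writes down.
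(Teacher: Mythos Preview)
Your plan has the right overall shape, but there is a genuine gap at the point you flag as ``the decisive step.'' You expect that the $P_1$ extension condition by itself forces the connection to be $SU(2)^3$-invariant, reducing immediately to the single-function analysis of \S\ref{sec:SU(2)^3invariant}. This expectation is false. In the paper's Proposition~\ref{prop:LocalG2Instantons}, the $P_1$ boundary data on any $SU(2)^2\times U(1)$-invariant $G_2$-metric yields a \emph{two}-parameter family of local instantons, parametrised by $(f_1^+,g_1^+)\in\mathbb{R}^2$; the $P_1$ condition only forces $f^-\equiv g^-\equiv 0$, not $f^+=g^+$. The $SU(2)^3$-invariant solutions are precisely the diagonal $f_1^+=g_1^+$, so the regularity analysis at the singular orbit cannot, by itself, collapse the problem to Clarke's one-parameter family.

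What actually closes the argument in the BS case (Theorem~\ref{thm:BS.cor}) is a conserved quantity specific to that metric. After the substitution $F=f^+/A_1$, $G=g^+/A_1$ and the reparametrisation $ds/dt=A_1$, the reduced system becomes $F'=-G^2$, $G'=-FG$, which conserves $c=F^2-G^2$. A short case analysis on $c$ then shows: if $c<0$ the solution blows up in finite time; if $c>0$ one is forced into $G\equiv 0$, which is the reducible abelian branch excluded by irreducibility; only $c=0$ remains, giving $f^+=\pm g^+$ and hence $SU(2)^3$-invariance. So the conserved-quantity argument is not merely a technical device to control blow-up of an already one-parameter family, as you suggest in your final paragraph --- it is the mechanism that eliminates the second parameter, and the irreducibility hypothesis enters exactly here, to discard $c>0$. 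Your outline should be revised to place the conserved quantity and the trichotomy on its sign at the centre, rather than hoping the $P_1$ indicial data does this work.
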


\noindent See Theorems \ref{prop:Clarke} and \ref{thm:BS.cor} for more precise statements and an explicit formula for the instantons, and see Corollary \ref{cor:U1} for a classification of the reducible instantons. Here we mention that Clarke's $G_2$-instantons form a family $\lbrace A^{x_1} \rbrace$, parametrized by $x_1\geq 0$, and the curvature of these connections decays at infinity. \\
In the $P_{\id}$ case, we find (in Theorem \ref{thm:Alim}) a new explicit $G_2$-instanton $A^{\lim}$.  We show in Theorem \ref{thm:Compactness} and Corollary \ref{cor:delta} that $A^{\lim}$ is, in a certain (precise) sense, the limit of Clarke's ones as $x_1 \rightarrow + \infty$. We state our second main result informally, which confirms expectations from \cites{Tian2000,Tao2004}.

\begin{theorem}\label{thm:CompactnessUnprecise}
Let $\lbrace A^{x_1} \rbrace$ be a sequence of Clarke's $G_2$-instantons with $x_1 \rightarrow + \infty$.
\begin{itemize}
\item[(a)] After a suitable rescaling, the family $\lbrace A^{x_1} \rbrace$ bubbles off an anti-self-dual connection transversely to the associative $ S^3 = \lbrace 0 \rbrace \times S^3$. 
\item[(b)] The connections $A^{x_1}$ converge uniformly with all derivatives to $A^{\lim}$ on every compact subset of $(\mathbb{R}^4 \setminus \{0\})\times S^3$. 
\item[(c)] The functions $\vert F_{A^{x_1}} \vert^2 - \vert F_{A_{\lim}} \vert^2$ are integrable and converge to $8 \pi^2 \delta_{\lbrace 0 \rbrace \times S^3}$, where $\delta_{\lbrace 0 \rbrace \times S^3}$ denotes the delta current associated with the associative $S^3$.
\end{itemize} 
\end{theorem}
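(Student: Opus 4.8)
The proof reduces entirely to the analysis of explicit ordinary differential equations, since the cohomogeneity-one symmetry forces all the connections in question to be described by a small number of functions of a single radial coordinate; write $r$ for this coordinate, so that the associative $S^3=\{0\}\times S^3$ sits at $r=0$ and, near $r=0$, the transverse $\mathbb{R}^4$-fibres over it are recovered as metric cones over the round $S^3$ with radial parameter comparable to $r$. The plan is to take the explicit formulae for Clarke's family $\{A^{x_1}\}$ (from Theorems \ref{prop:Clarke} and \ref{thm:BS.cor}) and for $A^{\lim}$ (from Theorem \ref{thm:Alim}), and to read off the three assertions directly from the behaviour of the connection functions and of the energy density $|F_{A^{x_1}}|^2$ as $x_1\to+\infty$.

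First I would prove (b). On any compact $K\subset(\mathbb{R}^4\setminus\{0\})\times S^3$ the coordinate $r$ is bounded below away from $0$, so it suffices to show that the functions defining $A^{x_1}$ converge, uniformly with all $r$-derivatives on compact $r$-intervals bounded away from $0$, to those defining $A^{\lim}$. Because these functions are explicit and depend smoothly on the parameter, this is a direct limit computation; continuity of the $G_2$-instanton equation \eqref{eq:ASD} in the data then upgrades convergence of the profiles to $C^\infty$-convergence of the connections on $K$.

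Next, for (a), I would identify the bubbling scale. Restricted to a transverse $\mathbb{R}^4$-fibre, the connection $A^{x_1}$ makes its transition from its charge-one boundary value at $r=0$ to its asymptotic value across a shrinking annulus of radius $\rho(x_1)\to 0$; letting $y$ denote the transverse coordinate, I rescale by $y\mapsto \rho(x_1)\,y$. Under this rescaling the induced metric on the fibre converges to the flat metric on $\mathbb{R}^4$, and in the blow-up limit the restriction of \eqref{eq:ASD} to the fibre degenerates to the anti-self-dual equation on flat $\mathbb{R}^4$. The rescaled connections $A^{x_1}(\rho(x_1)\,\cdot\,)$ then converge to a finite-energy anti-self-dual connection; the residual $SU(2)$-symmetry of the fibre together with the charge-one boundary condition pin this limit down to the basic (BPST) instanton, which is the asserted bubble.

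Finally, (c) is an energy bookkeeping. The charge-one anti-self-dual instanton on $\mathbb{R}^4$ has energy $\int_{\mathbb{R}^4}|F|^2\,\dvol=8\pi^2$, and as its scale $\rho\to 0$ its energy density converges to $8\pi^2\,\delta_{0}$ on the fibre. Using the explicit profiles I would show that $\int_{\mathbb{R}^4}\big(|F_{A^{x_1}}|^2-|F_{A^{\lim}}|^2\big)\,\dvol\to 8\pi^2$ on each transverse fibre, with the excess concentrating at the fibre origin; integrating over $S^3$ then yields the convergence of $|F_{A^{x_1}}|^2-|F_{A^{\lim}}|^2$ to $8\pi^2\,\delta_{\{0\}\times S^3}$ as currents. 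The main obstacle is precisely this step together with the identification of $\rho(x_1)$: one must choose the rescaling correctly, show the bubble is nontrivial and of charge exactly one, and above all verify the sharp energy identity, namely that exactly $8\pi^2$ concentrates, with no energy lost into a neck region or escaping as $r\to\infty$. In the present symmetric setting all of this becomes the asymptotic evaluation of explicit integrals as $x_1\to\infty$, which can be controlled by dominated and monotone convergence once the transition profile and the rate $\rho(x_1)$ are known.
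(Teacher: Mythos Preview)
Your overall plan---exploit the explicit cohomogeneity-one profiles and read off the three statements---is exactly the paper's strategy, and your argument for (b) is essentially what the paper does: compute $A^{x_1}-A^{\lim}$ from the explicit formulae and observe it is $O((1+x_1)^{-1})$ uniformly on compacta bounded away from $t=0$.

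For (a), however, your outline diverges from the paper and contains a confusion. The connections $A^{x_1}$ extend smoothly on $P_1$, the bundle with \emph{trivial} isotropy homomorphism, so there is no ``charge-one boundary value at $r=0$''; the transition you describe is not governed by any boundary condition of that sort. The paper does not invoke compactness for finite-energy ASD connections or any uniqueness-by-symmetry argument to identify the bubble. Instead it simply writes down the rescaling explicitly: with $\delta=\delta(x_1,\lambda)=\sqrt{2\lambda/x_1}$, one computes $(s^p_\delta)^*A^{x_1}$ using the Taylor expansions of $A_1,B_1$ near $t=0$ and compares term by term with the formula $A^{\mathrm{ASD}}_\lambda=\frac{\lambda t^2}{1+\lambda t^2}\sum_iT_i\otimes\eta_i^+$, obtaining $\|(s^p_\delta)^*A^{x_1}-A^{\mathrm{ASD}}_\lambda\|_{C^k}\le c_k\lambda^2/x_1$. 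So the scale $\rho(x_1)$ you leave undetermined is $\sqrt{2/x_1}$, and the identification of the limit is a direct computation, not an abstract pinning-down.

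For (c), your proposed route (fibrewise energy accounting plus integration over $S^3$) is conceptually reasonable but is not what the paper does, and as you yourself flag, controlling the neck and the non-$L^2$ tails this way would require real work. The paper instead uses \eqref{eq:Norm_of_curvature} to write $|F_{A^{x_1}}|^2-|F_{A^{\lim}}|^2$ as a completely explicit rational function of $r$ and $x_1$ (a finite sum $\sum_{n,k} q_{n,k}(r-1)^k x_1^n/\big((r+1)^4 r^6 (r^2x_1-x_1+3)^4\big)$), from which integrability is immediate. The delta-convergence is then obtained by reducing the $7$-dimensional integral to a $1$-dimensional one via $SU(2)^3$-invariance, showing that all terms with $(n,k)\neq(2,0)$ contribute zero in the limit, and evaluating the surviving term explicitly to obtain $8\pi^2\cdot\mathrm{Vol}(\{0\}\times S^3)$. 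This sidesteps entirely the neck/tail issues you raise.
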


\noindent Whilst (a) gives the familiar ``bubbling'' behaviour of sequences of instantons, with curvature concentrating on an associative $S^3$ by (c), we can interpret (b) as a ``removable singularity'' phenomenon since $A^{\lim}$ is a smooth connection on $\mathbb{R}^4\times S^3$.   In proving Theorem \ref{thm:CompactnessUnprecise}, we show that as $\lbrace A^{x_1} \rbrace$ bubbles along the associative $S^3$ one obtains a Fueter section, as in \cites{Donaldson2009, Haydys2011, Walpuski2017}.  Here this is just a constant map  from $S^3$ to the moduli space of anti-self dual connections on $\mathbb{R}^4$ (thought of as a fibre of the normal bundle), taking value at the basic instanton on $\mathbb{R}^4$.  Since $8\pi^2$ is the Yang--Mills energy of the basic instanton, we can also view (c) as the expected  ``conservation of energy''. \\ 
We also give a local existence result for $G_2$-instantons in a neighbourhood of the associative $S^3$ that extend over $P_{\id}$ in Proposition \ref{prop:LocalExistenceBS_1}. The outcome is that there is a local one-parameter family of such instantons. Of these only one, i.e.~$A^{\lim}$, is shown to extend over the whole of $\mathbb{R}^4 \times S^3$. The other ones may blow up at a finite distance to $\lbrace 0 \rbrace \times S^3$, as suggested by numeric simulations.  Some of the necessary analysis 
leading to our local existence results is given in Appendix \ref{app:R4xS3}.\\
In order to use similar techniques for $G_2$-intantons on the BGGG metric, we must reduce the symmetry group to $SU(2)^2 \times U(1)$. This acts with cohomogeneity-$1$ both on BGGG and BS and, as before, its only singular orbit is the associative $\lbrace 0 \rbrace \times S^3$. Hence, in $\S$\ref{sec:SU(2)^2xU(1)invariant} we describe $SU(2)^2 \times U(1)$-invariant $G_2$-instantons on cohomogeneity-$1$ metrics with that symmetry on $\mathbb{R}^4 \times S^3$. As a result, the same dichotomy appears in that the $G_2$-instantons can extend over the associative $S^3$ either on the homogeneous bundle $P_1$ or $P_{\id}$. We can thus compare the existence of $G_2$-instantons for the BS and BGGG metrics. While there is a $1$-parameter family of $G_2$-instantons (Clarke's ones) that smoothly extend over $P_1$ on the BS metric, for the BGGG metric we instead have the following.

\begin{theorem}\label{th:NoG2Inst}
The moduli space $\mathcal{M}^{BGGG}_{P_1}$ 
of irreducible $SU(2)^2 \times U(1)$-invariant $G_2$-instantons with gauge group $SU(2)$ on the BGGG metric, smoothly extending on $P_1$, contains a nonempty (and unbounded) open set $U \subset \mathbb{R}^2$. Moreover, the following holds.
\begin{itemize}
\item[(a)] The instantons in $U$ have quadratically decaying curvature. 
\item[(b)] The map $Hol_{\infty} : U \rightarrow U(1) \subset SU(2)$, which evaluates the holonomy of the $G_2$-instanton along the finite size circle at $+\infty$, is surjective.
\end{itemize}
\end{theorem}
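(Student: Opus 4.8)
The plan is to use the $SU(2)^2\times U(1)$-symmetry to turn the instanton condition \eqref{eq:G2Instanton} into a finite-dimensional shooting problem, and then to control the resulting flow from the singular orbit out to the ALC end. First I would invoke the reduction of $\S$\ref{sec:SU(2)^2xU(1)invariant} to write an invariant connection in terms of finitely many functions of the cohomogeneity-one coordinate $r\in[0,\infty)$, so that \eqref{eq:G2Instanton} becomes a first-order ODE system $\dot u=\mathcal F(r,u)$, with the associative orbit $\{0\}\times S^3$ at $r=0$ and the ALC end as $r\to\infty$. Smooth extension over $P_1$ imposes boundary conditions at the singular point $r=0$; analysing the indicial structure of $\mathcal F$ there (as set up in Appendix \ref{app:R4xS3}) should show that, once the forced regular data are fixed, the smooth local solutions form a two-parameter family $u_{(a,b)}$, which I would construct by a power-series/contraction-mapping argument. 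Passing to gauge-equivalence classes then gives a two-dimensional chart of local solutions, the source of the expected $U\subset\mathbb{R}^2$.

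I would next define $U$ to be the set of $(a,b)$ for which $u_{(a,b)}$ exists on all of $[0,\infty)$ and converges to a limiting configuration along the finite ALC circle. Openness of $U$ is the central point: it should follow from continuous dependence on $(a,b)$ together with a stability statement for the decaying regime, so that once a trajectory enters the attracting asymptotic band it persists under small perturbations of the data. For nonemptiness I would produce a single global solution, either by a direct shooting argument or by perturbing one of the explicit abelian solutions of $\S$\ref{sec:solutions}; for unboundedness I would rescale the parameters and verify that global existence survives along the resulting family.

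Part (a) I would read off from the ALC analysis of $\S$\ref{ss:ALC}, which relates these instantons to Calabi--Yau monopoles: on the ALC end the linearisation of $\mathcal F$ about the limiting reducible configuration has slowest decaying mode of order $r^{-2}$ --- the fixed size of the ALC circle keeps the relevant indicial exponents bounded away from $0$ --- so the nonlinear solution inherits $|F_A|=O(r^{-2})$, i.e.\ quadratic decay. For (b) I would define $Hol_\infty$ via the limiting connection along the finite circle, observe that it is valued in $U(1)$ because the asymptotic configuration reduces to the maximal torus, and check that it depends continuously on $(a,b)$. Surjectivity onto $U(1)\cong S^1$ I would then obtain by a winding argument: choosing a continuous path running to infinity inside the unbounded set $U$, the asymptotic phase of $u_{(a,b)}$ should increase without bound, so its image under $Hol_\infty$ sweeps out all of $U(1)$.

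The main obstacle will be the global existence together with the asymptotic matching implicit in the definition of $U$: ruling out finite-$r$ blow-up of the nonlinear system over the whole half-line, and showing that an open set of regular data at $r=0$ actually flows into the configurations with the correct ALC behaviour. Pinning down the sharp $r^{-2}$ rate and, above all, establishing surjectivity of $Hol_\infty$ --- which needs good control of the asymptotic phase as a function of the initial data --- is where the delicate estimates will concentrate.
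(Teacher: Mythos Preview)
Your overall architecture matches the paper's: reduce to an ODE system via Proposition \ref{prop:ODEsSU(2)xU(2)}, classify local solutions near the singular orbit (Proposition \ref{prop:LocalG2Instantons}), and then control the flow out to the ALC end. However, the substance of the global step is missing, and what you propose in its place would not quite work as stated.

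The paper does not prove openness of $U$ abstractly via a stability-of-the-attracting-regime argument. Instead it exhibits an \emph{explicit} open unbounded region $\{f_1^+ \geq \tfrac{1}{2}+g_1^+ > \tfrac{1}{2}\}$ and proves directly that every trajectory starting there is global and bounded. The mechanism is a concrete Lyapunov-type quantity: writing $F=f^+/A_1$, $G=g^+/A_1$, the system becomes $F'=-G^2$, $G'=(H-F)G$ with $H\in(0,1)$ specific to BGGG, and one computes $\tfrac{d}{ds}\big((F-1)^2-G^2\big)=2(1-H)G^2>0$. So if $(F-1)^2\geq G^2$ initially (which is exactly the condition $f_1^+\geq \tfrac{1}{2}+g_1^+$), it stays nonnegative, trapping $F>1$ and $0<G<F-1$, hence boundedness and global existence. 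Your proposal to get nonemptiness by ``perturbing one of the explicit abelian solutions'' is problematic: the abelian instantons are exactly the $G\equiv 0$ locus, and nearby irreducible data with small $g_1^+>0$ but $f_1^+\leq \tfrac{1}{2}$ actually blow up (Theorem \ref{thm:BGGG.NoInst}), so naive perturbation off the reducible family fails. Likewise, the general ``attracting asymptotic band'' heuristic is delicate here because the limiting set $\{G=0\}$ contains both good ($F>1$) and bad ($F\leq 1$) behaviour; you really need the monotone quantity to separate them.

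For part (a), the paper reads off $|F_A|=O(t^{-2})$ by direct computation from the explicit curvature formula (Lemma \ref{lem:Curvature}) and the known asymptotics $A_1\to 1$, $A_2,B_1,B_2\sim t$ of the BGGG coefficients, rather than via an indicial analysis of $\S$\ref{ss:ALC}. For part (b), your winding argument is in the right spirit but the paper's route is more direct and avoids any topological subtlety: the monotone quantity gives the two-sided bound $1+\sqrt{(2f_1^+-1)^2-(2g_1^+)^2}\leq F_\infty(f_1^+)\leq 2f_1^+$, so for fixed $g_1^+$ the limit $F_\infty$ ranges continuously over an interval of length $>1$ as $f_1^+\to\infty$, and $Hol_\infty=\exp(2\pi F_\infty T_1)$ therefore covers $U(1)$.
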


\noindent The more precise version of this result appears as Theorem \ref{thm:BGGG.Inst} and Corollary \ref{cor:Holonomy}. It is typical in gauge theory to assume a bound on the curvature of the connection.  One might be tempted to impose an $L^2$-bound, but this is too restrictive in the $G_2$ setting: in particular, Clarke's examples do not satisfy this.  Therefore, we impose a weak natural curvature bound in deriving Theorem \ref{th:NoG2Inst}, namely that the curvature stays bounded. We also prove  that there is a 2-parameter family of locally defined instantons on $P_1$ for the BGGG metric which do not extend globally with bounded curvature: this is Theorem \ref{thm:BGGG.NoInst}.\\
Finally, we give local existence results for $G_2$-instantons with $SU(2)^2 \times U(1)$-symmetry in a neighbourhood of an associative $S^3$, on any $SU(2)^2 \times U(1)$-invariant $G_2$-metric. In Proposition \ref{prop:LocalG2Instantons}, we show the existence of a $2$-parameter family of locally defined $G_2$-instantons smoothly extending over $P_1$, whereas in Proposition \ref{prop:LocalG2Instantons2} we show the existence of a $1$-parameter family of $G_2$-instantons smoothly extending over $P_{\id}$. 
 This yields the possibility for further study of $G_2$-instantons even on the well-known Bryant--Salamon metric on $\mathbb{R}^4\times S^3$.

\subsection*{Acknowledgments}

We would like to thank Tom Beale, Robert Bryant, Lorenzo Foscolo, Derek Harland, Mark Haskins, Thomas Madsen,  David Sauzin and Mark Stern for discussions. We would particularly like to thank Lorenzo Foscolo for introducing us to Eschenburg--Wang's analysis for extending invariant tensors over singular orbits \cite{Eschenburg2000}, and the reference \cite{Malgrange1974} for singular initial value problems.  The first author was partially supported by EPSRC grant EP/K010980/1.

\section[\texorpdfstring{The {\boldmath $SU(2)^2$}-invariant equations}{The SU(2)xSU(2)-invariant equations}]{The {\boldmath $SU(2)^2$}-invariant equations}\label{sec:SU(2)^2Equations}

In this section we derive the ordinary differential equations (ODEs) which describe invariant $G_2$-instantons on $SU(2)^2$-invariant $G_2$-manifolds of cohomogeneity-$1$. We begin by giving the general framework 
of the evolution equations approach to $G_2$-manifolds and $G_2$-instantons in $\S$\ref{ss:evol}.  We then apply this theory in $\S$\ref{sec:G2metrics} to the case of the invariant $G_2$-manifolds we wish to study, leading to systems of ODEs describing the $G_2$-manifolds, and summarise the known complete examples 
which arise from this approach.  Since most of these invariant $G_2$-manifolds are asymptotically locally conical (ALC), in $\S$\ref{ss:ALC} we give a brief discussion of the asymptotic behaviour of ALC $G_2$-manifolds and their $G_2$-instantons.  We then give a short presentation of the theory of invariant fields on homogeneous bundles in $\S$\ref{ss:hombdles} so that we can 
obtain the general expression for an invariant connection on a principal orbit and its curvature.  Combining these considerations yields our desired ODEs in $\S$\ref{ss:ODEs}, which we then solve in elementary cases in $\S$\ref{sec:solutions}.

\subsection{Evolution equations}\label{ss:evol}

In the work to be developed it is relevant to analyze the case when $X^7=I_t
\times M^6$ and $I_t \subset \mathbb{R}$ is an interval with coordinate $t\in\mathbb{R}$. Let $(\omega(t), \Omega_2(t))$ be a $1$-parameter family of $SU(3)$-structures parametrized by $t \in I_t$ and write the $G_2$-structure
\begin{equation}\label{eq:G2str}
\varphi = dt \wedge \omega(t) + \Omega_1(t) , \quad \psi= \frac{\omega^2(t)}{2} - dt \wedge \Omega_2(t),
\end{equation}
where $\Omega_1(t)=J_t \Omega_2(t)$ and $J_t$ is the almost complex structure determined by $\Omega_2(t)$.  (Recall that an $SU(3)$-structure on an almost complex $6$-manifold $(M,J)$ can be given by a pair of a real $(1,1)$-form $\omega$ and a real $3$-form $\Omega_2$ such that
$$\omega \wedge \Omega_2 =0 , \quad \omega^3 = - \frac{8}{3} \Omega_1 \wedge \Omega_2 , $$
where $\Omega_1=-J\Omega_2$.) This $G_2$-structure $\varphi$ is torsion-free  (i.e.~$d\varphi=0$ and $d\psi=0$) if and only if the $1$-parameter family $(\omega(t), \Omega_2(t))$ is a solution of the so-called ``Hitchin flow''\footnote{The nomenclature ``Hitchin flow'' is somewhat misleading since the system \eqref{eq:Hitchinflow} is not parabolic in any usual sense and it does not satisfy the typical regularity properties of geometric flows \cite{Bryant2010}.}, i.e.~if we write $\dot{f}=df/dt$, then
\begin{equation}\label{eq:Hitchinflow}
\dot{\Omega}_1= d\omega , \quad  \omega \wedge \dot{\omega}
= -d\Omega_2,
\end{equation}
subject to the constraints $d \Omega_1=0=d\omega^2$ for all $t$, which means that $(\omega(t), \Omega_2(t))$ is a family of half-flat $SU(3)$-structures
 solving \eqref{eq:Hitchinflow}.  (In fact, it is enough to impose the half-flat condition on the $SU(3)$-structure at some initial time and the evolution \eqref{eq:Hitchinflow} will then preserve this condition.)  For more on half-flat $SU(3)$-structures, in a case relevant to us, the reader can see \cite{Madsen2013}.
The resulting $G_2$-structure induces the metric $g=dt^2 + g_t$, where $g_t$ is the metric on $\lbrace t \rbrace \times M$ induced by the $SU(3)$-structure $(\omega(t), \Omega_2(t))$.\\
In this situation our bundle $P$ must be pulled back from $M$ and, working in temporal gauge, $A=a(t)$ is a $1$-parameter family of connections on $P$, so  $F_A = dt \wedge \dot{a} + F_a(t)$. Hence $A$ is a $G_2$-instanton, i.e.~solves \eqref{eq:G2Instanton}, if and only if
\begin{equation}\label{eq:evolution}
\dot{a} \wedge \frac{\omega^2}{2} - F_a \wedge \Omega_2 =0, \quad F_a \wedge \frac{\omega^2}{2} = 0.
\end{equation}
Using $\ast_t$ to denote the Hodge-$\ast$ associated with the $SU(3)$ structure $(\omega(t), \Omega_2(t))$ we have 
\begin{equation}
J_t \dot{a} = -\ast_t \left( \dot{a} \wedge \frac{\omega^2}{2}  \right)\quad\text{and}\quad\Lambda_t F_a = \ast_t \left( F_a \wedge \frac{\omega^2}{2} \right),
\end{equation}
where $\Lambda_t$ denotes the metric dual of the operation of wedging with $\omega$. 
Then, applying $\ast_t$ to both sides of \eqref{eq:evolution} we have
\begin{eqnarray}\label{eq:MEvolution1}
J_t \dot{a} & = & - \ast_t \left( F_a \wedge \Omega_2 \right), \\ \label{eq:MEvolution2}
\Lambda_t F_a & = & 0.
\end{eqnarray}

\begin{lemma}\label{lem:Constraint}
Let $X=I_t\times M$ be equipped with a $G_2$-structure $\varphi$ as in \eqref{eq:G2str} satisfying $\omega \wedge d \omega =0$ and $\omega \wedge \dot{\omega}=- d \Omega_2$, which is equivalent to $d\psi=0$. Then, $G_2$-instantons $A$ for $\varphi$ are in one-to-one correspondence with $1$-parameter families of connections $\lbrace a(t) \rbrace_{t \in I_t}$ solving the evolution equation
\begin{equation}\label{eq:IEvolution}
J_t \dot{a} =  - \ast_t \left( F_a \wedge \Omega_2 \right),
\end{equation}
subject to the constraint $\Lambda_t F_a =0$. Moreover, this constraint is compatible with the evolution: more precisely, if it holds for some $t_0 \in I_t$, then it holds for all $t \in I_t$.
\end{lemma}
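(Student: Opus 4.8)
The correspondence in the first sentence is essentially the content of the discussion preceding the lemma, so I would dispatch it quickly. In temporal gauge $A = a(t)$ one has $F_A = dt\wedge\dot a + F_a$, and substituting into $F_A\wedge\psi = 0$ with $\psi = \tfrac{\omega^2}{2} - dt\wedge\Omega_2$ separates, according to whether a $dt$ factor is present, into the two identities of \eqref{eq:evolution}. Applying $\ast_t$ converts the term containing $dt$ into \eqref{eq:IEvolution} (equivalently \eqref{eq:MEvolution1}) and the remaining term into the pointwise constraint $\Lambda_t F_a = 0$ (equivalently \eqref{eq:MEvolution2}), using $J_t\dot a = -\ast_t(\dot a\wedge\tfrac{\omega^2}{2})$ and $\Lambda_t F_a = \ast_t(F_a\wedge\tfrac{\omega^2}{2})$. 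This yields the claimed bijection between $G_2$-instantons and families $\{a(t)\}$ solving \eqref{eq:IEvolution} subject to the constraint, so the only substantive point is the final compatibility assertion.

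For that, I would set $\mu(t) := F_a\wedge\tfrac{\omega^2}{2}$, a $\fg$-valued $6$-form on $M$, and note $\Lambda_t F_a = \ast_t\mu$, so the constraint reads $\mu \equiv 0$. The plan is to prove the stronger statement that $\mu$ is constant in $t$ along any solution of \eqref{eq:IEvolution}; then $\mu(t_0) = 0$ immediately forces $\mu\equiv 0$. Differentiating and using $\dot F_a = d_a\dot a$ together with $\tfrac{d}{dt}\tfrac{\omega^2}{2} = \omega\wedge\dot\omega$, I obtain $\dot\mu = (d_a\dot a)\wedge\tfrac{\omega^2}{2} + F_a\wedge\omega\wedge\dot\omega$.

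The heart of the argument is to rewrite the first term. By the Leibniz rule for $d_a$ and the hypothesis $\omega\wedge d\omega = 0$ (that is, $d_M\tfrac{\omega^2}{2} = 0$, where $d_M$ is the exterior derivative on $M$), the boundary term vanishes and $(d_a\dot a)\wedge\tfrac{\omega^2}{2} = d_a\big(\dot a\wedge\tfrac{\omega^2}{2}\big)$. I then use the evolution equation in the form $\dot a\wedge\tfrac{\omega^2}{2} = F_a\wedge\Omega_2$ (the first equation of \eqref{eq:evolution}) to replace the argument, and expand once more: $d_a(F_a\wedge\Omega_2) = (d_aF_a)\wedge\Omega_2 + F_a\wedge d_M\Omega_2$. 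The Bianchi identity $d_aF_a = 0$ annihilates the first summand, while the second hypothesis $\omega\wedge\dot\omega = -d\Omega_2$ rewrites the second as $-F_a\wedge\omega\wedge\dot\omega$. Substituting back gives $\dot\mu = -F_a\wedge\omega\wedge\dot\omega + F_a\wedge\omega\wedge\dot\omega = 0$, proving $\mu$ is conserved and hence the compatibility.

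I expect the main obstacle to be bookkeeping rather than anything conceptual: one must track the Leibniz signs carefully (the single surviving boundary term carries the factor $(-1)^{\deg F_a} = +1$ because $F_a$ has even degree), and ensure that each of the two structural hypotheses is invoked exactly once --- $\omega\wedge d\omega = 0$ to discard the boundary term in the integration by parts, and $\omega\wedge\dot\omega = -d\Omega_2$ to produce the cancelling term. It is worth flagging that the conclusion is actually stronger than stated: $\mu$ is conserved, not merely preserved in its vanishing, and the exact cancellation is the algebraic reflection of the torsion-free condition $d\psi = 0$ assumed in the lemma. Were the sign accounting instead to yield a homogeneous linear ODE $\dot\mu = [\,\cdot\,,\mu]$, the conclusion would still follow from uniqueness of solutions with zero initial data, but I anticipate the cleaner identity $\dot\mu = 0$.
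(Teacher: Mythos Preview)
Your argument is correct and is essentially identical to the paper's proof: both differentiate $F_a\wedge\omega^2$ (resp.\ your $\mu=F_a\wedge\tfrac{\omega^2}{2}$), use $d(\omega^2)=0$ to write $(d_a\dot a)\wedge\omega^2=d_a(\dot a\wedge\omega^2)$, substitute the evolution equation $\dot a\wedge\tfrac{\omega^2}{2}=F_a\wedge\Omega_2$, and then apply Bianchi together with $\omega\wedge\dot\omega=-d\Omega_2$ to obtain exact cancellation. Your observation that one in fact gets $\dot\mu=0$ (not merely a linear ODE in $\mu$) is also what the paper obtains.
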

\begin{proof}
The evolution equation and the constraint follow immediately from equations \eqref{eq:MEvolution1} and \eqref{eq:MEvolution2}. 
To prove that the constraint is preserved by the evolution we compute 
\begin{align*}
\frac{d}{dt} \left( F_a \wedge  \omega^2 \right) & = d_a \dot{a} \wedge \omega^2 + F_a \wedge \frac{d}{dt}\omega^2 =  d_a (\dot{a} \wedge \omega^2) -2 F_a \wedge d \Omega_2\\
& = 2 d_a(F_a \wedge \Omega_2) - 2 F_a \wedge d \Omega_2 =  0,
\end{align*}
where we used \eqref{eq:Hitchinflow}, \eqref{eq:evolution},  
\eqref{eq:IEvolution} and the Bianchi identity $d_a F_a=0$.
\end{proof}

\begin{proposition}
In the setting of Lemma \ref{lem:Constraint}, suppose that the family of $SU(3)$-structures $(\omega(t), \Omega_2(t))$ depends real analytically on $t$, and let $a(0)$ be a real analytic connection on $P$ such that $\Lambda_0 F_a(0)=0$. Then there is $\epsilon>0$ and a $G_2$-instanton $A$ on $(-\epsilon , \epsilon) \times M^6$ with $A\vert_{\lbrace 0 \rbrace \times M^6}=a(0)$.
\end{proposition}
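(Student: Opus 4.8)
The plan is to recognise the evolution equation \eqref{eq:IEvolution} as a first-order quasilinear Cauchy problem and to solve it by the Cauchy--Kovalevskaya theorem, the constraint being automatically propagated by Lemma \ref{lem:Constraint}. Since $J_t$ is an almost complex structure on $M$, it acts invertibly on $1$-forms, so \eqref{eq:IEvolution} can be put in normal form
\[
\dot a = -J_t^{-1}\ast_t\(F_a \wedge \Omega_2\),
\]
in which $\dot a$ is expressed explicitly in terms of $a$ and its first derivatives along $M$ (recall $F_a = da + a\wedge a$ is affine in the first derivatives of $a$ and quadratic in $a$). To make this an equation for a section of a fixed vector bundle rather than for the affine object $a$, I would set $b(t) = a(t) - a(0)$, a $1$-parameter family of sections of $T^*M \otimes \ad P$ with $b(0)=0$; substituting $a = a(0)+b$ leaves an equation of the same type, with $\dot b$ expressed in terms of $b$ and its first derivatives along $M$.

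Next I would check that the right-hand side is real analytic in all of its arguments. In a local trivialisation over a coordinate chart of $M$, $F_a$ is quadratic in $a$ and affine in the first spatial derivatives, while the wedge with $\Omega_2$ and the operators $\ast_t$, $J_t^{-1}$ are algebraic in the $SU(3)$-structure; the latter depends real analytically on $t$ by hypothesis (nondegeneracy of the induced metric guarantees that $\ast_t$ and $J_t^{-1}$ are analytic in $t$), and $a(0)$ is real analytic by assumption. Hence the right-hand side is polynomial in $(b,\partial b)$ with coefficients real analytic in $(t,x)$, and the initial datum $b(0)=0$ is real analytic. The Cauchy--Kovalevskaya theorem then yields, near each point of $M$, a unique real analytic local solution.

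Finally I would globalise and conclude. Uniqueness of real analytic solutions forces the local solutions on overlapping charts and trivialisations to agree, so they patch to a real analytic family $a(t)$ defined on $(-\epsilon,\epsilon)\times M$ with $a\vert_{\{0\}\times M} = a(0)$; here compactness of $M$ (which holds for the principal orbits in our setting) provides a single $\epsilon>0$ valid over all of $M$. By construction $a(t)$ solves \eqref{eq:IEvolution}, and since $\Lambda_0 F_a(0)=0$ by assumption, the compatibility statement of Lemma \ref{lem:Constraint} gives $\Lambda_t F_a(t)=0$ for all $t\in(-\epsilon,\epsilon)$. That lemma then identifies $\{a(t)\}$ with a genuine $G_2$-instanton $A$ on $(-\epsilon,\epsilon)\times M$ restricting to $a(0)$ at $t=0$, as required.

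The main obstacle is not analytic but a matter of bookkeeping: one must pass from the intrinsic, bundle-valued and gauge-covariant equation to the scalar, chart-local normal form to which Cauchy--Kovalevskaya literally applies, and then reassemble the pieces. The substitution $b = a - a(0)$ and the appeal to uniqueness for the gluing are what make this clean, while compactness of $M$ is what upgrades the pointwise existence times to a uniform $\epsilon$.
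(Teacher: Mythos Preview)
Your proof is correct and follows exactly the approach of the paper, which simply states that the result is immediate from applying the Cauchy--Kovalevskaya theorem to \eqref{eq:IEvolution}. You have supplied the details the paper omits (normal form via invertibility of $J_t$, local application and gluing, uniform $\epsilon$ from compactness of $M$, and propagation of the constraint via Lemma~\ref{lem:Constraint}), but the strategy is identical.
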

\begin{proof}
This is immediate from applying the Cauchy-Kovalevskaya theorem to \eqref{eq:IEvolution}.
\end{proof}

\begin{remark}
We can similarly derive evolution equations defining $G_2$-monopoles, i.e.~pairs $(A, \Phi)$ where $A$ is a connection on $P$ and $\Phi$ is a section of the adjoint bundle satisfying
$$\ast \nabla_A \Phi = F_A \wedge \psi.$$
In this setting we can write $A= a(t)$ in temporal gauge as before and $\Phi=\phi(t) \in \Omega^0(I_t , \Omega^0(M, \mathfrak{g}_P))$ as a $1$-parameter family of Higgs fields over $M$. Then, the family $(a(t),\phi(t))$ of connections and Higgs fields on $M$ gives rise to a $G_2$-monopole if and only if they satisfy:
\begin{eqnarray}
J_t \dot{a} & = & -d_a \phi - \ast_t \left( F_a \wedge \Omega_2 \right), \nonumber \\ 
\dot{\phi} & = & \Lambda_t F_a. \nonumber
\end{eqnarray}
We do not pursue the analysis of these equations here, particularly since at least in the asymptotically conical case (the Bryant--Salamon $G_2$-manifolds) one does not expect to find non-trivial finite mass monopoles which are not instantons in our setting (see Theorem 2 in \cite{Oliveira2014}).
\end{remark}

\subsection[\texorpdfstring{$SU(2)^2$-invariant $G_2$-manifolds of cohomogeneity-$1$}{SU(2)xSU(2)-invariant G2-manifolds of cohomogeneity-1}]{{\boldmath $SU(2)^2$}-invariant {\boldmath $G_2$}-manifolds of cohomogeneity-1}\label{sec:G2metrics}

In this section we shall give a self-contained exposition of all the known complete $SU(2)^2$-invariant $G_2$-holonomy metrics. We shall see that all these examples actually have $SU(2)^2 \times U(1)$-symmetry. We start with some preparation. Split the Lie algebra $\mathfrak{su}(2) \oplus \mathfrak{su}(2)$ as $\mathfrak{su}^+ \oplus \mathfrak{su}^-$, as follows.
 If $\lbrace T_i \rbrace_{i=1}^3$ is a basis for $\mathfrak{su}(2)$ such that $[T_i,T_j] = 2 \epsilon_{ijk} T_k$, 
then $T_i^+ = (T_i, T_i)$ and $T_i^-=(T_i, -T_i)$ for $i=1,2,3$ give a basis for $\mathfrak{su}^+$ and $\mathfrak{su}^-$ respectively.  (Thus $\mathfrak{su}^+$ and $\mathfrak{su}^-$ are 
diagonal and anti-diagonal copies of $\mathfrak{su}(2)$ in $\mathfrak{su}(2)\oplus\mathfrak{su}(2)$.) We shall let $\lbrace \eta_i^+ \rbrace_{i=1}^{3}$ and $\lbrace \eta_i^- \rbrace_{i=1}^{3}$ be dual bases to $\{T_i^+\}_{i=1}^3$ and $\{T_i^-\}_{i=1}^3$ respectively. The Maurer--Cartan relations in this case give
\begin{eqnarray}
d\eta_i^+ & = & -\epsilon_{ijk} \left( \eta_j^+ \wedge \eta_k^+ + \eta_j^- \wedge \eta_k^- \right), \label{eq:MC1}\\ 
d\eta_i^- & = & -2\epsilon_{ijk} \eta_j^- \wedge \eta_k^+.\label{eq:MC2}
\end{eqnarray}
The complement of the singular orbit can be written as $\mathbb{R}^+_t \times M$, where $M$ 
denotes a principal orbit, which is a finite quotient of $S^3\times S^3$. 
The $SU(2)\times SU(2)$-invariant $SU(3)$-structure on the principal orbit $\{t\}\times M$ is given by (\cite{Madsen2013})
\begin{eqnarray}
\omega & = & 4 \sum_{i=1}^3 A_iB_i \eta_i^- \wedge \eta_i^+, \label{eq:BBSU(3)structure1}\\ 
\Omega_1 & = & 8 B_1B_2B_3 \eta_{123}^- - 4 \sum_{i,j,k}  \epsilon_{ijk} A_i A_j B_k \eta_i^+ \wedge \eta_j^+ \wedge \eta_k^-, \\ \label{eq:BBSU(3)structure}
\Omega_2 & = & - 8 A_1A_2A_3 \eta_{123}^+ + 4 \sum_{i,j,k}  \epsilon_{ijk} B_i B_j A_k \eta_i^- \wedge \eta_j^- \wedge \eta_k^+,\label{eq:BBSU(3)structure2}
\end{eqnarray}
for real-valued functions $A_i$, $B_i$ of $t \in \mathbb{R}^+$, where $\eta_{123}^{\pm}$
 denotes $\eta_{1}^{\pm} \wedge \eta_{2}^{\pm} \wedge \eta_{3}^{\pm}$. For future reference, we remark that 
$$4 \sum_{i,j,k}  \epsilon_{ijk} B_i B_j A_k \eta_i^- \wedge \eta_j^- \wedge \eta_k^+= 8 B_1 B_2 A_3 \eta_1^- \wedge \eta_2^- \wedge \eta_3^+ + \text{cyclic permutations}.$$ The compatible metric determined by this $SU(3)$ structure on $\{t\}\times M$ is (\cite{Madsen2013})
\begin{equation}\label{eq:metric}
g_t = \sum_{i=1}^3 (2A_i)^2 \eta_{i}^+ \otimes \eta_i^+ + (2B_i)^2 \eta_i^- \otimes \eta_i^-,
\end{equation}
and the resulting metric on $\mathbb{R}_t \times M$, compatible with the $G_2$-structure $\varphi = dt \wedge \omega + \Omega_1$, is given by $g=dt^2 + g_t$. Recall also that this metric has holonomy in $G_2$ if and only if the $SU(3)$-structure above solves the Hitchin flow equations \eqref{eq:Hitchinflow}.\\
  These considerations allow us to derive the general ODEs describing $SU(2)^2$-invariant $G_2$-manifolds of cohomogeneity-$1$ as follows (c.f.~\cite{Madsen2013}):
\begin{eqnarray}
\dot{A}_i&=&\frac{1}{2}\left(\frac{A_i^2}{A_jA_k}-\frac{A_i^2}{B_jB_k}-\frac{A_j^2+A_k^2}{A_jA_k}+\frac{B_j^2+B_k^2}{B_jB_k}\right),\label{eq:cohom.one.G2.metric1}\\
\dot{B}_i&=&\frac{1}{2}\left(\frac{A_j^2+B_k^2}{A_jB_k}+\frac{A_k^2+B_j^2}{A_kB_j}-\frac{B_i^2}{A_jB_k}-\frac{B_i^2}{A_kB_j}\right),\label{eq:cohom.one.G2.metric2}
\end{eqnarray}  
where $(i,j,k)$ denotes a cyclic permutation of $(1,2,3)$. 
   We will be interested in this article in the setting where we have known complete examples. In fact, in every such example there is an extra $U(1)$-symmetry: this $U(1)$ acts diagonally on $S^3 \times S^3$ with infinitesimal generator $T_1^+$.  As a consequence, we have $A_2=A_3$ and $B_2=B_3$ and \eqref{eq:Hitchinflow} becomes (as in \cite{Bazaikin2013}):
\begin{eqnarray}
\dot{A}_1 &= & \frac{1}{2}\left(\frac{A_1^2}{A_2^2}-\frac{A_1^2}{B_2^2}\right), \label{eq:dotA1}\\
 \dot{A}_2 &= & \frac{1}{2}\left(\frac{B_1^2+B_2^2-A_2^2}{B_1B_2}-\frac{A_1}{A_2}\right),\label{eq:dotA2}\\
\dot{B}_1 &= & \frac{A_2^2+B_2^2-B_1^2}{A_2B_2}, \label{eq:dotB1}\\
\dot{B}_2 &= &\frac{1}{2}\left(\frac{A_2^2+B_1^2-B_2^2}{A_2B_1}+\frac{A_1}{B_2}\right).\label{eq:dotB2}
\end{eqnarray}
We now give the known examples of cohomogeneity-$1$ complete $G_2$-metrics with $SU(2)^2$-symmetry. 

\subsubsection{The Bryant--Salamon (BS) metric}\label{sss:BS}

The Bryant--Salamon metric on $\mathbb{R}^4\times S^3$ \cite{Bryant1989} is one of the first examples of a complete metric with $G_2$-holonomy. It  
is not only $SU(2)^2$-invariant, but actually $SU(2)^3$-invariant, having group diagram $I(SU(2)^3, SU(2), SU(2)^2)$; i.e.~the principal orbits are $SU(2)^3/SU(2)\cong S^3\times S^3$ and the (unique) singular orbit is  $SU(2)^3/SU(2)^2\cong S^3$.  (Here, the $SU(2)$ in $SU(2)^3$ is the subgroup $SU(2)_3 = 1 \times 1 \times SU(2)$, and $SU(2)^2 \subset SU(2)^3$ is the subgroup $ SU(2)_3 \times \Delta SU(2)$, where $\Delta SU(2) \subset SU(2)^2 \times 1$ is the diagonal.) In terms of the $SU(2)^2$-invariant point of view above, the metric can be explicitly written as follows.\\
In this case the extra symmetry means that $A_1=A_2=A_3$ and $B_1=B_2=B_3$ and the equations \eqref{eq:dotA1}-\eqref{eq:dotB2} reduce to:
\begin{equation}\label{eq:dotAdotB}
\dot{A}_1=\frac{1}{2}\left(1-\frac{A_1^2}{B_1^2}\right)\quad\text{and}\quad
\dot{B}_1=\frac{A_1}{B_1}.
\end{equation}
Setting $B_1=s$ and $A_1=sC(s)$ we see that \eqref{eq:dotAdotB} becomes
$\frac{d}{ds}(sC)=\frac{1-C^2}{2C}$ 
which we can easily solve as 
$C(s)=\sqrt{\frac{1-c^3s^{-3}}{3}}$, 
so that, for $c>0$ and $s\geq c$,
\begin{equation}\label{eq:AB.c.BS}
A_1(s)=\frac{s}{\sqrt{3}}\sqrt{1-c^3s^{-3}}\quad\text{and}\quad 
B_1(s)=s.
\end{equation}
In particular, choosing $c=1$ and using $t$, the arc length parameter along the geodesic parametrized by $s$, we define a coordinate $r \in [1; + \infty)$ implicitly by 
\begin{equation}\label{eq:r.BS}
t(r)=\int_1^r \frac{ds}{\sqrt{1-s^{-3}}},
\end{equation} and solve \eqref{eq:dotAdotB} as follows:
\begin{equation}\label{eq:AB.BS}
A_1=A_2=A_3=\frac{r}{3}\sqrt{1-r^{-3}} \quad \text{and} \quad B_1=B_2=B_3=\frac{r}{\sqrt{3}}.
\end{equation}
It is easy to verify that the geometry at infinity is asymptotically conical to the standard holonomy $G_2$-cone on $S^3\times S^3$.  In fact, we see from \eqref{eq:AB.c.BS} that one obtains a one-parameter family\footnote{There are, in fact, distinct $SU(2)^3$-invariant torsion-free $G_2$-structures on $\mathbb{R}^4\times S^3$ inducing the same asymptotially conical Bryant--Salamon metric, determined by their image in $H^3(S^3\times S^3)$.} of solutions to \eqref{eq:dotAdotB}, equivalent up to scaling, whose limit with $c=0$ is the conical solution.  Moreover, the torsion-free $G_2$-structure has a unique compact associative submanifold which is the singular orbit $\lbrace 0 \rbrace \times S^3 \cong SU(2)^2/SU(2)$.\\ There is a one-parameter family of $SU(2)^3$-invariant $G_2$-instantons for this Bryant--Salamon torsion-free $G_2$-structure constructed by Clarke \cite{Clarke14}, where the parameter can be interpreted as how concentrated the instanton is around the associative $S^3$. We shall prove, in Theorem \ref{prop:Clarke} and Proposition \ref{prop:LocalG2Instantons}, a uniqueness result for these $G_2$-instantons in the class of $SU(2)^2 \times U(1)$-invariant ones.

\begin{remark}
In \cite{Bryant1989} Bryant--Salamon constructed $G_2$-holonomy metrics on the total spaces of the bundles of anti-self-dual $2$-forms over $\mathbb{CP}^2$ and $\mathbb{S}^4$, i.e.~$\Lambda^2_- \mathbb{CP}^2$ and $\Lambda^2_-\mathbb{S}^4$. Such metrics are also of cohomogeneity-$1$ with respect to $SO(5)$ and $SU(3)$ respectively and asymptotically conical. Instantons on these $G_2$-manifolds are also known to exist and some explicit examples can be found in \cite{Oliveira2014}.
\end{remark}

\noindent It follows from Proposition 3 in \cite{Oliveira2014} (or easily from \eqref{eq:MEvolution1}-\eqref{eq:MEvolution2}) that on an asymptotically conical $G_2$-manifold, a $G_2$-instanton whose curvature is decaying pointwise at infinity will have as a limit (if it exists) a pseudo-Hermitian--Yang--Mills connection $a_{\infty}$ (or nearly K\"ahler instanton): i.e.~if $\varphi_{\infty}=t^2dt\wedge\omega_{\infty}+t^3\Omega_{1,\infty}$ and $\psi_{\infty}=t^4\omega_{\infty}^2/2-t^3dt\wedge\Omega_{2,\infty}$ is the conical $G_2$-structure on the asymptotic cone then $F_{a_{\infty}}\wedge\omega_{\infty}^2=0$ and $F_{a_{\infty}}\wedge\Omega_{2,\infty}=0$.  

\subsubsection{The Brandhuber et al.~(BGGG) metric}\label{sss:BGGG}

On $\mathbb{R}^4 \times S^3$ there is another complete $G_2$-holonomy metric constructed by Brandhuber and collaborators in \cite{Brandhuber2001}, which is a member of a family of complete $SU(2)^2\times U(1)$-invariant, cohomogeneity-1,  $G_2$-holonomy metrics on $\mathbb{R}^4\times S^3$ found in \cite{Bogo2013}\footnote{We thank Lorenzo Foscolo and Mark Haskins for bringing the metrics in \cite{Bogo2013} to our attention.}.  To derive this example one can choose $c>0$, set $B_1=s$ and
$$A_1=c\frac{ds}{dt}=c\frac{A_2^2+B_2^2-s^2}{A_2B_2}$$
from \eqref{eq:dotB1}.  Letting $C_{\pm}=A_2^2\pm B_2^2$ the equations 
\eqref{eq:dotA2} and \eqref{eq:dotB2} yield
$$\frac{d}{ds}C_+=\frac{s^2C_+-C_-^2}{s(C_+-s^2)}\quad\text{and}\quad 
\frac{d}{ds}C_-=\frac{C_-}{s}-2c.$$
The second equation is easily integrated and so we are able to find solutions 
$$C_+(s)=\frac{3s^2-c^2}{2}\quad\text{and}\quad C_-(s)=-cs.$$
We thus obtain a one-parameter family of solutions to \eqref{eq:dotA1}-\eqref{eq:dotB2}:
\begin{gather}\label{eq:BGGG1} A_1(s)=2c\sqrt{\frac{s^2-c^2}{9s^2-c^2}},\quad A_2(s)=\frac{1}{2}\sqrt{(3s+c)(s-c)},\\
\label{eq:BGGG2}
B_1(s)=s,\quad B_2(s)=\frac{1}{2}\sqrt{(3s-c)(s+c)},
\end{gather}
defined for $s\geq c>0$. 
These solutions give holonomy $G_2$ metrics on $\mathbb{R}^4\times S^3$ by Lemma \ref{lem:ExtendingSmoothlyMetric} in Appendix \ref{app:R4xS3}.  
We can further scale the metric from $g$ to $\lambda^2 g$ and the resulting fields scale as $A_i^{\lambda}(s)=\lambda A_i (s/\lambda)$, $B_i^{\lambda}(s)=\lambda B_i(s/\lambda)$. These give the following family of solution to the ODEs \eqref{eq:dotA1}-\eqref{eq:dotB2} above:
\begin{gather}\nonumber A_1^{\lambda}(s)= 2c \lambda \sqrt{\frac{s^2-c^2\lambda^2}{9s^2-c^2\lambda^2}},\quad  A_2^{\lambda}(s)=\frac{1}{2}\sqrt{(3s+c\lambda)(s-c \lambda)},\\
\nonumber
B_1^{\lambda}(s)= s , \quad B_2^{\lambda}(s)=\frac{1}{2}\sqrt{(3s-c\lambda)(s+c\lambda)}.
\end{gather}
We see that under the scaling we have $c\mapsto c\lambda$, so we can always scale so that $c=1$.  In particular, one can set $\lambda=3/2$, $c=1$ and as in \cite{Brandhuber2001} define the coordinate $r\in [9/4,+\infty)$ implicitly by 
\begin{equation}\label{eq:r.BGGG}
t(r)=\int_{9/4}^{r} \frac{\sqrt{(s-3/4)(s+3/4)}}{\sqrt{(s-9/4)(s+9/4)}} ds
\end{equation} and find that
\begin{gather*}
A_1 =\frac{\sqrt{(r-9/4)(r+9/4)}}{\sqrt{(r-3/4)(r+3/4)}}, \quad A_2=A_3=\sqrt{\frac{(r-9/4)(r+3/4)}{3}},\\
B_1=\frac{2r}{3}, \quad B_2=B_3=\sqrt{\frac{(r-3/4)(r+9/4)}{3}}
\end{gather*}
solve \eqref{eq:dotA1}-\eqref{eq:dotB2}.  
We see in this case that the principal orbits are again $S^3\times S^3$ and the singular orbit $\lbrace 0 \rbrace \times S^3$ is associative. \\
In this setting, the geometry at infinity presents a new feature (that also exists in the BB manifolds below): there is a circle that remains of finite length at infinity. 
More precisely, the metric is asymptotic to a metric on a circle bundle over a $6$-dimensional cone with the fibres of the fibration having constant finite length. The length of this circle is  the limit of $A_1$ at infinity: for the family depending on the parameters $\lambda, c$ this is $2c\lambda /3$. One also sees that the volume of the associative $S^3$ is $B_1^{\lambda}(c\lambda^2)B_2^{\lambda}(c\lambda^2)^2 \sim (c \lambda)^3$, and so, using this family, it is impossible to vary the size of the circle while keeping the volume of the singular orbit fixed.\\  In \cite{Bogo2013}, Bogoyavlenskaya constructed a $1$-parameter family (up to scaling) of $SU(2)^2\times U(1)$-invariant, cohomogeneity-1, $G_2$-holonomy metrics on $\mathbb{R}^4\times S^3$, obtained by continuously deforming the BGGG metric. With these metrics, one can independently vary the size of the circle at infinity and the associative $S^3$, and thus, in particular, obtain the BS metric as a limit of the family.\\
The BGGG metric is the only one from \cite{Bogo2013} which is explicitly known. Choosing the scaling so that the circle at infinity has size $1$, for large $t$ we compute that $t(r) \sim r$, so
$$A_1 = 1 +O(t^{-2}), \ A_2 = \frac{t}{\sqrt{3}} +O(t^{-1}) , \ B_1 = \frac{2t}{3} + O(t^0), \ B_2 = \frac{t}{\sqrt{3}} + O(t^{-1}),$$
and thus the metric is asymptotic to
$$h=dt^2 + 4(\eta_1^+ )^2 + \frac{4t^2}{3} \left( (\eta_2^+ )^2 + ( \eta_3^+)^2 \right)+ 
\frac{16t^2}{9}(\eta_1^- )^2 +  \frac{4t^2}{3} \left( (\eta_2^- )^2 + (\eta_3^-)^2 \right).$$
This limit of the family of metrics given by \eqref{eq:BGGG1}-\eqref{eq:BGGG2} as $c\to 0$ is an $S^1$-bundle over a Calabi--Yau cone on the standard 
homogeneous Sasaki--Einstein metric on $S^2\times S^3$.  (We shall describe this Sasaki--Einstein structure explicitly in Example \ref{ex:S2S3}.)  This conical Calabi--Yau metric is also known as the conifold or 3-dimensional ordinary double point.

\subsubsection{The Bazaikin--Bogoyavlenskaya (BB) metrics}\label{sss:BB}

The Bazaikin--Bogoyavlenskaya $G_2$-manifolds $X$ \cite{Bazaikin2013} (BB manifolds for short) have group diagram $I(SU(2)^2; \mathbb{Z}_4 ; U(1))$, i.e.~the principal orbits are of the form $S^3 \times S^3 / \mathbb{Z}_4$ and the (unique) singular orbit is $ SU(2)^2 / U(1) \cong S^2 \times S^3$. In fact, $X$ is diffeomorphic to $L^4 \times S^3$, where $L \rightarrow S^2$ is the complex line bundle canonically associated with the Hopf bundle and $L^4$ denotes its fourth tensor power.\\
In \cite{Bazaikin2013} some complete torsion-free $G_2$-structures with an extra $U(1)$-symmetry, i.e.~with $A_2=A_3$ and $B_2=B_3$, are constructed. These structures give rise to a $1$-parameter family of holonomy $G_2$-metrics on $X=L^4 \times S^3$, which have $(A_1(0),A_2(0),B_1(0), B_2(0))= (  \mu, \lambda,0 , \lambda)$ for some values of $\lambda, \mu \in \mathbb{R}$ with $\lambda^2 + \mu^2 =1$.  In particular, the volume of the singular orbit $S^2 \times S^3$ is proportional to $\lambda^4 \mu =(1-\mu^2)^2 \mu$ and that of any $3$-sphere $\ast \times S^3$ is proportional to $\lambda^2 \mu = (1-\mu^2)\mu$: these 3-spheres are \emph{not} associative. At least some of these metrics are asymptotic to an $S^1$-bundle over the conifold. We further remark that if one thinks of the BS and BGGG metrics as somehow analogous to the flat and Taub-NUT metrics respectively, then the BB metrics are related to the Atiyah--Hitchin metric which is defined on the line bundle $L^4 \rightarrow S^2$.  One can make this correspondence between the holonomy $G_2$ and hyperk\"ahler metrics more precise by considering an ``adiabatic limit'' where the size of the 4-dimensional fibres tends to $0$.\\
We have some preliminary results on $G_2$-instantons on these $G_2$-manifolds and intend to investigate them further in future work.

\subsection[\texorpdfstring{Asymptotics of ALC $G_2$-manifolds and their instantons}{Asymptotics of ALC G2-manifolds and their instantons}]{Asymptotics of ALC {\boldmath $G_2$}-manifolds and their instantons}\label{ss:ALC}

We have seen in $\S$\ref{sss:BGGG}-\ref{sss:BB} examples of noncompact $G_2$-manifolds which are asymptotic at infinity to a circle bundle over a cone: such manifolds are called asymptotically locally conical (ALC).  Since we shall be studying $G_2$-instantons on the 
 ALC $G_2$-manifold $\mathbb{R}^4\times S^3$ with the BGGG metric in some detail (and the other metrics discussed in $\S$\ref{sss:BGGG}-\ref{sss:BB} in future work), 
 we present some general results on ALC $G_2$-manifolds here. 
 Specifically, we describe the induced structure on the asymptotic 
 circle bundle over a cone, and characterise the limits of 
 $G_2$-instantons with pointwise decaying curvature at infinity.

\subsubsection[\texorpdfstring{The $G_2$-structure}{The G2-structure}]{The {\boldmath $G_2$}-structure}\label{ss:ALC.structure}

A noncompact $G_2$-manifold $(X,\varphi)$ is said to be ALC if there is:
\begin{itemize}
\item a $U(1)$-bundle $\pi: \Sigma^6 \rightarrow M^5$ and a $U(1)$-invariant $G_2$-structure $\varphi_{\infty}$ on $(1,+ \infty) \times \Sigma$, whose associated metric is
$$g_{\varphi_{\infty}}=dr^2 + m^{2} \eta_{\infty}^2 + r^2 \pi^* g_5,$$
where $m \in \mathbb{R}^+$, $\eta_{\infty}$ is a connection on $\Sigma$ and $g_5$ a metric on $M$;
\item a compact set $K \subset X$ and (up to a double cover)\footnote{The possible need for the double cover is because $X$ may only be asymptotic to an $S^1$-bundle, but we can get a principal bundle by taking a double cover.} 
a diffeomorphism $p: (1,+ \infty)_r \times \Sigma \rightarrow X \backslash K$,
\end{itemize}
such that if $\nabla$ denotes the Levi-Civita connection of $g_{\varphi_{\infty}}$ then
\begin{equation}\label{eq:ALC.decay}
\vert \nabla^j ( \varphi_{\infty} - p^* \varphi \vert_{X \backslash K} 
)\vert_{g_{\varphi_{\infty}}} = O(r^{\nu-j}) \quad\text{ as }r\to+\infty,
\end{equation}
for some $\nu<0$ and $j=0,1$.  (We could make the stronger assumption that \eqref{eq:ALC.decay} holds for all $j\in\mathbb{N}$, but this is unnecessary for our purposes, and should in any case surely follow  from suitable weighted elliptic estimates given Proposition \ref{prop:SU(2)StructureLimit}.)\\ 
 Our next result describes the structure on $(1,+\infty)\times\Sigma$ 
 induced from the torsion-free $G_2$-structure $\varphi$ on $X$ and limits the range of rates $\nu$ to consider.

\begin{proposition}\label{prop:SU(2)StructureLimit}
Let $(X,\varphi)$ be an ALC $G_2$-manifold and use the notation above.
\begin{itemize}
\item[(a)]  If $\nu<0$, 
the metric $g_5$ is induced by a Sasaki--Einstein $SU(2)$-structure on $M$ given by $(\alpha, \omega_1,\omega_2,\omega_3)$ satisfying 
\end{itemize}

\vspace{-24pt}

\begin{equation}\label{eq:su2structure}
d\alpha=-2\omega_1, \quad d\omega_2 = 3\alpha \wedge \omega_3, \quad d \omega_3 = -3 \alpha \wedge \omega_2.
\end{equation}
\begin{itemize}
\item[] Hence, the cone metric $dr^2+r^2g_5$ on $(1,+\infty)_r\times M$ is Calabi--Yau.
\item[(b)] If $\nu<-1$, then $d\eta_{\infty}=0$, and thus the connection is flat.
\end{itemize}
\end{proposition}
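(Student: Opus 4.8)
The plan is to express the model $G_2$-structure $\varphi_\infty$ and its dual $\psi_\infty$ in terms of an $SU(2)$-structure on $M$, differentiate, and compare the resulting powers of $r$ against the decay hypothesis \eqref{eq:ALC.decay}. First I would set up the algebraic form of $\varphi_\infty$. Contracting the $G_2$-structure with the normalized generator of the $U(1)$-action gives, by the pointwise reduction $G_2\supset SU(3)$, an $SU(3)$-structure $(\omega_C,\Omega_C=\Omega_{1,C}+i\Omega_{2,C})$ on the $6$-dimensional cone $C(M)=(1,+\infty)_r\times M$ inducing $dr^2+r^2\pi^*g_5$, in terms of which
\[
\varphi_\infty = m\,\eta_\infty\wedge\omega_C + \Omega_{1,C}, \qquad \psi_\infty = \tfrac12\,\omega_C\wedge\omega_C - m\,\eta_\infty\wedge\Omega_{2,C},
\]
exactly as in \eqref{eq:G2str} with $m\eta_\infty$ playing the role of $dt$. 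A further reduction along the unit radial field $\partial_r$ writes $(\omega_C,\Omega_C)$ through an $SU(2)$-structure $(\alpha,\omega_1,\omega_2,\omega_3)$ on $M$, and the conical form of the metric fixes the homogeneous weights
\[
\omega_C = r\,dr\wedge\alpha + r^2\omega_1, \qquad \Omega_C = r^2(dr + i\,r\alpha)\wedge(\omega_2 + i\omega_3),
\]
that is $\Omega_{1,C} = r^2\,dr\wedge\omega_2 - r^3\alpha\wedge\omega_3$ and $\Omega_{2,C} = r^2\,dr\wedge\omega_3 + r^3\alpha\wedge\omega_2$ (up to the orientation and sign conventions needed to match \eqref{eq:su2structure}).

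For the mechanism and part (a): since $\varphi$ is torsion-free, $d(p^*\varphi)=0=d(p^*\psi)$, so $d\varphi_\infty=d(\varphi_\infty-p^*\varphi)$ and likewise for $\psi_\infty$; combining \eqref{eq:ALC.decay} at $j=1$ with $|d\beta|\lesssim|\nabla\beta|$ yields $|d\varphi_\infty|_{g_{\varphi_\infty}}=O(r^{\nu-1})$ and $|d\psi_\infty|_{g_{\varphi_\infty}}=O(r^{\nu-1})$. On the other hand, differentiating the displayed forms and writing $d\eta_\infty=\pi^*G$ gives
\[
d\varphi_\infty = m\,G\wedge\omega_C - m\,\eta_\infty\wedge d\omega_C + d\Omega_{1,C}, \qquad d\psi_\infty = \omega_C\wedge d\omega_C - m\,G\wedge\Omega_{2,C} + m\,\eta_\infty\wedge d\Omega_{2,C}.
\]
Now the bookkeeping: in the coframe $\{dr,\,m\eta_\infty,\,r\,e^a\}$, with $\{e^a\}$ a $g_5$-orthonormal coframe of $M$, a $p$-form pulled back from $M$ with bounded $g_5$-coefficients has pointwise $g_{\varphi_\infty}$-norm $O(r^{-p})$. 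Reading this off term by term shows $d\omega_C$, $d\Omega_{1,C}$, $d\Omega_{2,C}$ are each $O(r^{-1})$, while $G\wedge\omega_C$ and $G\wedge\Omega_{2,C}$ are only $O(r^{-2})$. As $\nu<0$ forces $\nu-1<-1$, the $O(r^{-1})$ contributions must vanish. Since $\eta_\infty\wedge(\cdot)$ is injective on forms pulled back from $M$, and since terms carrying $dr$, terms carrying $\eta_\infty$, and purely horizontal terms of distinct degree are linearly independent and cannot cancel, each structural coefficient must vanish separately: the $O(r^{-1})$ part of $d\omega_C$ gives $d\alpha=-2\omega_1$ and $d\omega_1=0$, that of $d\Omega_{1,C}$ gives $d\omega_2=3\alpha\wedge\omega_3$, and that of $d\Omega_{2,C}$ gives $d\omega_3=-3\alpha\wedge\omega_2$. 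These are \eqref{eq:su2structure}, and they are precisely the conditions $d\omega_C=0=d\Omega_C$ making $(\omega_C,\Omega_C)$ torsion-free, i.e.\ the cone Calabi--Yau; the $O(r^{-2})$ terms in $G$ are subleading here and impose no constraint when $\nu\in(-1,0)$. This proves (a).

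For part (b), assume $\nu<-1$, so (a) applies and $d\omega_C=d\Omega_C=0$; hence $d\varphi_\infty=m\,G\wedge\omega_C=O(r^{-2})$. Because $\nu-1<-2$, the leading $O(r^{-2})$ part must now vanish. Splitting $G\wedge\omega_C = r\,dr\wedge(G\wedge\alpha) + r^2\,G\wedge\omega_1$ into its $dr$-part and its horizontal part forces $G\wedge\alpha=0$ and $G\wedge\omega_1=0$. Writing $G=\alpha\wedge\gamma+G_0$ with $\gamma\in H^*$ and $G_0\in\Lambda^2H^*$ on the $4$-dimensional contact distribution $H$, the first relation gives $G_0=0$, and then $G\wedge\omega_1=\alpha\wedge\gamma\wedge\omega_1=0$ gives $\gamma\wedge\omega_1=0$; since wedging with the transverse symplectic form $\omega_1$ is an isomorphism $\Lambda^1H^*\to\Lambda^3H^*$ (hard Lefschetz on the $4$-dimensional $H$), $\gamma=0$. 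Thus $d\eta_\infty=G=0$ and the connection is flat, proving (b).

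I expect the main obstacle to be the rate bookkeeping in the middle step: correctly attaching to each algebraic monomial (in $dr,\eta_\infty,\alpha,\omega_i,d\alpha,d\omega_i,G$ and powers of $r$) its exact $g_{\varphi_\infty}$-weight, and then arguing that monomials of distinct type and weight are genuinely independent, so that each leading coefficient must vanish on its own rather than through some conspiracy of cancellations. A secondary point requiring care is justifying the model form in the first paragraph, namely that $\varphi_\infty$ really is built from an (asymptotically $r$-independent) $SU(2)$-structure on $M$ with the stated conical weights; I would obtain this from the two successive unit-vector reductions $G_2\to SU(3)\to SU(2)$ together with the fact that the associated metric is exactly the circle bundle over the Riemannian cone.
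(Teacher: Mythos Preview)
Your approach is essentially the same as the paper's. Both arguments write $\varphi_\infty=m\,\eta_\infty\wedge\omega_C+\Omega_{1,C}$ via the reduction along the circle direction, pass to an $SU(2)$-structure on $M$ with the conical weights, use $d\varphi=d\psi=0$ together with \eqref{eq:ALC.decay} to get $|d\varphi_\infty|,|d\psi_\infty|=O(r^{\nu-1})$, and then read off \eqref{eq:su2structure} by killing the $O(r^{-1})$ terms; your treatment of (b) via $G\wedge\alpha=0$ and $G\wedge\omega_1=0$ followed by the Lefschetz argument on the contact distribution is exactly the paper's two-line conclusion. The only organisational difference is that the paper isolates $d(\iota_V\varphi_\infty)=O(r^{\nu-1})$ via Cartan's formula to argue that $\iota_V\varphi_\infty$ and the other pieces are asymptotic to \emph{homogeneous} forms on the cone, whereas you obtain the product form purely algebraically and flag the (asymptotic) $r$-independence of the reduced $SU(2)$-structure as the ``secondary point requiring care''; this is precisely the step the paper's Cartan-formula paragraph is addressing, so you have correctly identified where the work lies.
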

\begin{proof}
(a) \ We first see that, if $g$ is the metric induced by $\varphi$ on $X$, then \eqref{eq:ALC.decay} implies, in the norm defined by $g_{\varphi_\infty}$:
$$|\nabla^j(g_{\varphi_\infty}-p^*g|_{X\setminus K})|=O(r^{\nu-j})\quad\text{ as }r\to+\infty,$$
for $j=0,1$.
Therefore, if we let $\psi_{\infty}$ and $\psi$ be the Hodge duals of $\varphi_{\infty}$ and $\varphi$ respectively, then we also have that
\begin{equation}\label{eq:ALC.decay.psi}
|\nabla^j(\psi_{\infty}-p^*\psi |_{X\setminus K})|=O(r^{\nu-j})\quad\text{ as }r\to+\infty.
\end{equation}
Let $V$ be the infinitesimal generator of the $U(1)$-action on $\Sigma$. The hypothesis that $\varphi_{\infty}$ is $U(1)$-invariant implies that $L_{V} \varphi_{\infty}=0$.  Given $d\varphi=0$ and \eqref{eq:ALC.decay}, we see that 
$d\varphi_{\infty}=O(r^{\nu-1})$ (i.e.~it equals a form whose pointwise norm with respect to $g_{\varphi_{\infty}}$ decays with order  $O(r^{\nu-1})$). 
We deduce from Cartan's formula that
\begin{equation}\label{eq:ALC.Vphi}
d(\iota_V \varphi_{\infty})= L_V \varphi_{\infty} - \iota_V d \varphi_{\infty} = O(r^{\nu-1}),
\end{equation}
and so we can write $\iota_V \varphi_{\infty}= \omega + O(r^{\nu})$, where $\omega$ is the pullback of a homogeneous $2$-form of order $O(1)$ on $(1,+\infty)\times M$.  Since $\psi_{\infty}$ is $U(1)$-invariant and satisfies \eqref{eq:ALC.decay.psi}, we can similarly deduce that 
$$\iota_V \varphi_{\infty}, \quad \varphi_{\infty}- \eta_{\infty} \wedge \iota_V \varphi, \quad \iota_V \psi_{\infty},\quad \psi_{\infty} - \eta_{\infty} \wedge \iota_V \psi_{\infty},$$
are all asymptotically $V$-basic, i.e.~are asymptotic to the pullback of homogeneous forms on $(1,+\infty) \times M$ of order $O(1)$. We may therefore write
\begin{eqnarray}\label{eq:ALC.phi}
\varphi_{\infty} & = & m \eta_{\infty} \wedge \omega + \Omega_1 + O(r^{\nu}), \\
\psi_{\infty} & = & \frac{1}{2} \omega^2 -m \eta_{\infty} \wedge \Omega_2  + O(r^{\nu}) ,\label{eq:ALC.psi}
\end{eqnarray}
where (omitting the pullbacks) $(\omega, \Omega_2)$ is an $SU(3)$-structure $(1,+\infty) \times M$ (so $\Omega_1=J\Omega_2$ where $J$ is the almost complex structure determined by $\Omega_2$).\\
From \eqref{eq:ALC.Vphi} and \eqref{eq:ALC.phi} we see that
$d\omega=O(r^{\nu-1})$ 
where $\nu-1<-1$, but $d\omega$ is homogeneous of order $O(r^{-1})$ so we must have $d\omega=0$.  A similar argument using $d(\iota_V\psi_{\infty})=O(r^{\nu-1})$ and \eqref{eq:ALC.psi} gives that $d\Omega_2=0$.  
As this $SU(3)$-structure must be compatible with the metric $dr^2 + r^2 g_5$ we conclude that $M^5$ is equipped with an $SU(2)$-structure $(\alpha, \omega_1, \omega_2, \omega_3)$ such that
\begin{eqnarray}\label{eq:su2.omega}
\omega & = & -rdr \wedge \alpha + r^2 \omega_1, \\ \label{eq:su2.Omega1}
\Omega_1 & = & r^3 \alpha \wedge \omega_2 - r^2 dr \wedge \omega_3, \\ \label{eq:su2.Omega2}
\Omega_2 & = & r^2 dr \wedge \omega_2 + r^3 \alpha \wedge \omega_3.
\end{eqnarray}
Recall that $d\varphi_{\infty}=O(r^{\nu-1})$ and we have shown that $d\omega=0$.  Thus, taking the derivative of \eqref{eq:ALC.phi} gives:
$$m d \eta_{\infty} \wedge \omega + d \Omega_1=O(r^{\nu-1}).$$
We know that $d\eta_{\infty}$ is $V$-basic and homogeneous of order $O(r^{-2})$ and $\omega$ is of order $O(1)$, so we deduce that 
$d\Omega_1=O(r^{\text{max}\{\nu-1,-2\}})$.  However, $\nu-1<-1$ and 
$d\Omega_1$ is homogeneous of order $O(r^{-1})$ so we must have $d\Omega_1=0$.\\
Summarising, we have that
$d\omega=0$,  $d\Omega_1=0$ and $d\Omega_2=0$, 
which is to say that the cone $(1,+\infty)\times M$ is Calabi--Yau.  
Moreover, the conditions on $(\alpha,\omega_1,\omega_2,\omega_3)$  
 now follow immediately from \eqref{eq:su2.omega}-\eqref{eq:su2.Omega2}.\\
(b) \ From \eqref{eq:ALC.phi} and $d\omega=d\Omega_1=0$ we know that 
$d\eta_{\infty}\wedge\omega=O(r^{\nu-1})$. Since $d\eta_{\infty}$ is homogeneous of order $O(r^{-2})$ and $\nu-1<-2$ we deduce that $d\eta_{\infty}\wedge\omega=0$.  In terms of the $SU(2)$-structure, by \eqref{eq:su2.omega}, we have that 
$$d\eta_{\infty}\wedge\alpha=0\quad\text{and}\quad d\eta_{\infty}\wedge\omega_1=0.$$
The first equation gives $d\eta_{\infty}=\alpha\wedge\beta$ for some $V$-semibasic 1-form $\beta$, but then the second equation forces $\beta=0$.  
\end{proof}

\noindent Now we know from Proposition \ref{prop:SU(2)StructureLimit} that the asymptotic cone for an ALC $G_2$-manifold is Calabi--Yau, we can impose a further condition on the connection $\eta_{\infty}$ for the definition of an ALC $G_2$-manifold: namely, that $\eta_{\infty}$ is Hermitian--Yang--Mills, i.e.~$d\eta_{\infty}\wedge\omega^2=0$ and $d\eta_{\infty}\wedge\Omega_2=0$.\\
We now give the example of the standard Sasaki-Einstein structure on $S^2 \times S^3$ in terms of the framework above. This is (up to scaling) the limiting structure appearing for the metrics in $\S$\ref{sss:BGGG}-\ref{sss:BB}, and so is the most important for our study.  

\begin{example}\label{ex:S2S3}
Let $S^2 \times S^3= SU(2)^2/ \Delta U(1)$ and let $\lbrace \eta_i^+, \eta_i^- \rbrace_{i=1}^3$ be as in $\S$\ref{sec:G2metrics}.
We can equip $S^3 \times S^3 \rightarrow S^2 \times S^3$ with a connection such that $\eta_2^+,\eta_3^+,\eta_1^-,\eta_2^-,\eta_3^-$ is a horizontal coframing. We define:
\begin{gather*}
\eta_{\infty}=2\eta_1^+,\quad \alpha  =  -\frac{4}{3} \eta_1^-, \quad \omega_1  =  \frac{4}{3} \left( \eta_2^+ \wedge \eta_3^- + \eta_2^- \wedge \eta_3^+ \right),  \\
 \quad
 \omega_2  =  \frac{4}{3} \left( \eta_{2}^+\wedge\eta_{3}^+ - \eta_{2}^-\wedge\eta_{3}^- \right),\quad
\omega_3  =  \frac{4}{3} \left( \eta_2^+ \wedge \eta_2^- + \eta_3^+ \wedge \eta_3^- \right).
\end{gather*}
The forms $\alpha,\omega_1,\omega_2,\omega_3$ are basic for the $\Delta U(1)$-action and equip $S^2\times S^3$ with an $SU(2)$-structure.  We can check that \eqref{eq:su2structure} holds 
 and so this is the standard homogeneous Sasaki-Einstein structure on $S^2 \times S^3$. 
We also see that $\eta_{\infty}$ is a connection form on $S^3\times S^3$ such that 
$$d \eta_{\infty} = -4 \left( \eta_{2}^+\wedge\eta_3^+ + \eta_{2}^-\wedge\eta_{3}^- \right)$$
is basic anti-self-dual: i.e.~$d \eta_{\infty} \wedge \omega_i=0$  for $i=1,2,3$.  This implies that $\eta_{\infty}$ is Hermitian--Yang--Mills  
by \eqref{eq:su2.omega} and \eqref{eq:su2.Omega2}.
\end{example}

\noindent From Example \ref{ex:S2S3} and $\S$\ref{sss:BGGG}  we see that the BGGG $\mathbb{R}^4\times S^3$ is ALC with rate $\nu=-1$ (and $m=1$), and the asymptotic structure is that given by Example \ref{ex:S2S3}.  This is in particular shows that 
Proposition \ref{prop:SU(2)StructureLimit}(b) is sharp.

\subsubsection[\texorpdfstring{$G_2$-instantons}{G2-instantons}]{{\boldmath $G_2$}-instantons}

We now study the asymptotic behaviour of $G_2$-instantons on ALC $G_2$-manifolds, and begin by examining the $G_2$-instanton condition on the asymptotic $U(1)$-bundle over a Calabi--Yau cone.  We shall use the notation of the previous subsection.\\
Let $\pi: (1,+\infty)_r \times \Sigma \rightarrow (1,+ \infty)_r \times M$ be a $U(1)$-bundle over a Calabi--Yau cone, equipped with the $G_2$-structure
$$\varphi_{\infty} = m \eta_{\infty} \wedge \omega + \Omega_1,$$
as above. Let $P$ be the pullback to $(1,+\infty)\times \Sigma$ of a bundle over $M$. If $A_{\infty}$ is a connection on $P$ (not necessarily pulled back from a connection over $\Sigma$), then without loss of generality we can write it in temporal gauge.  Denote by $a'$ be the pullback of a connection on $P \rightarrow (1,+\infty) \times M$ to $\pi^* P \rightarrow (1,+\infty) \times \Sigma$.  Then, we can write $A_{\infty}= a' + (A_{\infty}-a')$, where $A_{\infty}-a' \in \Omega^1((1,+\infty)\times \Sigma, \mathfrak{g}_P)$.  Hence, $A_{\infty}-a'=a''+ m\Phi \otimes \eta_{\infty}$, for some unique $\Phi\in \Omega^0((1,+\infty)\times \Sigma, \mathfrak{g}_P)$ and $\pi$-semibasic $a'' \in \Omega^1((1,+\infty)\times \Sigma, \mathfrak{g}_P)$, i.e.~$\iota_V a''=0$. We can thus define the connection $a=a'+a''$ and write 
\begin{equation}\label{eq:ALC.limit.A}
A_{\infty}=a+m\Phi \otimes \eta_{\infty}.
\end{equation} The curvature of this connection is
\begin{eqnarray}\nonumber
F_{A_{\infty}} & = & F_a+d_a (m\Phi \otimes \eta_{\infty}) \\ \nonumber
& = & F_a + m\Phi \otimes d \eta_{\infty} +m d_a \Phi \wedge \eta_{\infty} \\ \nonumber
& = & F_a^B +m \Phi \otimes d \eta_{\infty} + (m d_a \Phi - \iota_V F_a ) \wedge \eta_{\infty},
\end{eqnarray}
where $F_a^B= F_a - \eta_{\infty} \wedge \iota_V F_a$ denotes the semibasic component of $F_a$. Now, using \eqref{eq:ALC.psi} and $d \eta_{\infty} \wedge \omega^2=0$ and $d \eta_{\infty} \wedge \Omega_2=0$ (the Hermitian--Yang--Mills condition), the $G_2$-instanton equation $F_{A_{\infty}} \wedge \psi_{\infty}=0$ turns into
\begin{eqnarray}\nonumber
0 & = & \left( F_a^B + m\Phi \otimes d \eta_{\infty} + (md_a \Phi - \iota_V F_a ) \wedge \eta_{\infty} \right) \wedge \left(-m \eta_{\infty} \wedge \Omega_2 + \frac{1}{2} \omega^2\right) \\ \nonumber
& = & \left( m (F_a^B + m\Phi \otimes d \eta_{\infty} ) \wedge \Omega_2 + \frac{1}{2}(m d_a \Phi - \iota_V F_a ) \wedge \omega^2 \right) \wedge \eta_{\infty} \\ \nonumber
& & + \frac{1}{2}(F_a^B + m\Phi \otimes d \eta_{\infty} ) \wedge \omega^2 \\ \nonumber
& = & \left( m F_a^B \wedge \Omega_2 + \frac{1}{2}(m d_a \Phi - \iota_V F_a ) \wedge \omega^2 \right) \wedge \eta_{\infty}  + \frac{1}{2}F_a^B \wedge \omega^2.
\end{eqnarray}
We deduce that
\begin{equation}\label{eq:PreMonopole}
 m F_a^B \wedge \Omega_2 + \frac{1}{2}(md_a \Phi - \iota_V F_a ) \wedge \omega^2 =0\quad\text{and}\quad  F_a^B \wedge \omega^2 =0 .
\end{equation}
In our case we will be investigating $G_2$-instantons that are invariant under the $U(1)$-action generated by $V$; that is, we take a lift of the $U(1)$-action to the total space and the connection $A$ is invariant under the lifted action.\\  Take a local trivialization of $P$ and local coordinates $(x_i, \theta)$ such that $V= \partial_{\theta}$ and the $\partial_{x_i}$ are $\eta_{\infty}$-horizontal.  With these choices, the connection is given by $$A_{\infty}= a_i \otimes dx_i + m\Phi \otimes d \theta,$$ so $a=a_i\otimes dx_i$ and thus 
\begin{equation}\label{eq:iVFa}
\iota_VF_a=\frac{\partial a_i}{\partial\theta} dx_i.
\end{equation}
The $U(1)$-invariance of $A_{\infty}$ forces $\frac{\partial a_{i}}{\partial \theta}=0$.  
Notice that if we pick a different trivialization, then as the bundle is pulled back from $M$ the transition function $g$ to this new trivialization is independent of $\theta$, i.e.~$\frac{\partial g}{\partial \theta}=0$.  The $a_i$ will become $\tilde{a}_i=g^{-1} a_i g + g^{-1} \frac{\partial g}{\partial x_i}$, so we see that $\frac{\partial \tilde{a}_i}{\partial \theta} = 0$ as well.  Thus, the $\theta$-independence of the $a_i$ is independent of the choice of trivialisation.\\ This means that $\iota_V F_a =0$ by \eqref{eq:iVFa} and so $F_a^B=F_a$.  Hence, \eqref{eq:PreMonopole}  becomes
\begin{equation}\label{eq:PreMonopole2}
 F_a \wedge \Omega_2 = - \frac{1}{2 }d_a \Phi  \wedge \omega^2,
 \quad  F_a \wedge \omega^2 =0 ,
\end{equation}
which are the equations for a Calabi--Yau monopole $(a, \Phi)$ on $(1,+\infty) \times M$ equipped with the conical torsion-free $SU(3)$-structure $(\omega, \Omega_2)$.\\
These observations lead to the following.

\begin{proposition}\label{prop:ALC.instant}
Let $A$ be a $G_2$-instanton on an ALC $G_2$-manifold $(X,\varphi)$ and use the notation from the start of $\S$\ref{ss:ALC.structure}. Suppose there exists a $U(1)$-invariant connection $A_{\infty}=a+m\Phi\otimes\eta_{\infty}$ as in \eqref{eq:ALC.limit.A} such that $p^*F_A|_{X\setminus K}$ is asymptotic at infinity to $F_{A_{\infty}}$.
  Then $(a,\Phi)$ is a Calabi--Yau monopole on the Calabi--Yau cone $(1,+\infty)\times M$.
\end{proposition}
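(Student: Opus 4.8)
The plan is to reduce the statement of Proposition~\ref{prop:ALC.instant} to the computation already carried out in the preceding paragraphs, where the $G_2$-instanton equation on the asymptotic $U(1)$-bundle over a Calabi--Yau cone was shown to be equivalent to the Calabi--Yau monopole equations~\eqref{eq:PreMonopole2}. First I would fix the notation from the start of $\S$\ref{ss:ALC.structure}: using the diffeomorphism $p:(1,+\infty)_r\times\Sigma\to X\setminus K$, we transport the restriction $F_A|_{X\setminus K}$ to the model end $(1,+\infty)\times\Sigma$ and use the hypothesis that $p^*F_A|_{X\setminus K}$ is asymptotic to $F_{A_\infty}$, where $A_\infty=a+m\Phi\otimes\eta_\infty$ is the given $U(1)$-invariant connection. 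By Proposition~\ref{prop:SU(2)StructureLimit}(a) the asymptotic cone $(1,+\infty)\times M$ is Calabi--Yau, and after imposing (as is now part of our definition of ALC) that $\eta_\infty$ is Hermitian--Yang--Mills, the computation producing~\eqref{eq:PreMonopole} applies verbatim to $A_\infty$.

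Next I would exploit the $U(1)$-invariance hypothesis. As shown in the trivialisation argument culminating in~\eqref{eq:iVFa}, the invariance of $A_\infty$ under the flow of $V=\partial_\theta$ forces $\iota_V F_a=0$, independently of the choice of (necessarily $\theta$-independent) local trivialisation of the pulled-back bundle. Hence $F_a^B=F_a$, and the pair of equations~\eqref{eq:PreMonopole} collapses to~\eqref{eq:PreMonopole2}, which are exactly the Calabi--Yau monopole equations for $(a,\Phi)$ on $(1,+\infty)\times M$ with its conical torsion-free $SU(3)$-structure $(\omega,\Omega_2)$. This is the content that needs to be asserted, so the bulk of the proof is simply pointing to the displayed derivation.

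The one genuine point requiring care is that the $G_2$-instanton equation $F_{A_\infty}\wedge\psi_\infty=0$ should hold for the \emph{limiting} connection $A_\infty$, whereas the hypothesis only provides that $A_\infty$ is the asymptotic model of the honest $G_2$-instanton $A$. I would argue that since $A$ solves $F_A\wedge\psi=0$ on $X\setminus K$ and $p^*F_A$ is asymptotic to $F_{A_\infty}$ while $p^*\psi$ is asymptotic to $\psi_\infty$ by~\eqref{eq:ALC.decay.psi}, the defect $F_{A_\infty}\wedge\psi_\infty$ has pointwise norm decaying to $0$ as $r\to+\infty$; but this defect is a homogeneous expression in the conical/monopole data with a fixed (non-negative) homogeneity, so it must vanish identically. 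In effect, the Calabi--Yau monopole equations are the scale-invariant ``leading-order'' part of the $G_2$-instanton equation, and a term of fixed homogeneity that tends to zero is zero. This homogeneity-matching step---verifying that $F_{A_\infty}\wedge\psi_\infty$ really is $V$-basic and scale-homogeneous of the appropriate order, so that asymptotic vanishing implies exact vanishing---is the main obstacle, and is the analogue of the order-counting arguments already used repeatedly in the proof of Proposition~\ref{prop:SU(2)StructureLimit}.

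Finally I would conclude by recording that~\eqref{eq:PreMonopole2} is precisely the definition of a Calabi--Yau monopole on the Calabi--Yau cone, so $(a,\Phi)$ is such a monopole, completing the proof.
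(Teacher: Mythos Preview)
Your approach is essentially the same as the paper's: both argue that $F_{A_\infty}\wedge\psi_\infty$ is the leading-order part of $p^*(F_A\wedge\psi)=0$ and hence vanishes, then invoke the preceding derivation of~\eqref{eq:PreMonopole2} via $U(1)$-invariance. You are more explicit than the paper about the order-matching step (the paper simply asserts that ``the leading order term \ldots\ must vanish''), but the strategy and level of rigor are identical.
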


\begin{proof}
Since $A$ is a $G_2$-instanton we have $F_A\wedge\psi=0$ and we have that $\psi$ is asymptotic to $\psi_{\infty}$ by \eqref{eq:ALC.decay.psi}.  By hypothesis, the leading order term in $p^*(F_A\wedge\psi)$ is $F_{A_{\infty}}\wedge\psi_{\infty}$, so this must vanish.  The discussion before the statement means that \eqref{eq:PreMonopole2} holds, so the result follows.
\end{proof}

\noindent Proposition \ref{prop:ALC.instant} shows that the natural limits of $G_2$-instantons on ALC $G_2$-manifolds (if they exist) are Calabi--Yau monopoles on Calabi--Yau cones. 

\begin{remark}
These observations further motivate the study of Calabi--Yau monopoles on  cones, particularly the conifold.  See \cite{Oli16} for some examples and results on Calabi--Yau monopoles in the asymptotically conical and conical settings. 
\end{remark}

\begin{remark}
We already noted that $G_2$-instantons on ALC manifolds are the $G_2$-analogue of anti-self-dual connections on ALF gravitational instantons. However, in the $G_2$ case assuming an $L^2$ bound on the curvature is too restrictive.  Instead one may suppose either some $L^p$ bound for $p >2$, or simply that the pointwise norm of the curvature decays. In this paper we shall use the latter assumption when we need a curvature bound. However, it would be interesting to use the techniques in chapters 3 and 4 of \cite{Cherkis2016} in order to better understand the asymptotic form of $G_2$-instantons with some $L^p$ bound on their curvature.
\end{remark}

\subsection{Homogeneous bundles and invariant fields}\label{ss:hombdles}

We will now classify invariant connections on bundles over the $SU(2)^2$-principal orbits in the $G_2$-manifolds $X$ of $\S$\ref{sec:G2metrics} so $X\cong\mathbb{R}^4\times S^3$ or $L^4\times S^3$.\\ 
We start with a review of the general setup on a homogeneous manifold $K/H$. First, $K$-homogeneous $G$-bundles over $K/H$ (which will be our principal orbits) are determined by their isotropy homomorphism. These are group homomorphisms $\lambda: H \rightarrow G$, associated with which we construct the bundle $P_{\lambda}=K \times_{(H,\lambda)} G$. The reductive splitting $\mathfrak{k} = \mathfrak{h} \oplus \mathfrak{m}$ equips $K \rightarrow K/H$ with a connection whose horizontal space is $\mathfrak{m}$. This is the so-called canonical invariant connection and its connection form $A^c_{\lambda} \in \Omega^1(K, \mathfrak{g})$ is the left-invariant translation of $d \lambda \oplus 0 : \mathfrak{h} \oplus \mathfrak{m} \rightarrow \mathfrak{g}$. Other invariant connections are classified by Wang's theorem \cite{Wang1958} and are in correspondence with morphisms of $H$-representations $\Lambda : (\mathfrak{m} , \Ad ) \rightarrow (\mathfrak{g} , \Ad \circ \lambda)$.\\
In the cases we shall consider, $SU(2)^2$ acts with cohomogeneity-$1$ and the principal orbits are of the form $M=S^3 \times S^3 / H$, where $H$ will only be nontrivial in the BB case where it is $\mathbb{Z}^4$. Isomorphism classes of homogeneous $G$-bundles on these principal orbits are in correspondence with (conjugacy classes) of isotropy homomorphisms, i.e.~group homomorphisms $\lambda: H \rightarrow G$. Therefore $\lambda$ will be the trivial homomorphism, except in the BB case where the possible $\lambda$'s are in one-to-one correspondence with cyclic subgroups of $G$ of order $1$, $2$ or $4$. Given such $\lambda$ determines the $SU(2)^2$-homogeneous $G$-bundle
$$P_\lambda=  SU(2)^2  \times_{(H,\lambda)} G.$$
The canonical invariant connection $a_c$ 
is the trivial one (given the choice of $H$), hence its connection $1$-form as an element of $\Omega^1(SU(2)^2, \mathfrak{g})$ vanishes. It follows from Wang's theorem \cite{Wang1958}, that any other invariant connection differs from $a_c$ by a morphism  of $H$-representations 
$$\Lambda : ( \mathfrak{su}^+ \oplus \mathfrak{su}^- , \Ad)  \rightarrow ( \mathfrak{g} , \Ad \circ \lambda ).$$
When $H$ is trivial, both these representations are trivial, and so $\Lambda$ is any linear map. Given such a $\Lambda$ we extend it by left-invariance to $SU(2)^2$. This gives rise to the $1$-form with values in $\mathfrak{g}$:
\begin{equation}\label{eq:InvariantConnection}
a=\sum_{i=1}^3 a_i^+ \otimes \eta_i^+ + a_i^- \otimes \eta_i^-,
\end{equation}
where $a_i^{\pm} \in \mathfrak{g}$ are constant on each principal orbit. Hence, on the open dense set $\mathbb{R}^+_t \times M \subset X$, the most general $SU(2) \times SU(2)$-invariant connection on any $P_{\lambda}$ can be written as in \eqref{eq:InvariantConnection} with the $a_i^{\pm}$ depending on $t \in \mathbb{R}^+$ and taking values in $\mathfrak{g}$.

\begin{remark}
We can always use an $SU(2)^2$-invariant gauge transformation $g : \mathbb{R}^+ \rightarrow G$ to put any invariant connection $A$ in temporal gauge. This amounts to solving the ODE $\dot{g}g^{-1} + gA(\partial_t)g^{-1} =0$, which has a unique solution $g$ converging to $1$ as $t \rightarrow + \infty$.
\end{remark}

\begin{lemma}\label{lem:Curvature}
The curvature of the connection $a(t)$ above on $\lbrace t \rbrace \times M$ is given by
\begin{eqnarray}\nonumber
F_a \!\! & = & \!\! \sum_{i=1}^3 [a_i^+, a_i^-] \otimes \eta_i^+ \wedge \eta_i^-   \\ \nonumber
& + &  \!\! \sum_{i=1}^3 \left( (-2 a_i^+ + [a_j^+, a_k^+]) \otimes \eta_{j}^+\wedge\eta_{k}^+ + (-2 a_i^+ + [a_j^-, a_k^-]) \otimes \eta_{j}^-\wedge\eta_{k}^- \right) \\ \nonumber
& + &  \!\! \sum_{i=1}^3 \left( (-2 a_i^- + [a_j^-, a_k^+]) \otimes \eta_{j}^- \wedge \eta_{k}^+ + (-2 a_i^- + [a_j^+, a_k^-]) \otimes \eta_{j}^+ \wedge \eta_{k}^- \right),
\end{eqnarray}
where in the summation above 
$(j,k)$ is such that $(i,j,k)$ is a cyclic permutation of $(1,2,3)$.
\end{lemma}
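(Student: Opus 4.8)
The plan is to compute the curvature directly from the formula $F_a = da + \frac{1}{2}[a,a]$, using the explicit expression \eqref{eq:InvariantConnection} for the invariant connection $a=\sum_i a_i^+\otimes\eta_i^+ + a_i^-\otimes\eta_i^-$, and then to collect terms according to the $2$-form basis $\{\eta_j^\pm\wedge\eta_k^\pm\}$. Since we work on a fixed principal orbit $\{t\}\times M$, the coefficients $a_i^\pm\in\mathfrak{g}$ are constant, so the exterior derivative $d a$ acts only on the Maurer--Cartan $1$-forms $\eta_i^\pm$, giving $da=\sum_i a_i^+\otimes d\eta_i^+ + a_i^-\otimes d\eta_i^-$. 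Here I would substitute the Maurer--Cartan relations \eqref{eq:MC1} and \eqref{eq:MC2}, which express $d\eta_i^+$ in terms of $\eta_j^+\wedge\eta_k^+$ and $\eta_j^-\wedge\eta_k^-$, and $d\eta_i^-$ in terms of $\eta_j^-\wedge\eta_k^+$.

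For the bracket term, I would expand $\frac{1}{2}[a,a]$ as a Lie-algebra-valued $2$-form. Writing $a=\sum_i a_i^+\otimes\eta_i^+ + a_i^-\otimes\eta_i^-$, the quantity $[a,a]$ produces terms $[a_i^\epsilon, a_j^{\epsilon'}]\otimes\eta_i^\epsilon\wedge\eta_j^{\epsilon'}$ for the various sign combinations $\epsilon,\epsilon'\in\{+,-\}$. After antisymmetrizing in the wedge and using $\eta_i^\epsilon\wedge\eta_i^\epsilon=0$, the surviving contributions group naturally into the four families of products $\eta_i^+\wedge\eta_i^-$, $\eta_j^+\wedge\eta_k^+$, $\eta_j^-\wedge\eta_k^-$, and the mixed $\eta_j^-\wedge\eta_k^+$ (equivalently $\eta_j^+\wedge\eta_k^-$). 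The main bookkeeping is to match these four families against the same four families arising from $da$ via the Maurer--Cartan relations, so that each basis $2$-form in the asserted formula receives its full coefficient.

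The one slightly delicate point — and the step I expect to be the main obstacle — is handling the $\epsilon_{ijk}$ factors and the cyclic-permutation convention consistently, so that the derivative terms $-2a_i^\pm$ appear with the correct sign and attach to the right basis $2$-form. Concretely, in \eqref{eq:MC1} the coefficient $-\epsilon_{ijk}$ means $d\eta_i^+$ contributes $-2a_i^+$ (after summing over the two orderings of $j,k$) to both $\eta_j^+\wedge\eta_k^+$ and $\eta_j^-\wedge\eta_k^-$, which explains why these two distinct basis elements share the same leading derivative term $-2a_i^+$; similarly \eqref{eq:MC2} feeds the factor $-2a_i^-$ into the mixed terms. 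I would verify the index alignment on one representative family (say $\eta_j^+\wedge\eta_k^+$) carefully, checking that the bracket contribution is exactly $[a_j^+,a_k^+]$ with the stated sign, and then invoke the symmetry of the computation under the $\pm$ and cyclic relabelling to obtain the remaining families. Collecting all contributions then yields precisely the three displayed lines of the claimed expression for $F_a$.
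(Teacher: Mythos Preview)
Your approach is correct and is exactly what the paper does: the paper's proof simply states that one computes $F_a = da + \tfrac{1}{2}[a\wedge a]$ using the Maurer--Cartan relations \eqref{eq:MC1}--\eqref{eq:MC2} and that the details are a lengthy but straightforward computation. Your proposal fills in precisely those details, with the correct handling of the $\epsilon_{ijk}$ factors and cyclic indexing.
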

\begin{proof}
We can compute the curvature via $F_a = da + \frac{1}{2}[a \wedge a]$ and the Maurer--Cartan relations \eqref{eq:MC1}-\eqref{eq:MC2} for the coframing $\eta_i^{\pm}$. The details are a lengthy but straightforward computation.
\end{proof}

\subsection[\texorpdfstring{The $SU(2)^2$-invariant ODEs}{The SU(2)xSU(2)-invariant ODEs}]{The {\boldmath $SU(2)^2$}-invariant ODEs}\label{ss:ODEs}

We may now write down the ODEs arising from equations \eqref{eq:MEvolution1} and \eqref{eq:MEvolution2} which describe our invariant $G_2$-instantons. 

\begin{lemma}\label{lem:ODE}
Let $(i,j,k)$ denote cyclic permutations of $(1,2,3)$. Using the notation from \eqref{eq:metric} 
and \eqref{eq:InvariantConnection}, the evolution equations \eqref{eq:MEvolution1}-\eqref{eq:MEvolution2} for $SU(2)^2$-invariant $G_2$-instantons $a$ 
on $\mathbb{R}^+_t \times M$ are 
\begin{eqnarray}\nonumber
\frac{B_i}{A_i} \dot{a}_i^+ + \left(\frac{B_i}{B_jB_k} - \frac{B_i}{A_jA_k} \right) a_i^+ & = & \frac{B_i }{2 B_j B_k } [a_j^- , a_k^-]-\frac{B_i}{2A_j A_k} [a_j^+ , a_k^+], 
\\ \nonumber
\frac{A_i}{B_i} \dot{a}_i^- + \left(\frac{A_i}{B_jA_k} + \frac{A_i}{A_jB_k} \right) a_i^- & = &  \frac{A_i }{2 B_j A_k } [a_j^- , a_k^+]+\frac{A_i}{2A_j B_k} [a_j^+ , a_k^-] 
,
\end{eqnarray}
together with the constraint
$$\sum_{i=1}^3 \frac{1}{A_iB_i} [a_i^+ , a_i^-]=0.$$
\end{lemma}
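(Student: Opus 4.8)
The plan is to substitute the explicit invariant connection \eqref{eq:InvariantConnection} and its curvature (Lemma \ref{lem:Curvature}) into the two evolution equations \eqref{eq:MEvolution1}--\eqref{eq:MEvolution2}, using the $SU(3)$-structure data \eqref{eq:BBSU(3)structure1}--\eqref{eq:BBSU(3)structure2} and the compatible metric \eqref{eq:metric}. Everything reduces to a direct, if lengthy, computation in the fixed coframing $\{\eta_i^{\pm}\}$, so the proof is essentially a matter of carefully matching coefficients of the various $3$-form and $4$-form monomials.

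\textbf{The two evolution equations.} First I would treat the constraint $\Lambda_t F_a = 0$, equivalently $F_a \wedge \omega^2 = 0$. Since $\omega = 4\sum_i A_iB_i\,\eta_i^-\wedge\eta_i^+$, one computes $\omega^2$ as a sum of terms $\eta_i^-\wedge\eta_i^+\wedge\eta_j^-\wedge\eta_j^+$, and wedging with the $\eta_i^+\wedge\eta_i^-$ component of $F_a$ (the first line in Lemma \ref{lem:Curvature}) is the only contribution that produces the top form $\eta_{123}^+\wedge\eta_{123}^-$. Collecting the coefficient, and dividing by the common volume factor, yields $\sum_i \frac{1}{A_iB_i}[a_i^+,a_i^-]=0$, which is the stated constraint. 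For the evolution equation \eqref{eq:MEvolution1}, I would compute $F_a \wedge \Omega_2$ using the explicit $\Omega_2$ in \eqref{eq:BBSU(3)structure2}, apply $\ast_t$ (with the metric \eqref{eq:metric} determining the Hodge star on the coframing), and separately compute $J_t\dot a$. The almost complex structure $J_t$ pairs $\eta_i^+$ with $\eta_i^-$ up to the factors $A_i, B_i$ read off from \eqref{eq:metric}, so $J_t\dot a$ produces terms proportional to $\dot a_i^{\pm}$ with the ratios $B_i/A_i$ and $A_i/B_i$ appearing; matching the $\eta_i^+$ and $\eta_i^-$ components then gives the two displayed ODEs respectively.

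\textbf{Bookkeeping of Hodge stars and $J_t$.} The main obstacle is purely computational: one must be scrupulous about the conventions for $\ast_t$ and $J_t$ induced by \eqref{eq:metric}, in particular the normalizing factors $2A_i$ and $2B_i$ and the orientation, since these determine the precise coefficients $\frac{B_i}{B_jB_k}-\frac{B_i}{A_jA_k}$ and $\frac{A_i}{B_jA_k}+\frac{A_i}{A_jB_k}$ multiplying $a_i^{\pm}$, as well as the factors of $\frac12$ in front of the brackets on the right-hand sides. A useful check is that the structure of the commutators is dictated by Lemma \ref{lem:Curvature}: the $\eta_i^+$-component of $F_a$ on the relevant $2$-forms involves $[a_j^+,a_k^+]$ and $[a_j^-,a_k^-]$, while the $\eta_i^-$-component involves the mixed brackets $[a_j^-,a_k^+]$ and $[a_j^+,a_k^-]$, matching exactly the right-hand sides of the two ODEs.

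\textbf{Consistency of the constraint.} Since the constraint $\Lambda_t F_a=0$ was already shown in Lemma \ref{lem:Constraint} to be preserved along the flow, I need only verify that its invariant form is the stated sum; no separate propagation argument is required here. Thus the whole statement follows by substituting the invariant ansatz into the coordinate-free equations \eqref{eq:MEvolution1}--\eqref{eq:MEvolution2} and reading off coefficients. I expect the derivation of the coefficient functions multiplying $a_i^{\pm}$ (the linear, non-bracket terms), which come from $d_a$ acting through the Maurer--Cartan relations \eqref{eq:MC1}--\eqref{eq:MC2} together with the Hodge star, to be the most delicate part to get exactly right, but it presents no conceptual difficulty.
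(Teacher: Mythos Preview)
Your proposal is correct and follows essentially the same approach as the paper: both substitute the curvature formula from Lemma \ref{lem:Curvature} and the $SU(3)$-structure data \eqref{eq:BBSU(3)structure1}--\eqref{eq:BBSU(3)structure2} into \eqref{eq:MEvolution1}--\eqref{eq:MEvolution2}, compute $F_a\wedge\Omega_2$, $\ast_t(F_a\wedge\Omega_2)$, $J_t\dot a$ (via $J_t\eta_i^+=\tfrac{B_i}{A_i}\eta_i^-$), and $F_a\wedge\tfrac{\omega^2}{2}$, then match coefficients. The paper simply fills in the explicit intermediate formulae for these quantities, which is exactly the bookkeeping you outline.
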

\begin{proof}
The proof amounts to inserting the formula for the curvature $F_a$ from Lemma \ref{lem:Curvature} 
into \eqref{eq:MEvolution1}-\eqref{eq:MEvolution2}. For this we need to use the $SU(3)$-structure on the principal orbits given in \eqref{eq:BBSU(3)structure1}-\eqref{eq:BBSU(3)structure2}. For convenience we shall write $\eta^{\pm}_{a\ldots b}=\eta^{\pm}_a\wedge\ldots\wedge\eta^{\pm}_{b}$.\\  
We start by computing
\begin{eqnarray}\nonumber
F_a \wedge \Omega_2 \!\!\! & = & \!\!\!-8 B_1 \left( A_2 B_3 ([a_2^-, a_3^+] - 2a_1^-) +A_3 B_2 ([a_2^+,a_3^-]-2a_1^-) \right) \eta_{123}^-  \wedge \eta_{23}^+ \\ \nonumber
&  & \!\!\! -8 A_1 \left( A_2 A_3 ([a_2^-, a_3^-] - 2a_1^+) - B_2 B_3 ([a_2^+,a_3^+]-2a_1^+) \right) \eta_{123}^+  \wedge \eta_{23}^- \\ \nonumber
& & \!\!\! + \text{ cyclic permutations}.
\end{eqnarray}
Moreover, since $\vert \eta_i^-\vert_t = \frac{1}{2B_i}$ and $\vert \eta_i^+ \vert_t = \frac{1}{2A_i}$, we conclude that
$$\ast_t (8\eta_{123}^- \wedge \eta_{23}^+)  = -\frac{1}{2}\frac{A_1}{ A_2A_3B_1B_2B_3}\eta_1^+, \quad \ast_t (8\eta_{123}^+ \wedge \eta_{23}^-)  = \frac{1}{2}\frac{B_1}{ B_2B_3A_1A_2A_3}\eta_1^-$$
and cyclic permutations. Combining this with the previous computation we obtain \begin{eqnarray}\nonumber
\ast_t (F_a \wedge \Omega_2) & = & \frac{1}{2} \left( \frac{A_1}{A_3B_2 } ([a_2^-, a_3^+] - 2a_1^-) + \frac{A_1}{A_2B_3 } ([a_2^+,a_3^-]-2a_1^-) \right) \eta_{1}^+ \\ \nonumber
& - &  \frac{1}{2} \left( \frac{B_1}{B_2 B_3} ([a_2^-, a_3^-] - 2a_1^+) - \frac{B_1}{A_2 A_3} ([a_2^+,a_3^+]-2a_1^+) \right) \eta_{1}^- \\ \nonumber
& + &  \text{cyclic permutations}.
\end{eqnarray}
The complex structure $J_t$ is such that 
$$J_t \eta_i^+=-*\left(\eta_i^+\wedge\frac{\omega^2}{2}\right) = *(16A_jB_jA_kB_k\eta_{123}^+\eta_{jk}^-)= \frac{B_i}{A_i} \eta_i^-$$ and so it is straightforward to compute 
$$J_t \dot{a} = \sum_{i=1}^3 \frac{B_i}{A_i} \dot{a}_i^+ \otimes \eta_i^- - \frac{A_i}{B_i} \dot{a}_i^- \otimes \eta_i^+.$$
Inserting our formulae in \eqref{eq:IEvolution} 
gives the ODEs in the statement. We finally compute
$$F_a \wedge \frac{\omega^2}{2}= -16 A_1A_2 B_1B_2 [a_3^+,a_3^-] \eta_{123}^+ \wedge \eta_{123}^-+ \text{cyclic permutations},$$
yielding the constraint in the statement.
\end{proof}

\subsection{Elementary solutions}\label{sec:solutions}

In this subsection we consider elementary cases of $SU(2)^2$-invariant 
$G_2$-instanton equations on any of the $SU(2)^2$-invariant 
$G_2$-manifolds of cohomogeneity-$1$ described in $\S$\ref{sec:G2metrics}.   We will let $X$ denote such a $G_2$-manifold.\\  
We verify that flat connections satisfying our $G_2$-instanton equations in $\S$\ref{ss:flat} and we classify and describe all abelian $G_2$-instantons explicitly in $\S$\ref{ss:abelian}.

\subsubsection{Flat connections}\label{ss:flat}

Any flat connection on $X$ is obviously a $G_2$-instanton and so must be a solution to our equations (for any gauge group $G$). As the fundamental group $\pi_1(X)$ 
is trivial, any flat connection in this setting is gauge equivalent to the trivial connection. However, on a homogeneous bundle there may be invariant flat connections that are not gauge equivalent to the trivial connection through invariant gauge transformations.\\
Let $A=a(t)$ be an invariant connection on $X$ given as in \eqref{eq:InvariantConnection} for $a_i^{\pm}:\mathbb{R}^+\to\mathfrak{g}$ for $i=1,2,3$.  From the formula in Lemma \ref{lem:Curvature} for the curvature of $a(t)$, one sees that $A$ is flat if and only if $a_i^{\pm}$ are $t$-independent 
and, 
for all cyclic permutations $(i,j,k)$ of $(1,2,3)$,  we have
\begin{equation*}
[a_i^+, a_i^-] =0,\; a_i^+ = \frac{1}{2} [a_j^+, a_k^+] = \frac{1}{2} [a_j^-, a_k^-] \,\;\text{and}\;\, 
a_i^- = \frac{1}{2} [a_j^-, a_k^+] = \frac{1}{2} [a_j^+, a_k^-].
\end{equation*}
 It is elementary to verify that a constant (i.e.~$t$-independent) choice  of $a_i^{\pm}$ satisfying these conditions then solves the ODE system for $SU(2)^2$-invariant $G_2$-instantons in Lemma \ref{lem:ODE}.

\subsubsection{Abelian instantons}\label{ss:abelian}

On circle bundles, equivalently complex line bundles, the Lie algebra structure of the gauge group is trivial and the $G_2$-instanton equations in Lemma \ref{lem:ODE} become linear. Consequently, it is then easy to integrate them, which we shall now proceed to do.\\ By Lemma \ref{lem:ODE}, to find a $G_2$-instanton in this setting we must integrate
\begin{equation}\label{eq:abelian}
\dot{a}_i^+  =  - \left(\frac{A_i}{B_jB_k} - \frac{A_i}{A_jA_k} \right) a_i^+, \quad
\dot{a}_i^-  = - \left(\frac{B_i}{B_jA_k} + \frac{B_i}{A_jB_k} \right) a_i^-.
\end{equation}
Given $t_0 \in \mathbb{R}^+$, the equations \eqref{eq:abelian} can be integrated to
\begin{eqnarray}\label{eq:LinearInstantonPlus}
a_i^+ (t) & = & a_{i}^+(t_0) \exp \left( - \int_{t_0}^t \left(\frac{A_i}{B_jB_k} - \frac{A_i}{A_jA_k} \right) ds \right) , \\ \label{eq:LinearInstantonMinus}
a_i^- (t) & = & a_{i}^-(t_0) \exp \left( - \int_{t_0}^t \left(\frac{B_i}{B_jA_k} + \frac{B_i}{A_jB_k} \right) ds \right).
\end{eqnarray}
We see from the free constants in \eqref{eq:LinearInstantonPlus}-\eqref{eq:LinearInstantonMinus} that there is a family of $G_2$-instantons parametrized by $6$ real parameters defined on the complement of the singular orbit in $X$.  Using the results in Appendix \ref{app:R4xS3} 
we can characterise the subspaces of this family of  $G_2$-instantons  which smoothly extend over the singular orbit in the BS, BGGG and Bogoyavlenskaya metrics.  

\begin{proposition}\label{prop:U1}
Let $A$ be an $SU(2)^2$-invariant $G_2$-instanton on a $U(1)$-bundle, or equivalently a complex line bundle, over $\mathbb{R}^4\times S^3$ with an $SU(2)^2\times U(1)$-invariant $G_2$-holonomy metric. 
Then $A$ can be written as
$$A= \sum_{i=1}^3 a_{i}^+(t_0) \exp \left( - \int_{t_0}^t \left(\frac{A_i}{B_jB_k} - \frac{A_i}{A_jA_k} \right) ds \right) \eta_i^+ ,$$
for some $t_0\in\mathbb{R}^+$ and $a_i^+(t_0) \in \mathbb{R}$  for $i=1,2,3$, where $(i,j,k)$ is a cyclic permutation of $(1,2,3)$.
\end{proposition}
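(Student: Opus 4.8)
The plan is to take the explicit $6$-parameter family of abelian instantons produced in \eqref{eq:LinearInstantonPlus}--\eqref{eq:LinearInstantonMinus} on the complement $\mathbb{R}^+_t\times M$ of the singular orbit, and to determine which members extend smoothly across $\{0\}\times S^3$. On a $U(1)$-bundle the instanton equations are linear, so every invariant solution is of the form \eqref{eq:InvariantConnection} with coefficients $a_i^+(t),a_i^-(t)$ given by those integral formulae; the entire content of the statement is that smoothness at the singular orbit forces $a_i^-\equiv 0$. I would prove this by analysing the behaviour of the two families of integrals as $t\to 0^+$.

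First I would record the asymptotics of $A_i,B_i$ near the singular orbit. Near $\{0\}\times S^3$ the $\mathbb{R}^4$-factor collapses, so (by the smooth-extension analysis of Appendix \ref{app:R4xS3}) the three $\eta_i^+$-directions shrink while the $\eta_i^-$-directions survive: with $t$ the geodesic distance to the orbit one has $A_i=\tfrac{t}{2}+O(t^2)$ and $B_i\to b_i>0$ as $t\to 0$. Substituting this into \eqref{eq:LinearInstantonPlus}--\eqref{eq:LinearInstantonMinus}, the integrands have leading behaviour $\tfrac{A_i}{B_jB_k}-\tfrac{A_i}{A_jA_k}\sim -\tfrac{2}{t}$ and $\tfrac{B_i}{B_jA_k}+\tfrac{B_i}{A_jB_k}\sim \tfrac{4}{t}$.

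The key step is the resulting blow-up dichotomy. Integrating the second integrand from a fixed $t_0$ down to small $t$ gives $-\int_{t_0}^t(\cdots)\,ds\sim 4\log(t_0/t)\to+\infty$, so $a_i^-(t)\sim a_i^-(t_0)\,t^{-4}$. Since $\lvert\eta_i^-\rvert_t=\tfrac{1}{2B_i}$ stays bounded and $\eta_i^-$ is tangent to the surviving $S^3$, a coefficient growing like $t^{-4}$ is incompatible with a connection that is even bounded at the orbit; hence smoothness forces $a_i^-(t_0)=0$ for each $i$, and therefore $a_i^-\equiv 0$. For the first integrand one finds instead $a_i^+(t)\sim a_i^+(t_0)\,t^{2}$. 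As the smooth $SU(2)^2$-invariant $1$-forms on the collapsing $\mathbb{R}^4$-factor are spanned near the orbit by $dt$ together with the three forms comparable to $t^2\eta_i^+$ (the $\eta_i^+$ carrying the vanishing metric factor $A_i\sim t/2$), a coefficient of order $t^2$ is exactly what smooth extension demands; Appendix \ref{app:R4xS3} confirms this and shows the three parameters $a_i^+(t_0)$ are left unconstrained, giving the displayed formula.

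I expect the genuine obstacle to lie not in the elementary asymptotics above but in pinning down the precise smoothness criterion at the singular orbit. One must justify, through the Eschenburg--Wang extension theory underlying Appendix \ref{app:R4xS3}, both that an $\eta_i^-$-coefficient blowing up like $t^{-4}$ really does obstruct a smooth (indeed continuous) extension, and that the $O(t^2)$ behaviour of the $\eta_i^+$-coefficients is not merely necessary but sufficient, with no hidden linear relation imposed among the three surviving parameters.
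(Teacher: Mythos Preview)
Your proposal is correct and follows essentially the same route as the paper: integrate the linear ODEs to get the $6$-parameter family, compute the leading behaviour of the integrands near $t=0$ from $A_i\sim t/2$, $B_i\to b$ (giving $-2/t$ and $4/t$ respectively), deduce $a_i^+\sim t^2$ versus $a_i^-\sim t^{-4}$, and invoke the extension criterion of Appendix~\ref{app:R4xS3} (Lemma~\ref{lem:Extend1formOnBS}) to kill the $a_i^-$ and keep the $a_i^+$. The paper adds one preliminary step you leave implicit: it observes that the only homomorphism $\Delta SU(2)\to U(1)$ is trivial, so the bundle has a unique extension over the singular orbit and the canonical invariant connection there is zero---this is why Lemma~\ref{lem:Extend1formOnBS} applies directly to $A$ itself rather than to $A-A^{\text{can}}$. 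Two cosmetic corrections: the $B_i$ all share the \emph{same} limit $b$ at $t=0$ (not distinct $b_i$), and $A_i=t/2+O(t^3)$ since $A_i$ is odd; neither affects your asymptotics.
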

\begin{proof} 
The principal orbits on $\mathbb{R}^4\times S^3$ are $S^3 \times S^3$ and the singular one is $S^3=SU(2)^2/ \Delta SU(2)$. The extensions of a circle bundle $P$ on $\mathbb{R}^+\times S^3\times S^3$ to $S^3$ are parametrized by isotropy homomorphisms $\lambda:\Delta SU(2)\to U(1)$.  The only such homomorphism $\lambda$ is the trivial one, so the unique extension of $P$ to the singular orbit is as the trivial bundle.
\\ 
The canonical invariant connection on the trivial homogeneous bundle vanishes as an element of $\Omega^1(SU(2)^2 , \mathbb{R})$. Any other invariant connection on this bundle 
is then given as an element of $\Omega^1(SU(2)^2 , \mathbb{R})$ by the pullback of a bi-invariant $1$-form on $S^3=SU(2)^2/ \Delta SU(2)$.  However, the only such 1-form is the zero form, so the connection $A$ extends over the singular orbit if and only if  Lemma \ref{lem:Extend1formOnBS} in  Appendix \ref{app:R4xS3} applies to the $1$-form $a=\sum_{i=1}^3 a_i^+ \eta_i^+ + a_i^- \eta_i^-$.\\  
We deduce that, for $t$ near $0$, the $a_{i}^{\pm}(t)$ are even and $a_i^{\pm}(0)=0$ for  $i=1,2,3$. We know by Appendix \ref{app:R4xS3} (and explicitly by Examples \ref{ex:BS}-\ref{ex:BGGG} for the BS and BGGG metrics)   that for $t$ near $0$ we have 
$A_i(t)= \frac{t}{2} + t^3 C_i(t)$ and $B_i(t)=b_0+t^2 D_i(t)$, for some real analytic $C_i$, $D_i$ and some constant $b_0\neq 0$. Then, choosing $0<t_0 \ll 1$ and using the expressions \eqref{eq:LinearInstantonPlus}-\eqref{eq:LinearInstantonMinus}, we compute that for $t < t_0 \ll 1$
$$a_i^+(t) = a_i^+(t_0)t_0^{-2} t^2 + \ldots  \quad \text{and} \quad a_i^-(t) = a_i^-(t_0) t_0^4 t^{-4} + O(1).$$
Applying Lemma \ref{lem:Extend1formOnBS} to $a$, we deduce that $a_i^-(t_0)$ must vanish for $i=1,2,3$, while the $a_i^+(t_0)$ can be freely chosen.
\end{proof}

\noindent In the BS or BGGG case, we can evaluate the integrals in Proposition \ref{prop:U1} to give the following.

\begin{corollary}\label{cor:U1}
Let $A$ be an $SU(2)^2$-invariant $G_2$-instanton with gauge group $U(1)$ over the BS or BGGG $G_2$-manifold $\mathbb{R}^4\times S^3$ described in $\S$\ref{sec:G2metrics}.
\begin{itemize}
\item[(a)] In the BS case, $A$ can be written as
\begin{equation*}
A 
=\frac{r^3-1}{r}\sum_{i=1}^3x_i\eta_i^+
\end{equation*}
for some $x_1,x_2,x_3\in\mathbb{R}$, where $r\in[1,+\infty)$ is determined 
by \eqref{eq:r.BS}. 
\item[(b)] In the BGGG case, $A$ can be written as  
\begin{equation*}
A=\frac{(r-9/4)(r+9/4)}{(r-3/4)(r+3/4)}x_1\eta_1^++
\frac{(r-9/4)e^r}{\sqrt{r}(r+9/4)^2}(x_2\eta_2^++x_3\eta_3^+)
\end{equation*}
for some $x_1,x_2,x_3\in\mathbb{R}$, where $r\in[9/4,+\infty)$ is given by \eqref{eq:r.BGGG}.   When $x_2=x_3=0$, $A$ is a multiple of the harmonic 1-form dual to the Killing field generating the $U(1)$-action.
\end{itemize}
\end{corollary}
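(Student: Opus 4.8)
The plan is to take the formula for $A$ from Proposition \ref{prop:U1} as the starting point and to evaluate the integrals $\int_{t_0}^t\big(\tfrac{A_i}{B_jB_k}-\tfrac{A_i}{A_jA_k}\big)\,ds$ explicitly for the two metrics, changing the variable of integration from the arc-length parameter to the radial coordinate $r$. For this I would use $dt/dr=(1-r^{-3})^{-1/2}$ from \eqref{eq:r.BS} in the BS case, and $dt/dr=\sqrt{(r-3/4)(r+3/4)}/\sqrt{(r-9/4)(r+9/4)}$ from \eqref{eq:r.BGGG} in the BGGG case, so that each integrand becomes a function of $r$ alone. The goal in each case is to show that this integrand is the logarithmic derivative in $r$ of the claimed coefficient function, so that exponentiating recovers the stated formula up to the free multiplicative constant $x_i$ (which absorbs both $a_i^+(t_0)$ and the constant of integration).

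For the BS metric (part (a)) the computation is short: since $A_1=A_2=A_3$ and $B_1=B_2=B_3$, the integrand for every $i$ equals $\tfrac{A_1}{B_1^2}-\tfrac1{A_1}$, and substituting \eqref{eq:AB.BS} and multiplying by $dt/dr$ collapses it to the rational function $-(2r^3+1)/\big(r(r^3-1)\big)$. I would then observe that this is exactly $-\tfrac{d}{dr}\log\tfrac{r^3-1}{r}$, which immediately gives $a_i^+\propto (r^3-1)/r$ and hence the stated formula.

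The BGGG metric (part (b)) is the substantive case and where I expect the main difficulty. The integrands now mix the two families $A_1,B_1$ and $A_2=A_3,B_2=B_3$, and a priori involve several nested radicals. The key to taming them is the special algebraic structure of the BGGG solution \eqref{eq:BGGG1}--\eqref{eq:BGGG2}: I would first record the two identities $A_2^2-B_2^2=-r$ and $dt/dr=1/A_1$, which are precisely what make the integrands rational. For $i=1$ one finds $\big(\tfrac{A_1}{B_2^2}-\tfrac{A_1}{A_2^2}\big)\tfrac{dt}{dr}=-9r/\big((r^2-\tfrac{81}{16})(r^2-\tfrac{9}{16})\big)$, whose partial-fraction integral is $\log\tfrac{(r-3/4)(r+3/4)}{(r-9/4)(r+9/4)}$, giving the first coefficient. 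For $i=2$ (and $i=3$ identically, by the $2\leftrightarrow3$ symmetry) the radicals in $\tfrac{A_2}{B_1B_2}\cdot\tfrac1{A_1}$ cancel after using $dt/dr=1/A_1$, leaving $\tfrac{3}{2r}\tfrac{r+3/4}{r+9/4}-\tfrac{(r-3/4)(r+3/4)}{(r-9/4)(r+9/4)}$, which a partial-fraction decomposition turns into $\tfrac1{2r}+\tfrac2{r+9/4}-1-\tfrac1{r-9/4}$; crucially, the constant $-1$ term integrates to the exponential factor $e^r$ in the claimed formula. The main obstacle is thus purely computational: carrying out these radical cancellations and partial fractions correctly, the nonobvious point being that the BGGG solution is arranged exactly so that everything rationalizes.

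Finally, for the last assertion I would compute the metric dual of the Killing field $T_1^+$ generating the $U(1)$-action. From \eqref{eq:metric} this dual $1$-form is $4A_1^2\,\eta_1^+$, proportional to $\tfrac{(r-9/4)(r+9/4)}{(r-3/4)(r+3/4)}\eta_1^+$, i.e.\ to the $x_2=x_3=0$ solution. Since the $G_2$-manifold is Ricci-flat, the dual of any Killing field is harmonic: by the Bochner identity $\nabla^*\nabla\alpha=\Ric(\alpha)$ together with the Weitzenb\"ock formula, the Hodge Laplacian of such a $1$-form equals $2\Ric(\alpha)=0$. This identifies the $x_2=x_3=0$ solution with the claimed harmonic representative.
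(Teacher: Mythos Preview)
Your proposal is correct and follows exactly the approach the paper takes: the paper's proof of this corollary consists solely of the sentence that one evaluates the integrals in Proposition~\ref{prop:U1}, and you have carried out precisely those evaluations (including the harmonicity claim, which the paper leaves unproved). Your use of the identities $A_2^2-B_2^2=-r$ and $dt/dr=1/A_1$ to rationalise the BGGG integrands is clean and your partial-fraction computations check out.
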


\noindent We already observe a marked difference in 
the behaviour of $G_2$-instantons on the BS and BGGG $\mathbb{R}^4\times S^3$ in this simple abelian setting.  In particular, the instantons in the BS case all have bounded curvature, whereas those in the BGGG case have bounded curvature only when $x_2=x_3=0$, in which case the curvature also decays to $0$ as $r\to\infty$.

\begin{remark}
Of course, for any abelian gauge group all Lie brackets vanish and the ODE system decouples into several independent linear ODEs for instantons on circle bundles. Hence, the construction of any abelian $G_2$-instanton in this setting reduces to the $U(1)$ case given in Proposition \ref{prop:U1}.
\end{remark}

\begin{remark}
We can also explicitly describe the abelian instantons on the Bazaikin--Bogoyavlenskaya manifolds.  Since the analysis for smooth extension over the singular orbit is more involved in this setting and we also wish to apply these results to more general $G_2$-instantons, we will report on this in future work.
\end{remark}

\section[\texorpdfstring{{\boldmath $SU(2)^3$}-invariant {\boldmath $G_2$}-instantons}{SU(2)xSU(2)xSU(2)-invariant G2-instantons}]{{\boldmath $SU(2)^3$}-invariant {\boldmath $G_2$}-instantons}\label{sec:SU(2)^3invariant}

In the Bryant--Salamon case the torsion-free $G_2$-structure is described in section \ref{sss:BS}. Recall that the structure enjoys an extra $SU(2)$-symmetry, so that $A_1=A_2=A_3$ and $B_1=B_2=B_3$ where
\begin{equation}\label{eq:ODEsMetricBryantSalamon}
\dot{A}_1=\frac{1}{2} \left( 1- \frac{A_1^2}{B_1^2} \right), \quad \dot{B}_1= \frac{A_1}{B_1}.
\end{equation}
We shall use this notation throughout this section.\\
The only possible homogeneous $SU(2)$-bundle $P$ on the principal orbits $S^3\times S^3$ is $P=SU(2)^2 \times SU(2)$, i.e.~the trivial $SU(2)$-bundle. We consider connection $1$-forms with the extra $SU(2)$-symmetry existent in the underlying geometry.\\
We begin in $\S$\ref{ss:BSODEs} by simplifying the ODEs and constraint system in Lemma \ref{lem:ODE} to this more symmetric situation, and then derive the conditions necessary to extend the solution to this system across the singular orbit in $\S$\ref{ss:BSinits}.  We give classification results for the solutions to these equations in $\S$\ref{ss:BSsols}.  We also examine the asymptotic behaviour of the solutions in terms of a connection on $S^3\times S^3$, and give a compactness result for the space of solutions.  The latter result is related to the familiar ``bubbling'' and ``removable singularities'' phenomena.  

\subsection[\texorpdfstring{The $SU(2)^3$-invariant ODEs}{The SU(2)xSU(2)xSU(2)-invariant ODEs}]{The {\boldmath $SU(2)^3$}-invariant ODEs}\label{ss:BSODEs}

We simplify the invariant $G_2$-instanton equations from Lemma \ref{lem:ODE} in this setting.

\begin{proposition}\label{prop:BSODEs}
Let $A$ be an $SU(2)^3$-invariant $G_2$-instanton  with gauge group $SU(2)$ on $\mathbb{R}^+ \times SU(2)^2 \cong \mathbb{R}^+ \times SU(2)^3/ \Delta SU(2)$. There is a standard basis $\{T_i\}$ of $\mathfrak{su}(2)$, 
i.e.~with $[T_i , T_j]= 2 \epsilon_{ijk} T_k$, such that (up to an invariant gauge transformation) we can write
\begin{equation}\label{eq:ClarkeA}
A=A_1x\left( \sum_{i=1}^3 T_i\otimes \eta_i^+ 
\right) + B_1y\left( \sum_{i=1}^3 T_i\otimes\eta_i^- 
\right),
\end{equation}
with $x$, $y:\mathbb{R}^+\to\mathbb{R}$ satisfying 
\begin{align}\label{eq:Clarke1}
\dot{x} & \; = \; \frac{\dot{A}_1}{A_1} x + y^2-x^2  && \hspace{-24pt}   = \; \frac{1}{2A_1}\left(1-\frac{A_1^2}{B_1^2}\right)x+y^2-x^2, && \\ \label{eq:Clarke2}
\dot{y} & \; = \;  \frac{ 2\dot{A}_1 -3}{A_1} y  +2xy  && \hspace{-24pt} =   \;
-\frac{1}{A_1}\left(2+\frac{A_1^2}{B_1^2}\right)y+2xy.&&
\end{align}
\end{proposition}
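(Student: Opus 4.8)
The plan is to prove the statement in two stages: first reduce the \emph{shape} of the connection using the extra symmetry, and then substitute the resulting ansatz into Lemma \ref{lem:ODE}. Since an $SU(2)^3$-invariant connection is in particular $SU(2)^2$-invariant, it is automatically of the form \eqref{eq:InvariantConnection} with coefficients $a_i^\pm\co\R^+\to\su(2)$. I would then impose invariance under the additional $SU(2)\subset SU(2)^3$. This factor acts on the principal orbit $S^3\times S^3$ and rotates the coframe: because it acts on $\su(2)\oplus\su(2)$ through the adjoint of the diagonal, it preserves both $\su^+$ and $\su^-$ and acts on each of them — hence on the index $i$ of $\eta_i^+$ and of $\eta_i^-$ — by one and the same standard (real, $3$-dimensional, absolutely irreducible) representation. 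Lifting the action to the $SU(2)$-bundle, it acts on the gauge algebra $\su(2)\iso\R^3$ by the adjoint, which for the faithful lift relevant to the irreducible case is again this standard representation.

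Invariance of $a$ then says exactly that $i\mapsto a_i^+$ and $i\mapsto a_i^-$ are morphisms of $SU(2)$-representations from $\R^3$ to $\su(2)$. As source and target are the same absolutely irreducible representation, Schur's lemma gives $\Hom_{SU(2)}(\R^3,\su(2))\iso\R$, spanned by a single equivariant isomorphism $\Phi$. Hence $a_i^+(t)=c^+(t)\,\Phi(e_i)$ and $a_i^-(t)=c^-(t)\,\Phi(e_i)$ for functions $c^\pm\co\R^+\to\R$; in particular the $+$ and $-$ components are automatically aligned to the same basis vectors. Using an invariant gauge transformation (conjugation) to normalise $\Phi$ so that $T_i:=\Phi(e_i)$ is a standard basis with $[T_i,T_j]=2\epsilon_{ijk}T_k$, and writing $c^+=A_1x$, $c^-=B_1y$, yields the form \eqref{eq:ClarkeA}.

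Finally I would substitute $a_i^+=A_1x\,T_i$, $a_i^-=B_1y\,T_i$ together with $A_i=A_1$, $B_i=B_1$ into Lemma \ref{lem:ODE}. Using $[T_j,T_k]=2T_i$ for cyclic $(i,j,k)$, the brackets collapse to $[a_j^+,a_k^+]=2A_1^2x^2T_i$, $[a_j^-,a_k^-]=2B_1^2y^2T_i$ and $[a_j^\pm,a_k^\mp]=2A_1B_1xy\,T_i$. The first equation of Lemma \ref{lem:ODE} then has right-hand side $B_1(y^2-x^2)T_i$, and after dividing through by $B_1$ and inserting $\dot A_1/A_1=\tfrac{1}{2A_1}(1-A_1^2/B_1^2)$ from \eqref{eq:ODEsMetricBryantSalamon} it becomes precisely \eqref{eq:Clarke1}; the second becomes \eqref{eq:Clarke2} after using $\dot B_1=A_1/B_1$. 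The constraint is automatic, since $[a_i^+,a_i^-]=A_1B_1xy\,[T_i,T_i]=0$.

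I expect the genuine content to lie in the first two steps, namely correctly identifying the action of the third $SU(2)$-factor on the coframe \emph{and} on the gauge algebra and verifying that it is the same irreducible representation in all three places, so that Schur's lemma forces the single-parameter shape with a common basis $T_i$; the simultaneous alignment of the $+$ and $-$ components to that basis is the delicate point. The passage to the explicit ODEs is then a mechanical, if slightly lengthy, substitution.
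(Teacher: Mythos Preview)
Your approach is correct and follows essentially the same route as the paper: reduce the shape of the connection via Schur's lemma for the isotropy representation, then substitute into Lemma~\ref{lem:ODE}. Two small remarks are worth making.

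First, your Schur argument is actually cleaner than the paper's. The paper applies Wang's theorem directly for $SU(2)^3/\Delta SU(2)$, obtains the intertwiners $\Lambda^{\pm}$, and then at each fixed $t$ applies a gauge transformation $\gamma(t)$ to bring the basis to standard form; it must then use the $G_2$-instanton ODEs themselves to show $\dot\gamma=0$. Your observation that $\Hom_{SU(2)}(\R^3,\su(2))$ is one-dimensional over $\R$ means every $a_i^{\pm}(t)$ is already a scalar multiple of a \emph{fixed} intertwiner $\Phi$, so no $t$-dependent gauge transformation arises and that step is unnecessary.

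Second, you should make the case analysis on the lift explicit rather than leaving it as ``for the faithful lift relevant to the irreducible case''. There are exactly two $SU(2)^3$-homogeneous bundle structures, corresponding to $\mu:\Delta SU(2)\to SU(2)$ being either $\id$ or trivial. For $\mu=\id$ your argument applies verbatim. For $\mu=1$ the gauge algebra is the trivial $SU(2)$-representation, and Schur then forces $a_i^{\pm}\equiv 0$; this is the case $x=y=0$ in \eqref{eq:ClarkeA}. The paper treats this case separately and you should too, if only in one sentence.
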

\begin{proof}
We start by realizing $SU(2)^2$ as $SU(2)^3/\Delta SU(2)$. Isomorphism classes of $SU(2)^3$-equivariant bundles over $SU(2)^2$ are then in correspondence with conjugation classes of homomorphisms $\mu : SU(2) \rightarrow SU(2)$.  There are only two such conjugation classes, namely those represented by the identity and the trivial homomorphism.  
\\ 
We begin with the case where $\mu$ is the identity.  First, we fix a reductive decomposition of $\mathfrak{su}(2)^3$, i.e.~a complement $\mathfrak{m}$ of the isotropy algebra $\Delta \mathfrak{su}(2) \subset \mathfrak{su}(2)^3$ such that $[\Delta \mathfrak{su}(2) , \mathfrak{m}] \subset \mathfrak{m}$. We set $\Delta^+,\Delta^- \subset \mathfrak{su}^2$ to be diagonal and anti-diagonal respectively, and let
$$\mathfrak{m} = (0 \oplus \Delta^+ ) \oplus (0 \oplus \Delta^-).$$
By Wang's theorem \cite{Wang1958}, any $SU(2)^3$-invariant connection can be written as $$A= d \mu + \Lambda^+ + \Lambda^- \in \Omega^1(SU(2)^3 , \mathfrak{su}(2)),$$ where $\Lambda^{\pm}: ( \Delta^{\pm}, \Ad ) \rightarrow (\mathfrak{su}(2), \Ad \circ \mu)$ are morphisms of $SU(2)$-representations. We now  pull $A$ back to $SU(2)^2$ via the map $ \psi: SU(2)^2 \rightarrow SU(2)^3$ given by $\psi(g_1,g_2)= (g_2 g_1^{-1} , g_1 , g_2)$. Then $\psi^* d \mu =0$ and $\psi^* \Lambda^{\pm} = f^{\pm}_{ij} T_j \otimes \eta_i^{\mp}$ (the inversion to $\mp$ on the $\eta_i^{\mp}$ is correct!), for some functions $f_{ij}^{\pm}$ and some fixed standard basis $\{T_i\}$ of $\mathfrak{su}(2)$.  Extending naturally to $\mathbb{R}^+ \times SU(2)^2$ we obtain $a_i^{\pm}=f^{\mp}_{ij} T_j$.\\ 
For a fixed $t\in\mathbb{R}^+$, we can apply a gauge transformation so that 
 $\mu=\id$.  Hence, we can write 
 $a_i^+(t)=A_1(t) x(t)$ and $a_i^-(t)=B_1(t) y(t)$ for $i=1,2,3$, since the adjoint representation  of $SU(2)$ is irreducible, where we have introduced the non-zero factors of $A_1$ and $B_1$ for convenience.  Since the gauge transformation depends on $t$, we deduce that we can write
 $$A=A_1x \gamma(\sum_i T_i\otimes \eta_i^+)\gamma^{-1} +B_1y \gamma(\sum_i T_i\otimes\eta_i^-)\gamma^{-1}$$
 for some functions $\gamma:\mathbb{R}^+\to SU(2)$ and $x,y:\mathbb{R}^+\to\mathbb{R}$.  
\\
We now turn to the ODEs and constraint from Lemma \ref{lem:ODE} arising from the $G_2$-instanton condition.  We see that the constraint is immediately satisfied 
and the symmetry in the ODEs forces 
$$A_1x[\gamma^{-1}\dot{\gamma},T_i]=0\quad\text{and}\quad B_1y[\gamma^{-1}\dot{\gamma},T_i]=0$$
for $i=1,2,3$, which means that $\dot{\gamma}=0$ whenever $A$ is non-zero. 
Therefore, we may always write $A$ as in \eqref{eq:ClarkeA}.  
Using \eqref{eq:ODEsMetricBryantSalamon}, we then conclude that the ODEs from Lemma \ref{lem:ODE} imply that $x$ and $y$ satisfy \eqref{eq:Clarke1}-\eqref{eq:Clarke2} as claimed.
\\
We turn now to the case when $\mu: SU(2) \rightarrow SU(2)$ is the trivial homomorphism. Here, the canonical invariant connection $d \mu$ vanishes as a $1$-form on $SU(2)^3$ with values in $\mathfrak{su}(2)$. By Wang's theorem, any other invariant connection is then given by a morphism of $\Delta SU(2)$-representations $\Lambda : ( \mathfrak{m}, \Ad) \rightarrow (\mathfrak{su}(2) , \Ad \circ \mu)$. The left-hand side splits into two copies of the adjoint representation of $SU(2)$ while the right-hand side decomposes into three trivial representations. Schur's lemma then implies that $\Lambda$ must vanish and so the trivial connection is the unique invariant one on this homogeneous bundle.  This corresponds to taking $x=y=0$ in the statement.
\end{proof}

\subsection{Initial conditions}\label{ss:BSinits}

Now we determine the initial conditions in order for an $SU(2)^3$-invariant $G_2$-instanton $A$, given by a solution to the ODEs in Proposition \ref{prop:BSODEs}, to extend smoothly over the singular orbit $S^3=SU(2)^2/ \Delta SU(2)$. For that we need to first extend the bundle over the singular orbit. Up to an isomorphism of homogeneous bundles, there are two possibilities: these are
\begin{equation}\label{eq:Plambda}
P_{\lambda}=SU(2)^2 \times_{(\Delta SU(2), \lambda)} SU(2),
\end{equation}
with the homomorphism $\lambda:SU(2)\rightarrow SU(2)$ being either the trivial one (which we denote by $1$) or the identity $\id$. Depending on the choice of $\lambda$, the conditions for the connection $A$ to extend are different, as we show in the following lemma.

\begin{lemma}\label{lem:SmoothlyExtendBS}
The connection $A$ in \eqref{eq:ClarkeA} extends smoothly over the singular orbit $S^3$ if $x(t)$ is odd, $y(t)$ is even, and their Taylor expansions around $t=0$ are 
\begin{itemize}
\item either $x(t)=x_1t+ x_3 t^3+\ldots , \ y(t)= 
y_2 t^2 + \ldots$, in which case $A$ extends smoothly 
as a connection on $P_{1}$; 

\item or $x(t)=\frac{2}{t}+x_1t+\ldots , \ y(t)=y_0 + y_2 t^2 +\ldots$, in which case $A$ extends smoothly as a connection on $P_{\id}$.
\end{itemize}
\end{lemma}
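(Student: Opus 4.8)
The plan is to analyse smoothness in the local slice model of the cohomogeneity-one action near the singular orbit $S^3 = SU(2)^2/\Delta SU(2)$, following Eschenburg--Wang \cite{Eschenburg2000} as in Appendix \ref{app:R4xS3}. A tubular neighbourhood of the singular orbit is modelled on $SU(2)^2 \times_{\Delta SU(2)} \mathbb{H}$, where the isotropy group $\Delta SU(2) \cong SU(2)$ acts on the normal fibre $\mathbb{H} = \mathbb{R}^4$ by left quaternionic multiplication; the collapsing $3$-sphere is $\Delta SU(2)$ itself, whose left-invariant coframing is $\{\eta_1^+, \eta_2^+, \eta_3^+\}$ (dual to $\mathfrak{su}^+ = \Lie(\Delta SU(2))$), while $\{\eta_1^-, \eta_2^-, \eta_3^-\}$ are transverse, restricting to a coframing of the base $S^3$. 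I would first record, from the asymptotics of the Bryant--Salamon solution established in Appendix \ref{app:R4xS3}, that $A_1$ is an odd function of $t$ with $A_1 = \tfrac{t}{2} + O(t^3)$ and $B_1$ is an even function with $B_1 = b_0 + O(t^2)$ for some $b_0 \neq 0$ (consistent with \eqref{eq:metric}, which shows the $\eta^+$-directions collapse as $A_1 \to 0$).

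The two extensions $P_\lambda$ in \eqref{eq:Plambda} are distinguished by the isotropy homomorphism $\lambda \colon \Delta SU(2) \to SU(2)$ and its canonical invariant connection $A^c_\lambda$, the left-invariant translate of $d\lambda \oplus 0$. A connection extends smoothly over $P_\lambda$ precisely when $A - A^c_\lambda$ extends as a smooth tensorial $1$-form; since $d\lambda$ is supported on $\mathfrak{su}^+$, one has $A^c_{1} = 0$ and $A^c_{\id} = \sum_i T_i \otimes \eta_i^+$. I would treat the collapsing and transverse directions separately. For the collapsing directions, the basic smooth $\mathfrak{su}(2)$-valued form on $\mathbb{H}$ is $\sum_i T_i \otimes \theta_i$ with $\theta_i := t^2\eta_i^+ = \Im(\bar q \, dq)_i$, and a radial coefficient $f(t)$ makes $f\sum_i T_i\otimes\theta_i$ smooth iff $f$ is a smooth \emph{even} function of $t$. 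Writing the $\eta_i^+$-component of $A - A^c_\lambda$ as $(A_1 x - \delta_\lambda)\,\eta_i^+ = \tfrac{A_1 x - \delta_\lambda}{t^2}\,\theta_i$, with $\delta_{1}=0$ and $\delta_{\id}=1$, smoothness forces $A_1 x - \delta_\lambda$ to be even and $O(t^2)$. As $A_1$ is odd, this is equivalent to $x$ being odd with $x = x_1 t + x_3 t^3 + \cdots$ when $\lambda = 1$, and $x = \tfrac{2}{t} + x_1 t + \cdots$ (so that $A_1 x \to 1 = \delta_{\id}$) when $\lambda = \id$.

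For the transverse directions I would use the isotropy representation: under $\Delta SU(2)$ the forms $\eta_i^-$ span the adjoint (vector) representation $\mathfrak{su}^-$, while the fibre generators $T_i$ transform by $\Ad\circ\lambda$. Hence $\sum_i T_i\otimes \eta_i^-$ is $\Delta SU(2)$-invariant when $\lambda = \id$ (a pairing of two copies of the adjoint) but spans a nontrivial representation when $\lambda = 1$. Since a smooth section over the singular orbit must take values in the isotropy-fixed subspace there, the coefficient $B_1 y$ must vanish at $t=0$ when $\lambda = 1$ and may be nonzero when $\lambda = \id$; in both cases smoothness of the radial coefficient forces $B_1 y$ to be even, whence (as $B_1 = b_0 + O(t^2)$ is even and nonzero) $y$ is even, with $y = y_2 t^2 + \cdots$ for $\lambda = 1$ and $y = y_0 + y_2 t^2 + \cdots$ for $\lambda = \id$. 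Assembling the two pieces yields the stated Taylor expansions and identifies the bundle in each case. The step I expect to be most delicate is the transverse ($\eta^-$) analysis: one must argue carefully that, beyond the obvious radial-evenness condition, the only further constraint is the vanishing (or not) dictated by the triviality of $\Ad\circ\lambda$, and that no hidden condition is introduced by the twisting of $\eta_i^-$ over the collapsing fibre --- this is exactly where the Eschenburg--Wang criterion of Appendix \ref{app:R4xS3} (equivalently Lemma \ref{lem:Extend1formOnBS} in its $\mathfrak{g}$-valued form) does the real work.
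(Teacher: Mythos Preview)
Your proposal is correct and takes essentially the same approach as the paper: both reduce the question to the Eschenburg--Wang criterion of Appendix \ref{app:R4xS3}, with the paper invoking Lemma \ref{lem:ExtendConnectionOnBS} directly on $A - A^{\text{can}}$ (where $A^{\text{can}} = \sum_i T_i \otimes \eta_i^+$ for $\lambda = \id$ and $A^{\text{can}} = 0$ for $\lambda = 1$) while you sketch the underlying analysis by treating the collapsing $\eta_i^+$-directions and transverse $\eta_i^-$-directions separately. One minor correction: the relevant Appendix result is Lemma \ref{lem:ExtendConnectionOnBS} (the $\mathfrak{su}(2)$-valued case) rather than Lemma \ref{lem:Extend1formOnBS}, which only covers real-valued $1$-forms.
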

\begin{proof}
We only analyze the case $\lambda=\id$ in detail, as both situations are similar.
\\
 When $\lambda=\id$, the canonical invariant connection associated with the reductive splitting $\mathfrak{su}(2)^2= \mathfrak{su}^+(2) \oplus \mathfrak{su}^-(2)$ is 
\begin{equation}\label{eq:A.can}
A^{\text{can}}= \sum_{i=1}^3 T_i \otimes \eta_i^+ \in \Omega^1(SU(2)\times SU(2), \mathfrak{su}(2)).
\end{equation} Therefore, for  $A$ to extend over the singular orbit as a connection on $P_{\id}$ we need to apply Lemma \ref{lem:ExtendConnectionOnBS} in Appendix \ref{app:R4xS3} to the $1$-form
$$A-A^{\text{can}}= \left( A_1x - 1 \right)\left( \sum_{i=1}^3 T_i\otimes\eta_i^+ 
\right) + B_1y \left( \sum_{i=1}^3 T_i\otimes \eta_i^- 
\right).$$ 
We conclude that $A$ extends over the singular orbit $S^3$ if
\begin{itemize}
\item $A_1(t)x(t)$, $B_1(t)y(t)$ are both even,
\item $\lim_{t \rightarrow 0} A_1(t)x(t)=1$ and $\lim_{t\to 0}B_1(t)y(t)$ is finite.
\end{itemize}
By Lemma \ref{lem:ExtendingSmoothlyMetric} in Appendix \ref{app:R4xS3} (or by inspection since the BS 
metric is explicit), we see that $A_1(t)$ is odd and $B_1(t)$ is even, so $x(t)$ and $y(t)$ must be odd and even respectively.  Moreover, $\dot{A}_1(0)=\frac{1}{2}$ and $B_1(0)\neq 0$, 
 as we see in Example \ref{ex:BS} in Appendix \ref{app:R4xS3}, 
so the expansions of $x$, $y$ around zero must be as claimed in the lemma.\\
To carry over the analysis in the case where $\lambda=1$ we apply Lemma \ref{lem:ExtendConnectionOnBS} directly to the $1$-form $A$, giving $A_1x$, $B_1y$ are even with $\lim_{t\to 0}A_1x=\lim_{t\to0}B_1y=0$. 
\end{proof}

\subsection{Solutions and their properties}\label{ss:BSsols}

We now describe solutions of the $SU(2)^3$-invariant $G_2$-instanton equations, which splits into two cases: when the bundle $P=P_1$ and when $P=P_{\id}$, in the notation of the previous subsection.  In the first case we recover the $G_2$-instantons constructed in \cite{Clarke14}, and in the second case we find a new example of a $G_2$-instanton.  We then analyse the asymptotic behaviour of the instantons, and finally show that the $\mathbb{R}_{\geq 0}$-family of solutions on $P_1$ admits a natural compactification.

\subsubsection[\texorpdfstring{Solutions smoothly extending on $P_1$}{Solutions smoothly extend on P1}]{Solutions smoothly extending on {\boldmath $P_1$}}

Clarke \cite{Clarke14} constructed a 1-parameter family of $G_2$-instantons on the Bryant--Salamon $\mathbb{R}^4\times S^3$.  These instantons live on the bundle $P_1$ given by \eqref{eq:Plambda}, i.e.~when the homomorphism $\lambda$ is trivial.  Moreover, they have $y=0$ in the notation of Proposition \ref{prop:BSODEs}, and so the ODEs there reduce to a single ODE for $x$ which can be explicitly integrated.  We shall reconstruct these $G_2$-instantons in the proof of the next result, 
 which classifies and explicitly describes the $G_2$-instantons that smoothly extend over the singular orbit on the bundle $P_1$.

\begin{theorem}\label{prop:Clarke}\label{thm:Clarke}
Let $A$ be an $SU(2)^3$-invariant $G_2$-instanton with gauge group $SU(2)$ on the Bryant--Salamon $G_2$-manifold $\mathbb{R}^4\times S^3$, which smoothly extends over the singular orbit on $P_{1}$. Then, $A$ is one of Clarke's examples \cite{Clarke14}, in which case there is $x_1 \in \mathbb{R}$ such that, in 
the notation of Proposition \ref{prop:BSODEs},
\begin{equation}\label{eq:ClarkeG2InstantonFormula}
x(t)=\frac{2x_1 A_1(t)}{1+ x_1 (B_1^2(t)-\frac{1}{3})  
}\quad\text{and}\quad y(t)=0.
\end{equation}
 Given such an $x_1 \in \mathbb{R}$ we shall denote the resulting instanton by $A^{x_1}$.  Observe that $A^{x_1}$ is defined globally on $\mathbb{R}^4\times S^3$ if and only if $x_1\geq 0$ and that $A^0$ is the trivial flat connection.
\end{theorem}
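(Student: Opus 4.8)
The plan is to first show that smooth extension over $P_1$ forces $y\equiv 0$, after which equation \eqref{eq:Clarke1} integrates explicitly.  The crucial observation is that, by Proposition \ref{prop:BSODEs}, equation \eqref{eq:Clarke2} is \emph{linear and homogeneous} in $y$: writing it as $\dot y=g(t)\,y$ with $g=\frac{2\dot A_1-3}{A_1}+2x$, its solution space is one-dimensional.  Using \eqref{eq:ODEsMetricBryantSalamon} together with the behaviour of the Bryant--Salamon metric near the singular orbit (where $A_1(t)=\tfrac{t}{2}+O(t^3)$, since $A_1(0)=0$ and $\dot A_1(0)=\tfrac12$, as recorded in Appendix \ref{app:R4xS3}), I would compute $g(t)=-\tfrac{4}{t}+O(t)$ as $t\to 0$.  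Hence, for $t<t_0$, any solution satisfies $y(t)=y(t_0)\exp\!\left(\int_{t_0}^t g\right)$ with $\int_{t_0}^t g\to+\infty$, so every nonzero solution blows up like $t^{-4}$ at the singular orbit.  This is incompatible with the requirement from Lemma \ref{lem:SmoothlyExtendBS} that, for extension over $P_1$, the function $y$ be even with $y(t)=y_2t^2+\dots$; in particular $y$ must be bounded.  Therefore $y\equiv 0$.  (That an instanton extending smoothly over the singular orbit produces genuine solutions with the stated vanishing orders rests on the singular initial value problem analysis of \cite{Malgrange1974} underlying Appendix \ref{app:R4xS3}.)

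With $y\equiv 0$, equation \eqref{eq:Clarke1} becomes the Bernoulli equation $\dot x=\frac{\dot A_1}{A_1}x-x^2$.  Excluding the trivial solution $x\equiv 0$ (on $t>0$, uniqueness shows $x$ is then nowhere zero), I would substitute $u=A_1/x$, which transforms this into the elementary equation $\dot u=A_1$.  Since $\dot B_1=A_1/B_1$ by \eqref{eq:ODEsMetricBryantSalamon} gives $A_1=\tfrac12\frac{d}{dt}(B_1^2)$, we obtain $u=\tfrac12 B_1^2+C$ for a constant $C$, hence $x=\frac{2A_1}{B_1^2+2C}$.  To match the form \eqref{eq:ClarkeG2InstantonFormula} I would fix $C$ from the initial data: near $t=0$ one has $x(t)=x_1t+\dots$, $A_1(t)=\tfrac{t}{2}+\dots$, and $B_1^2(t)=\tfrac13+O(t^2)$, because $B_1=r/\sqrt3$ with $r=1$ on the singular orbit by \eqref{eq:AB.BS}.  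Comparing leading coefficients yields $2C=\tfrac{1}{x_1}-\tfrac13$ for $x_1\neq 0$, and substituting back reproduces exactly \eqref{eq:ClarkeG2InstantonFormula}; the case $x\equiv 0$ is $x_1=0$.  This simultaneously exhibits every such instanton as one of Clarke's examples.

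For the global existence statement I would examine the denominator $1+x_1(B_1^2-\tfrac13)$ in \eqref{eq:ClarkeG2InstantonFormula}.  Using $B_1=r/\sqrt3$ with $r\in[1,+\infty)$, the quantity $B_1^2-\tfrac13=\tfrac{r^2-1}{3}$ ranges over $[0,+\infty)$; thus the denominator stays positive (indeed $\geq 1$) for all $r$ precisely when $x_1\geq 0$, whereas for $x_1<0$ it vanishes at a finite value of $r$, where $x$ blows up.  Hence $A^{x_1}$ extends to all of $\mathbb{R}^4\times S^3$ if and only if $x_1\geq 0$.  Finally $x_1=0$ forces $x\equiv y\equiv 0$, so the connection vanishes in temporal gauge and $A^0$ is the trivial flat connection.

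The main obstacle is the first step, namely ruling out nonzero $y$: one must rigorously tie \emph{smooth extension over $P_1$} to the boundedness (and vanishing order) of $y$ at the singular orbit, so that the $t^{-4}$ growth mode of the homogeneous linear equation \eqref{eq:Clarke2} is genuinely excluded.  Everything after that is routine integration of a Bernoulli equation and elementary analysis of the resulting explicit formula.
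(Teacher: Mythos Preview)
Your proof is correct and takes a genuinely different route from the paper's. The paper recasts the system as a regular singular initial value problem for auxiliary functions $(u,v)$ with $x=x_1t+t^3u$, $y=t^2v$, and invokes the Malgrange existence--uniqueness theorem: for each $x_1$ there is a \emph{unique} local solution with the required expansions, and since the explicit $y\equiv 0$ solution \eqref{eq:ClarkeG2InstantonFormula} already has those expansions, uniqueness forces every solution to coincide with it. You instead exploit directly that \eqref{eq:Clarke2} is linear homogeneous in $y$: since smooth extension on $P_1$ forces $x=O(t)$ (Lemma \ref{lem:SmoothlyExtendBS}), the coefficient $g(t)=\frac{2\dot A_1-3}{A_1}+2x$ behaves like $-4/t$, so the unique bounded solution is $y\equiv 0$.

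Your argument is more elementary---it needs only the \emph{necessity} of the Taylor conditions in Lemma \ref{lem:SmoothlyExtendBS} (which Eschenburg--Wang gives) rather than the full singular-IVP machinery---and it isolates exactly why $y$ must vanish. The paper's approach, on the other hand, is more systematic: it works uniformly without spotting that one equation is linear in one unknown, and the same template is reused for the $P_{\id}$ case (Proposition \ref{prop:LocalExistenceBS_1}) and the $SU(2)^2\times U(1)$-invariant setting (Propositions \ref{prop:LocalG2Instantons} and \ref{prop:LocalG2Instantons2}). Your parenthetical remark invoking \cite{Malgrange1974} is unnecessary: you only need Lemma \ref{lem:SmoothlyExtendBS} to be a necessary condition, which is what Eschenburg--Wang actually provides. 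After $y\equiv 0$, your integration of the Bernoulli equation and the global-existence analysis agree with the paper.
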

\begin{proof}
It will be enough to show that any instanton as in the statement defined on a neighbourhood of the singular orbit must coincide with one of Clarke's examples there. For that, let $(x(t),y(t))$ be a solution to the ODEs \eqref{eq:Clarke1}-\eqref{eq:Clarke2}.  We  show that if the resulting instanton $A$ extends over the singular orbit then $y(t)=0$ for all $t$.\\ Recall from Lemma \ref{lem:SmoothlyExtendBS} that for $A$ to smoothly extend over the singular orbit on $P_1$ 
we must have
$$x(t) = x_1 t + t^3 u(t)\quad\text{and}\quad y(t) =  t^2 v(t)$$
for $t$ near $0$, where $u$, $v$ are real analytic even functions of $t$.
The system \eqref{eq:Clarke1}-\eqref{eq:Clarke2} for $x$, $y$ becomes the following system for $u$, $v$:
\begin{eqnarray} 
\label{eq:dotu1} \dot{u} & = & - \frac{2u + x_1^2 + x_1/2 }{t} + f_1(t,u,v), \\  
\label{eq:dotv1} \dot{v} & = & - \frac{ 6v}{t} + f_2(t,u,v),
\end{eqnarray}
where $f_1,f_2: [0,+\infty) \times \mathbb{R}^2 \rightarrow \mathbb{R}$ are some other real analytic functions. The existence and uniqueness theorem for equations with regular singular points (see \cite{Malgrange1974} chapters 6 and 7, and Theorem 4.7 in  \cite{Foscolo2015} for a clearer statement) applies here provided that  
$$u(0)=-\frac{x_1}{4}-\frac{x_1^2}{2}\quad\text{and}\quad v(0)=0.$$ In that case, for each $x_1 \in \mathbb{R}$ we obtain a unique solution $(x(t),y(t))$ in $[0, \epsilon) $, for some $\epsilon>0$.\\  
We are left with showing that all such solutions have $y=0$. That is indeed the case as we can simply set $y=0$ and integrate the equation for $x$:
\begin{equation}\label{eq:dotxBS}
\dot{x}=\frac{\dot{A}_1}{A_1} x-x^2.
\end{equation}
Writing this equation as 
\begin{equation}\label{eq:dotxBS2}
\frac{d}{dt}\left( \frac{x}{A_1} \right) = -A_1 \left( \frac{x}{A_1} \right)^2
\end{equation} makes it separable.  Since $B_1\dot{B}_1=A_1$ by \eqref{eq:ODEsMetricBryantSalamon} and $B_1^2(0)=\frac{1}{3}$, \eqref{eq:dotxBS2} can be readily integrated to show that $x$ is given as in \eqref{eq:ClarkeG2InstantonFormula}. 
By uniqueness the solutions guaranteed by the local existence theorem must be these ones and so have $y=0$. These are   the $G_2$-instantons found 
in \cite{Clarke14}.
\end{proof}

\noindent Using the implicit coordinate $r\in[1,+\infty)$ in \eqref{eq:r.BS} and the formula \eqref{eq:AB.BS} 
we can explicitly write the $G_2$-instanton $A^{x_1}=A_1xT_i\otimes\eta_i^+$ with
$$A_1(r)=\frac{r}{3}\sqrt{1-r^{-3}}\quad\text{and}\quad
x(r)=\frac{2x_1r\sqrt{1-r^{-3}}}{3+x_1(r^2-1)}.$$
We see that the curvature of $A^{x_1}$ is
\begin{align*}
F_{A^{x_1}} & =  T_i\otimes \Big(\frac{d}{dr}(A_1x)dr\wedge \eta_i^++
A_1x(A_1x-1)\epsilon_{ijk}\eta_j^+\wedge\eta_k^+\\
&\qquad\qquad  - A_1 x \epsilon_{ijk}\eta_j^-\wedge\eta_k^- \Big).
\end{align*}
This can then be used to compute that
\begin{equation}\label{eq:Norm_of_curvature}
|F_{A^{x_1}}|^2= \frac{9}{2B_1^2} \Big\vert \frac{d}{dr}(A_1 x) \Big\vert^2 + \frac{3x^2}{2} \frac{(A_1 x-1)^2}{A_1^2} + \frac{3A_1^2 x^2}{2 B_1^2},
\end{equation} 
which shows that $|F_{A^{x_1}}|$ decays at infinity at $O(r^{-2})$.  Observe in particular that the curvature of $A^{x_1}$ does not lie in $L^2$.

\subsubsection[\texorpdfstring{Solutions smoothly extending on $P_{\id}$}{Solutions smoothly extending on Pid}]{Solutions smoothly extending on  {\boldmath $P_{\id}$}}

We now turn to solutions defined on the bundle $P_{\id}$ given by \eqref{eq:Plambda} with the homomorphism $\lambda=\id$.  We first give a local existence result for instantons on $P_{\id}$.

\begin{proposition}\label{prop:LocalExistenceBS_1}
Let $S^3$ be the singular orbit in the Bryant-Salamon $G_2$-manifold $\mathbb{R}^4\times S^3$. There is a one-parameter family of $SU(2)^3$-invariant $G_2$-instantons, with gauge group $SU(2)$, defined in a neighbourhood of $S^3$ and smoothly extending over $S^3$ on $P_{\id}$. The instantons are parametrized by $y_0 \in \mathbb{R}$ and satisfy, in the notation of Proposition \ref{prop:BSODEs},
$$x(t) = \frac{2}{t} + \frac{y_0^2-1}{4} t + O(t^3) , \quad y(t) = y_0 + \frac{y_0}{2} \left( \frac{y_0^2}{2} - 3  \right) t^2 + O(t^4) .$$
\end{proposition}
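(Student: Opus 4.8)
The plan is to mimic the proof of Theorem~\ref{prop:Clarke}, but now with the pole in $x$ dictated by the extension condition on $P_{\id}$. By Lemma~\ref{lem:SmoothlyExtendBS}, a solution of \eqref{eq:Clarke1}--\eqref{eq:Clarke2} extends smoothly over the singular orbit on $P_{\id}$ precisely when $x$ is odd with $x(t)=\tfrac{2}{t}+x_1 t+\ldots$ and $y$ is even with $y(t)=y_0+y_2t^2+\ldots$. Accordingly, I would substitute
\begin{equation*}
x(t)=\frac{2}{t}+t\,u(t),\qquad y(t)=y_0+t^2 v(t),
\end{equation*}
for unknown analytic functions $u,v$, and reduce \eqref{eq:Clarke1}--\eqref{eq:Clarke2} to a singular initial value problem of the type solved by the existence and uniqueness theorem for regular singular points (\cite{Malgrange1974}, and Theorem~4.7 in \cite{Foscolo2015}).

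To carry this out, first I would record the Taylor expansions of the Bryant--Salamon metric functions near $t=0$. From \eqref{eq:ODEsMetricBryantSalamon} together with $B_1(0)^2=\tfrac13$ and $\dot A_1(0)=\tfrac12$ (see Example~\ref{ex:BS}) one gets $A_1=\tfrac{t}{2}+O(t^3)$, with $A_1$ odd and $B_1$ even, whence
\begin{equation*}
\frac{\dot A_1}{A_1}=\frac{1}{t}-\frac{t}{2}+O(t^3),\qquad
\frac{2\dot A_1-3}{A_1}=-\frac{4}{t}-\frac{5t}{2}+O(t^3).
\end{equation*}
Inserting the ansatz and these expansions into \eqref{eq:Clarke1}--\eqref{eq:Clarke2}, the $t^{-2}$ terms in the $x$-equation and the $t^{-1}$ terms in the $y$-equation cancel, and after dividing the $y$-equation by $t$ one is left with the system
\begin{align*}
t\dot u &= -4u+(y_0^2-1)+t\,F_1(t,u,v),\\
t\dot v &= -2v+2y_0 u-\tfrac{5}{2}y_0+t\,F_2(t,u,v),
\end{align*}
with $F_1,F_2$ analytic. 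The linear part is lower triangular with indicial roots $-4$ and $-2$.

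Since neither indicial root is a positive integer, the cited theorem applies after shifting to the constant fixed point: cancellation of the $t^0$ terms forces $u(0)=\tfrac{y_0^2-1}{4}$ and $v(0)=y_0u(0)-\tfrac{5}{4}y_0=\tfrac{y_0}{2}\big(\tfrac{y_0^2}{2}-3\big)$, and then there is a unique analytic solution $(u,v)$ on $[0,\epsilon)$. Reading off $x(t)=\tfrac{2}{t}+u(0)\,t+O(t^3)$ and $y(t)=y_0+v(0)\,t^2+O(t^4)$ reproduces the stated expansions, and letting $y_0$ range over $\mathbb{R}$ yields the asserted one-parameter family. To confirm that these solutions genuinely extend over $P_{\id}$ via Lemma~\ref{lem:SmoothlyExtendBS} I would check the required parities: since $\tfrac{\dot A_1}{A_1}$ and $\tfrac{2\dot A_1-3}{A_1}$ are odd, the map $(x,y)\mapsto(-x(-t),y(-t))$ preserves \eqref{eq:Clarke1}--\eqref{eq:Clarke2} together with the leading data at $t=0$, so uniqueness forces $x$ odd and $y$ even. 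The constraint of Lemma~\ref{lem:ODE} is automatically satisfied in this $SU(2)^3$-symmetric setting, as already noted in the proof of Proposition~\ref{prop:BSODEs}.

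The main obstacle I anticipate is the bookkeeping in the second step: one must expand the singular coefficients to the correct order to see both that the indicial roots are exactly $-4$ and $-2$ (so that non-resonance with the positive integers holds) and that the $t^0$ terms are precisely $(y_0^2-1)$ and $2y_0u-\tfrac52 y_0$, since these fix $u(0)$ and $v(0)$ and hence the claimed coefficients. Everything else is a routine application of the regular-singular existence theorem together with the symmetry argument.
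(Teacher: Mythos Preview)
Your proposal is correct and follows essentially the same approach as the paper's proof: the same substitution $x=\tfrac{2}{t}+tu$, $y=y_0+t^2v$, the same reduction to a regular singular initial value problem with linearisation eigenvalues $-4,-2$, and the same appeal to \cite{Malgrange1974}/\cite{Foscolo2015} to produce the unique analytic solution with the stated $u(0),v(0)$. Your explicit parity argument via the symmetry $(x,y)\mapsto(-x(-t),y(-t))$ is a useful elaboration that the paper leaves implicit.
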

\begin{proof}
We consider the initial value problem for $(x(t),y(t))$ to be a solution to the ODEs \eqref{eq:Clarke1}-\eqref{eq:Clarke2} on $P_{\id}$. 
 By Lemma \ref{lem:SmoothlyExtendBS}, 
the conditions for smooth extension over the singular orbit are that
$$x(t) = \frac{2}{t} + t u(t) , \quad y(t) = y_0 + t^2 v(t),$$
for some real analytic functions $u$, $v: [0, + \infty ) \rightarrow \mathbb{R}$. 
Substituting these expressions and 
the expansion of $A_1$ from Example \ref{ex:BS} into \eqref{eq:Clarke1}-\eqref{eq:Clarke2} yields 
\begin{eqnarray}\label{eq:dotu2} 
\dot{u} & = & \frac{y_0^2-4u-1}{t} + f_1(t,u,v) \\\label{eq:dotv2}  
\dot{v} & = & - \frac{2v+5y_0/2-2y_0 u}{t} + f_2(t,u,v),
\end{eqnarray}
where $f_1,f_2: [0,+\infty) \times \mathbb{R}^2 \rightarrow \mathbb{R}$ are two real analytic functions up to $t=0$.  
Now we use the existence and uniqueness theorem for equations with regular singular points (chapters 6 and 7 in \cite{Malgrange1974}, or Theorem 4.7 \cite{Foscolo2015}). At this stage, this requires that $(u(0),v(0))$ are such that the $O(t^{-1})$ terms in \eqref{eq:dotu2}-\eqref{eq:dotv2} vanish and that the linear map $(u,v) \mapsto (-4 u , 2 y_0 u -2v)$ has no eigenvalues in the positive integers. The second condition holds (the eigenvalues are $-2,-4$) and the first condition requires that
$$u(0)=\frac{y_0^2-1}{4} , \quad v(0) = \frac{y_0}{2} \left( \frac{y_0^2}{2} - 3  \right).$$
The theorem for equations with regular singular points applies and shows that, under these conditions, for each $y_0 \in \mathbb{R}$ there is a unique solution $(u(t),v(t))$ to \eqref{eq:dotu2}-\eqref{eq:dotv2}, which gives the result. 
\end{proof}

\begin{theorem}\label{thm:Alim}
The $G_2$-instanton $A^{\lim}$ arising from the case when $y_0=0$ in Proposition \ref{prop:LocalExistenceBS_1} has
$$x(t)= \frac{A_1(t)}{\frac{1}{2}(B_1^2(t)-\frac{1}{3}) 
}\quad\text{and}\quad y(t)=0.$$ 
Moreover, $A^{\lim}$ extends as a $G_2$-instanton to the Bryant--Salamon   $G_2$-manifold 
 $\mathbb{R}^4\times S^3 $.
\end{theorem}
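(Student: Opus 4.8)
The plan is to exploit the linear structure of the $y$-equation. Equation~\eqref{eq:Clarke2} is linear and homogeneous in $y$, reading $\dot y=\bigl(\tfrac{2\dot A_1-3}{A_1}+2x\bigr)y$, so for \emph{any} solution $x$ of the reduced equation $\dot x=\tfrac{\dot A_1}{A_1}x-x^2$ the pair $(x,0)$ automatically solves the full system \eqref{eq:Clarke1}--\eqref{eq:Clarke2}. I will produce such an $x$ realising the $y_0=0$ initial data of Proposition~\ref{prop:LocalExistenceBS_1}; by the uniqueness part of the regular singular point theorem this pair is then \emph{the} solution $A^{\lim}$, which in particular has $y\equiv 0$.

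To find $x$, I integrate the reduced equation, which is the separable equation~\eqref{eq:dotxBS} already met in the proof of Theorem~\ref{prop:Clarke}; writing it as $\tfrac{d}{dt}(x/A_1)=-A_1(x/A_1)^2$ and using $A_1=\tfrac12\tfrac{d}{dt}(B_1^2)$ (from $B_1\dot B_1=A_1$ in \eqref{eq:ODEsMetricBryantSalamon}) gives $x/A_1=2/(B_1^2+2C)$ for a constant $C$. Inserting the near-$t=0$ expansions $A_1(t)=\tfrac{t}{2}+O(t^3)$ and $B_1^2(t)=\tfrac13+\tfrac{t^2}{2}+O(t^4)$ (the latter from $B_1^2(0)=\tfrac13$ and $\tfrac{d}{dt}(B_1^2)=2A_1$), the $P_{\id}$ requirement $x(t)=\tfrac{2}{t}+O(t)$ from Lemma~\ref{lem:SmoothlyExtendBS} forces the denominator to vanish at $t=0$, i.e. $B_1^2(0)+2C=0$, whence $2C=-\tfrac13$. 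This yields exactly $x=A_1/\bigl[\tfrac12(B_1^2-\tfrac13)\bigr]$, which indeed behaves like $2/t$ near $0$, establishing the stated formula.

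For the global statement I would substitute the explicit Bryant--Salamon solution \eqref{eq:AB.BS}, namely $A_1=\tfrac{r}{3}\sqrt{1-r^{-3}}$ and $B_1=r/\sqrt3$, to obtain $x=2r\sqrt{1-r^{-3}}/(r^2-1)$. Since $B_1^2-\tfrac13=(r^2-1)/3>0$ for every $r\in(1,+\infty)$, the coefficient $x$ is smooth and finite off the singular orbit, with $A_1x\to\tfrac23$ as $r\to\infty$, so there is no obstruction in the asymptotically conical region. Smooth extension across the singular orbit $\{0\}\times S^3$ on $P_{\id}$ is precisely Proposition~\ref{prop:LocalExistenceBS_1} with $y_0=0$, so $A^{\lim}$ is a genuine $G_2$-instanton on all of $\mathbb{R}^4\times S^3$. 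The only point requiring care---rather than a genuine obstacle---is that $x$ itself \emph{appears} to blow up at the singular orbit because $B_1^2\to\tfrac13$ there; this apparent pole is exactly the $2/t$ singularity that is \emph{needed} for smoothness of the connection $A$ on the nontrivial bundle $P_{\id}$, via the canonical-connection subtraction in Lemma~\ref{lem:SmoothlyExtendBS}, so global regularity must be read off $A$ and not off the scalar $x$.
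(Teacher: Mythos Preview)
Your proof is correct and follows essentially the same route as the paper: set $y\equiv 0$ (using either linearity of the $y$-equation or uniqueness from Proposition~\ref{prop:LocalExistenceBS_1}), integrate the separable reduced equation~\eqref{eq:dotxBS2} for $x$, fix the constant via the $P_{\id}$ condition $A_1 x\to 1$, and observe that the resulting $x$ is defined for all $t$. Your write-up is somewhat more explicit about determining the integration constant and checking global smoothness in the $r$-coordinate, but the argument is the same.
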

\begin{proof}
Back to the functions $x$, $y$ in Proposition \ref{prop:LocalExistenceBS_1}, we have that $y=0$ and $x$ is the unique solution to 
$$\dot{x}=\frac{\dot{A}_1}{A_1} x-x^2, \quad \lim_{t \rightarrow 0} A_1(t) x(t)=1.$$
Writing the ODE in the form \eqref{eq:dotxBS2} makes it separable, and using the initial condition we obtain the solution claimed.  Since $x(t)$ is defined for all $t$, the resulting instanton is globally defined.
\end{proof}

\noindent Again using the coordinate $r\in[1,\infty)$ in \eqref{eq:r.BS} and the formula \eqref{eq:AB.BS} 
we can write $A^{\lim}$ explicitly with 
$$x(r)=\frac{2r\sqrt{1-r^{-3}}}{r^2-1}.$$
From \ref{eq:Norm_of_curvature} we see that the curvature of $A^{\lim}$ decays at infinity at order $O(r^{-2})$, just as for $A^{x_1}$.

\begin{remark}
The reader may wonder about potential $G_2$-instantons $A$ arising from the
 local solutions with $y_0\neq 0$ in Proposition
 \ref{prop:LocalExistenceBS_1}.   Numerical investigation appears to
  indicate that such local solutions do not extend globally, if we impose 
 the condition that the curvature of $A$ decays at infinity.  We hope to study this situation further.
\end{remark}

\subsubsection{Asymptotics of the solutions}

We now consider the asymptotic behaviour of the $G_2$-instantons $A^{x_1}$ and $A^{\lim}$ constructed in Theorems \ref{thm:Clarke} and \ref{thm:Alim}.\\
Using the formula \eqref{eq:ClarkeG2InstantonFormula} for Clarke's $G_2$-instanton $A^{x_1}$ we see that for $x_1>0$ and large $t$, the connection form $a(t)$ on the time $t$ slice of $\mathbb{R}^3\times S^3$, which is diffeomorphic to $S^3\times S^3$, is given by
\begin{eqnarray}\nonumber
a(t) & = & \frac{2x_1 A_1^2(t)}{1+ x_1 (B_1^2(t)-\frac{1}{3}) 
}  \sum_{i=1}^3 T_i\otimes\eta_i^+   \sim \frac{2x_1 \frac{t^2}{9}}{1+ 2x_1 \frac{t^2}{6} }   \sum_{i=1}^3 T_i\otimes\eta_i^+  \\  \nonumber
& \sim & \frac{2}{3} \frac{1}{1+\frac{3}{x_1 t^2}}\sum_{i=1}^3 T_i\otimes\eta_i^+ , 
\end{eqnarray}
where we used the asymptotic behaviour of $A_1$, namely that $A_1(t) \sim \frac{t}{3} + O(t^{-2})$ and $B_1(t)\sim \frac{t}{\sqrt{3}}$ for $t$ large. Therefore
\begin{equation}\label{eq:ainfty}
a_{\infty} := \lim_{t \rightarrow +\infty}a(t)=\frac{2}{3}\sum_{i=1}^3 T_i\otimes\eta_i^+ 
\end{equation}
is the canonical $SU(2) \subset SU(3)$ connection for the homogeneous nearly K\"ahler structure on $S^3 \times S^3$.  Recall that such connections are pseudo-Hermitian--Yang--Mills (or nearly K\"ahler instantons). We can also compute the rate at which this happens and conclude that there is $c>0$ such that $\vert a - a_{\infty} \vert \leq \frac{c}{\vert x_1 \vert t^3}$ along the end.\\ Similarly, we compute that for $t \gg 1$
\begin{eqnarray*}
A^{\lim} & = & \frac{A_1^2(t)}{\frac{1}{2}(B_1^2(t)-\frac{1}{3}) 
} \sum_{i=1}^3 T_i\otimes \eta_i^+  = \frac{(t/3 + O(t^{-2}))^2}{t^2/6 + O(t^{-1})} \sum_{i=1}^3 T_i\otimes \eta_i^+ \\
&=& \frac{2}{3} ( 1+ O(t^{-3}) ) \sum_{i=1}^3T_i\otimes \eta_i^+ .
\end{eqnarray*}
Thus, $\vert A^{\lim} - a_{\infty} \vert = O(t^{-4})$, as $\vert \eta_i^+ \vert = O(t^{-1})$. We summarize these conclusions. 

\begin{proposition}\label{prop:asym}
Let $A$ be an $SU(2)^3$-invariant $G_2$-instanton given by Theorem \ref{thm:Clarke} or \ref{thm:Alim} which is defined globally on the Bryant--Salamon $G_2$-manifold $\mathbb{R}^4\times S^3$.  Then $A$ is asymptotic to the canonical pseudo-Hermitian--Yang--Mills connection $a_{\infty}$ in \eqref{eq:ainfty} for the homogeneous nearly K\"ahler structure on $S^3 \times S^3$. In particular:
\begin{itemize}
\item if 
$A=A^{x_1}$ for some $x_1 \in \mathbb{R}^+$, then for $t \gg 1$
$$\vert A^{x_1} - a_{\infty} \vert \leq \frac{c}{ x_1  t^3},$$
where $c>0$ is some constant independent of $x_1$;
\item if $A=A^{\lim}$, then for $t \gg 1$, $\vert A^{\lim} - a_{\infty} \vert = O(t^{-4})$. 
\end{itemize}
\end{proposition}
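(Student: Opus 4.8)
The plan is to start from the explicit closed forms for $A^{x_1}$ and $A^{\lim}$ furnished by Theorems \ref{thm:Clarke} and \ref{thm:Alim} and simply feed in the large-$t$ asymptotics of the metric coefficients. From \eqref{eq:AB.BS} one has $A_1(t)=\frac{t}{3}+O(t^{-2})$ and $B_1(t)=\frac{t}{\sqrt 3}+O(t^{-2})$; since the connection on the slice $\{t\}\times(S^3\times S^3)$ is $a(t)=A_1(t)x(t)\sum_{i=1}^3 T_i\otimes\eta_i^+$, the whole problem collapses to the scalar asymptotic analysis of the single coefficient $A_1 x$ in each case.

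First I would treat Clarke's family. Substituting \eqref{eq:ClarkeG2InstantonFormula} gives $A_1 x=\frac{2x_1 A_1^2}{1+x_1(B_1^2-\frac13)}$; inserting the metric asymptotics shows $A_1 x\to\frac23$, whence $a_{\infty}=\frac23\sum_i T_i\otimes\eta_i^+$, and expanding this rational function in $t$ yields a leading error of size $O((x_1 t^2)^{-1})$ in the coefficient. The one point requiring care is that the pointwise norm is taken in $g_t$, for which $|\eta_i^+|_t=\frac{1}{2A_i}=O(t^{-1})$; this extra factor upgrades the coefficient estimate to $|a-a_{\infty}|\le c\,(x_1 t^3)^{-1}$. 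For $A^{\lim}$ the identical scheme applied to $A_1 x=\frac{A_1^2}{\frac12(B_1^2-\frac13)}$ gives $A_1 x=\frac23\bigl(1+O(t^{-3})\bigr)$, and the same $t^{-1}$ from $|\eta_i^+|_t$ produces $|A^{\lim}-a_{\infty}|=O(t^{-4})$.

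It then remains to identify the limit $a_{\infty}=\frac23\sum_i T_i\otimes\eta_i^+$ with the canonical pseudo-Hermitian--Yang--Mills (nearly K\"ahler instanton) connection on $S^3\times S^3$. This is precisely the limiting connection predicted by the asymptotically conical discussion at the end of \S\ref{sss:BS} (cf.\ Proposition 3 in \cite{Oliveira2014}); alternatively one verifies directly that the curvature of $a_{\infty}$, computed from Lemma \ref{lem:Curvature} with $a_i^+=\frac23$ and $a_i^-=0$, satisfies $F_{a_{\infty}}\wedge\omega_{\infty}^2=0$ and $F_{a_{\infty}}\wedge\Omega_{2,\infty}=0$ for the conical nearly K\"ahler structure.

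The main obstacle is the uniformity in the Clarke case: one must expand the rational function $A_1 x$ uniformly in both $t$ and the parameter $x_1$ and check that the leading error coefficient genuinely carries a factor $x_1^{-1}$, so that the stated bound holds with a single constant $c$ independent of $x_1$. Once this bookkeeping is in place, the remaining steps are routine substitution of the metric asymptotics.
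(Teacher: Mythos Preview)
Your proposal is correct and follows essentially the same approach as the paper: both start from the explicit formulae for $A^{x_1}$ and $A^{\lim}$, substitute the large-$t$ asymptotics $A_1(t)\sim t/3$, $B_1(t)\sim t/\sqrt{3}$ to show the coefficient $A_1 x$ converges to $2/3$, and then use $|\eta_i^+|_t=O(t^{-1})$ to obtain the stated decay rates. Your remark about checking uniformity in $x_1$ is a fair point of care, and your suggestion to verify directly that $a_\infty$ is pseudo-HYM via Lemma \ref{lem:Curvature} is slightly more explicit than the paper, which simply asserts this identification.
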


\begin{remark}
As previously mentioned, any $G_2$-instanton on an asymptotically conical 
$G_2$-manifold which has a well-defined limit at infinity and has pointwise decaying curvature will be asymptotic to a pseudo-HYM connection on the link of the asymptotic cone (\cite{Oliveira2014}).  Proposition 
\ref{prop:asym} refines this result in this setting.
\end{remark}

\subsubsection{Compactness properties of the moduli of solutions}

Next we show that as $x_1 \rightarrow + \infty$ Clarke's $G_2$-instantons $A^{x_1}$ ``bubble off'' an anti-self-dual (ASD) connection along the normal bundle to the associative $S^3= \{0\}\times S^3 \subset \mathbb{R}^4\times S^3$. We shall also show that in the same limit Clarke's $G_2$-instantons converge outside the associative $S^3$ to $A^{\lim}$. The fact that $A^{\lim}$ smoothly extends over $S^3$ can then be interpreted as a removable singularity phenomenon.\\ To state the result we now introduce some notation for the re-scaling we wish to perform: for $p \in S^3$ and $\delta >0$ we define the map $s^p_{\delta}$ from the unit ball $B_1\subseteq \mathbb{R}^4$ by
$$s^p_{\delta} : B_1\subseteq \mathbb{R}^4 \rightarrow B_{\delta}\times\{p\}\subseteq \mathbb{R}^4\times S^3, \ \ x \mapsto (\delta x,p).$$
Recall that if we view $\mathbb{R}^4\setminus\{0\}=\mathbb{R}^+_t\times S^3$ then the basic ASD instanton on $\mathbb{R}^4$ with scale $\lambda>0$ can be written as
\begin{equation}\label{eq:basic.ASD}
A^{\text{ASD}}_{\lambda}= \frac{\lambda t^2}{1+\lambda t^2} \sum_{i=1}^3T_i\otimes\eta_i^+ .
\end{equation}

\begin{theorem}\label{thm:Compactness}
Let $\lbrace A^{x_1} \rbrace$ be a sequence of Clarke's $G_2$-instantons from Theorem \ref{thm:Clarke} with $x_1 \rightarrow + \infty$. 
\begin{itemize}
\item[(a)] Given any $\lambda>0$, there is a sequence of positive real numbers $\delta=\delta(x_1,\lambda) \rightarrow 0$ as $x_{1} \rightarrow + \infty$ such that: for all $p \in S^3$, $(s^p_{\delta})^* A^{x_1}$ converges uniformly with all derivatives to the basic ASD instanton $A^{\text{\emph{ASD}}}_{\lambda}$ on $B_1\subseteq\mathbb{R}^4$ as in \eqref{eq:basic.ASD}. 
\item[(b)] The connections $A^{x_1}$ converge  uniformly with all derivatives to $A^{\lim}$ given in Theorem \ref{thm:Alim} on every compact subset of $(\mathbb{R}^4 \setminus \{0\})\times S^3  $ as $x_1\to +\infty$. 
\end{itemize}
\end{theorem}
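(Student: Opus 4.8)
The plan is to exploit that both $A^{x_1}$ and $A^{\lim}$ have $y\equiv 0$, so each is governed by a single scalar function. Writing $A=g(t)\sum_{i=1}^3 T_i\otimes\eta_i^+$, we have by \eqref{eq:ClarkeG2InstantonFormula} and Theorem~\ref{thm:Alim}
$$g_{x_1}(t)=\frac{2x_1A_1^2(t)}{1+x_1\bigl(B_1^2(t)-\tfrac13\bigr)},\qquad g_\infty(t)=\frac{2A_1^2(t)}{B_1^2(t)-\tfrac13}.$$
Both assertions then reduce to convergence of these coefficient functions together with all derivatives, since the forms $\eta_i^+$ are fixed and $SU(2)^3$-invariant. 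Uniformity in $p$ is automatic: the singular orbit is homogeneous and the connections are invariant, so the restriction of $A^{x_1}$ to each normal fibre is the same.

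For (b) I would rewrite $g_{x_1}=2A_1^2/\bigl(x_1^{-1}+(B_1^2-\tfrac13)\bigr)$. On a compact subset of $(\mathbb{R}^4\setminus\{0\})\times S^3$ we have $t\ge t_0>0$, hence $B_1^2-\tfrac13\ge\varepsilon_0>0$, so the denominator stays bounded below and $(t,x_1^{-1})\mapsto g_{x_1}(t)$ is jointly real-analytic on $\{t\ge t_0\}\times[0,\varepsilon_0)$. Therefore $g_{x_1}\to g_\infty$ in $C^\infty\{t\ge t_0\}$ as $x_1\to+\infty$, which is exactly the $C^\infty$ convergence $A^{x_1}\to A^{\lim}$ claimed. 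This part is essentially bookkeeping.

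For (a), the point is that near the singular orbit $g_{x_1}$ already has ASD shape, and the rescaling merely fixes the concentration scale. Using Example~\ref{ex:BS} and the smooth-extension analysis of Appendix~\ref{app:R4xS3}, $A_1^2$ and $B_1^2-\tfrac13$ are even real-analytic in $t$ with
$$A_1^2(t)=t^2\tilde\phi(t^2),\quad B_1^2(t)-\tfrac13=t^2\tilde\psi(t^2),\quad \tilde\phi(0)=\tfrac14,\ \tilde\psi(0)=\tfrac12.$$
On the normal fibre over $p$ the forms $\eta_i^-$ vanish while the $\eta_i^+$ restrict to the standard Maurer--Cartan forms, which are invariant under the radial dilation $s^p_\delta$. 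Writing $w(x):=t(\delta x)^2=\delta^2|x|^2\,\Theta(\delta^2|x|^2)$ for the (real-analytic, by Appendix~\ref{app:R4xS3}) relation between the Euclidean fibre coordinate and the radial coordinate $t$, one obtains
$$(s^p_\delta)^*A^{x_1}=\frac{2\,(x_1w)\,\tilde\phi(w)}{1+(x_1w)\,\tilde\psi(w)}\sum_{i=1}^3 T_i\otimes\eta_i^+.$$
Choosing $\delta=\delta(x_1,\lambda)$ by $x_1\delta^2=c_0(\lambda)$ (so $\delta$ is of order $x_1^{-1/2}$ and $\delta\to0$) forces $x_1w\to c_0\Theta(0)|x|^2$, and the crucial coincidence $2\tilde\phi(0)=\tilde\psi(0)$ makes the leading coefficients of $x_1w$ in numerator and denominator agree, so the limit collapses to $\frac{\lambda|x|^2}{1+\lambda|x|^2}$ once $c_0$ is fixed by the scale. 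Comparing with \eqref{eq:basic.ASD}, this is exactly $A^{\mathrm{ASD}}_\lambda$.

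To upgrade this pointwise limit to convergence \emph{with all derivatives}, I would note that, after substituting $x_1=c_0/\delta^2$, the displayed coefficient is a real-analytic function of $(\delta,x)$ on $[0,\delta_0)\times\overline{B_1}$ (the denominator is $\ge 1$), whence $C^\infty(\overline{B_1})$ convergence as $\delta\to0$ is automatic from joint analyticity. The main obstacle is not the limiting mechanism but the geometric set-up beneath it: identifying the normal fibre coordinate with the radial coordinate $t$ and extracting the real-analytic change of variables $w(x)$ and the leading coefficients $\tilde\phi(0),\tilde\psi(0)$ from the smooth-extension results of Appendix~\ref{app:R4xS3}. Once the coincidence $2\tilde\phi(0)=\tilde\psi(0)$ is secured, everything else is routine.
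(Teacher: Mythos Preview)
Your proposal is correct and follows essentially the same approach as the paper: direct computation with the explicit formulae, exploiting the Taylor expansions of $A_1,B_1$ near $t=0$ for part (a) and the bounded-away-from-zero denominator $B_1^2-\tfrac13$ on compacta for part (b), with your joint-analyticity packaging in place of the paper's explicit $C^k$ bounds. One simplification: in the paper's coordinates $t$ is literally the Euclidean radius $|p|$ on the normal fibre (by the $SU(2)^2$-equivariance, cf.\ the identification $dt=dp_0$ in Appendix~\ref{app:R4xS3}), so your $\Theta\equiv 1$ and the ``obstacle'' you flag dissolves --- the paper simply substitutes $t\mapsto\delta t$, sets $\delta=\sqrt{2\lambda/x_1}$, and reads off the explicit bound $\|(s^p_\delta)^*A^{x_1}-A^{\mathrm{ASD}}_\lambda\|_{C^k(B_1)}\leq c_k\lambda^2/x_1$.
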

\begin{proof}
We prove the two parts independently.\\
(a) \/ We view the basic instanton $A^{\text{ASD}}_{\lambda}$ in \eqref{eq:basic.ASD} as defined on $\mathbb{R}^4\times \{p\}$. Using the formula for $A^{x_1}$ in Theorem \ref{thm:Clarke} and the expansions of $A_1$ and $B_1$ near $0$ in Example \ref{ex:BS} from Appendix \ref{app:R4xS3}, we compute, for $t< 1$, 
\begin{eqnarray}\nonumber
(s^p_{\delta})^* A^{x_1} & = & A_1(\delta t ) x(\delta t ) T_i\otimes \eta_i^+  = \frac{2x_1 A_1^2(\delta t)}{1+x_1 \left( B_1^2(\delta t) - \frac{1}{3} \right)} T_i\otimes \eta_i^+  \\ \nonumber
& = &  \frac{x_1\delta^2 t^2/2 + O(x_1 \delta^4 t^4)}{1+x_1\delta^2 t^2/2 + O(x_1 \delta^4 t^4)}  T_i \otimes\eta_i^+ .
\end{eqnarray}
Hence, setting $\delta = \delta(x_1,\lambda)= \sqrt{2 \lambda / x_1}$ we have that for every $k \in \mathbb{N}_0$, there is $c_k>0$, independent of $\lambda$ and $x_1$, such that
\begin{eqnarray*}
\Vert (s^p_{\delta})^* A^{x_1} - A^{\text{ASD}}_{\lambda} \Vert_{C^k(B_1)} \leq c_k \frac{\lambda^2}{x_1}.
\end{eqnarray*}
Therefore, given $\epsilon >0$, we have for any $x_1\geq c_k \lambda^2/ \epsilon$ 
 that $$\Vert (s^p_{\delta})^* A^{x_1} - A^{\text{ASD}}_{\lambda} \Vert_{C^k(B_1)} \leq \epsilon,$$
 demonstrating the claimed convergence. \\
(b) \/ We take the explicit formulas for $A^{x_1}$ and $A^{\lim}$ in 
Theorems \ref{thm:Clarke}-\ref{thm:Alim} and compute
\begin{eqnarray}\nonumber
\vert A^{x_1} - A^{\lim} \vert & = & \frac{A_1^2(t)}{\frac{1}{2}(B_1^2(t)-\frac{1}{3}) 
} \ \Big\vert  \frac{x_1 (B_1^2(t)-\frac{1}{3})}{1+x_1 (B_1^2(t)-\frac{1}{3}) 
} -1 \Big\vert \Big\vert \sum_{i=1}^3T_i\otimes \eta_i^+ \Big\vert \\ \nonumber
& \leq & \frac{c A_1(t)}{\frac{1}{2}(B_1^2(t)-\frac{1}{3}) 
} \frac{1}{1+x_1 (B_1^2(t)-\frac{1}{3}) 
},
\end{eqnarray}
for some  constant $c>0$.  Recall that in the coordinate $r\in[1,+\infty)$ from \eqref{eq:r.BS} we have $B_1(r)=r/\sqrt{3}$ by \eqref{eq:AB.BS}.  
Hence, $B_1^2-\frac{1}{3}$ is bounded and bounded away from zero on every compact $K\subseteq (\mathbb{R}^4 \setminus \{0\})\times S^3 $.  Thus, for every such $K$  there is some (possibly other) constant $c>0$ such that 
\begin{equation}\label{eq:cvge2}
\vert A^{x_1} - A^{\lim} \vert  \leq  \frac{c}{1+x_1},
\end{equation}
and we have similar estimates for the derivatives of $A^{x_1}-A^{\lim}$. By letting $x_1 \rightarrow + \infty$ the right-hand side of \eqref{eq:cvge2} tends to zero as required.
\end{proof}

\begin{remark}
As already mentioned,  the fact that $A^{\lim}$ smoothly extends over $S^3$ is an example of a removable singularity phenomenon. It follows from Tian and Tao's work \cites{Tian2000,Tao2004} that such phenomena occur more generally provided that the $G_2$-instanton is invariant under a group action all of whose orbits have dimension greater than or equal to $3$ (codimension less than or equal to $4$).
\end{remark}

\noindent Even though the $G_2$-instantons $A^{x_1}$ and $A_{\lim}$ do not have finite energy, and so the results of \cite{Tian2000} do not immediately apply, we now show that we do have the expected energy concentration along the associative $S^3$.  Below, we let $\delta_{\lbrace 0 \rbrace \times S^3}$ denote the delta current associated with $\lbrace 0 \rbrace \times S^3$.

\begin{corollary}\label{cor:delta}
The function $\vert F_{A^{x_1}} \vert^2 - \vert F_{A_{\lim}} \vert^2$ is integrable for all $x_1>0$. Moreover, as $x_1 \rightarrow + \infty$ it converges to $8 \pi^2 \delta_{\lbrace 0 \rbrace \times S^3}$ as a current, i.e.~for all compactly supported functions $f$ we have
$$\lim_{x_1 \rightarrow + \infty}\int_{\mathbb{R}^4 \times S^3} f  (\vert F_{A^{x_1}} \vert^2 - \vert F_{A_{\lim}} \vert^2) \ \dvol_g = 8 \pi^2 \int_{\lbrace 0 \rbrace \times S^3} f \  \dvol_{g \vert_{\lbrace 0 \rbrace \times S^3}}.$$
\end{corollary}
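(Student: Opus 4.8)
The plan is to use that both connections are explicit and, away from the singular orbit, live on the trivial bundle as $W\sum_i T_i\otimes\eta_i^+$ with $y=0$, where $W=A_1x$ equals $\tfrac{2x_1A_1^2}{1+x_1(B_1^2-\frac13)}$ for $A^{x_1}$ and $\tfrac{2A_1^2}{B_1^2-\frac13}$ for $A^{\lim}$ (Theorems \ref{thm:Clarke} and \ref{thm:Alim}). By $SU(2)^3$-invariance the densities $|F_{A^{x_1}}|^2$ and $|F_{A_{\lim}}|^2$ depend only on $t$ and are given by \eqref{eq:Norm_of_curvature}, while $\dvol_g=64\,A_1^3B_1^3\,dt\wedge\nu_0$ for the fixed bi-invariant measure $\nu_0$ on $S^3\times S^3$. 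For integrability of the difference I would argue near and far from the associative $S^3$ separately. Near it both connections extend smoothly (over $P_1$ and $P_{\id}$), so both densities are bounded on a compact collar and their difference is integrable there. As $t\to+\infty$ both $W\to\tfrac23$, i.e. both connections converge to the nearly-K\"ahler instanton $a_\infty$ of \eqref{eq:ainfty}; substituting $A_1\sim r/3$, $B_1=r/\sqrt3$ and $W=\tfrac23+p\,r^{-2}+O(r^{-3})$ into \eqref{eq:Norm_of_curvature} shows that the leading $O(r^{-4})$ parts of the two densities agree. The decisive point is that $a_\infty$ is a Yang--Mills connection, so the first variation of the energy density in $W$ vanishes at $W=\tfrac23$; this makes the potentially non-integrable $O(r^{-6})$ and $O(r^{-7})$ contributions to the difference cancel, leaving $|F_{A^{x_1}}|^2-|F_{A_{\lim}}|^2=O(r^{-8})$, which is integrable against the volume growth $r^6\,dr$. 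This proves the first assertion.

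For the current statement, fix a compactly supported $f$ and a small $\epsilon>0$ and split $\mathbb{R}^4\times S^3=\{t\ge\epsilon\}\cup\{t<\epsilon\}$. On the first piece $\mathrm{supp}(f)\cap\{t\ge\epsilon\}$ is compact, and Theorem \ref{thm:Compactness}(b) gives $A^{x_1}\to A^{\lim}$ uniformly with all derivatives there, so $\int_{\{t\ge\epsilon\}}f\,(|F_{A^{x_1}}|^2-|F_{A_{\lim}}|^2)\,\dvol_g\to0$ as $x_1\to+\infty$. Since $|F_{A_{\lim}}|^2$ is bounded, $\int_{\{t<\epsilon\}}f\,|F_{A_{\lim}}|^2\,\dvol_g\to0$ as $\epsilon\to0$, uniformly in $x_1$. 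Hence everything reduces to the concentration of $\int_{\{t<\epsilon\}}f\,|F_{A^{x_1}}|^2\,\dvol_g$.

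For the near region I would invoke the bubbling of Theorem \ref{thm:Compactness}(a): with $\delta=\sqrt{2\lambda/x_1}$ the rescaled connections $(s^p_\delta)^*A^{x_1}$ converge to the basic charge-one ASD instanton $A^{\text{ASD}}_\lambda$ of \eqref{eq:basic.ASD} on the transverse $\mathbb{R}^4$, whose Yang--Mills energy $\int_{\mathbb{R}^4}|F_{A^{\text{ASD}}_\lambda}|^2\,\dvol=8\pi^2$ is independent of $\lambda$. Because the normal bundle metric of the associative $S^3$ is asymptotically flat along the zero section and the four-dimensional energy is conformally invariant, the transverse energy trapped by $A^{x_1}$ is exactly this flat value $8\pi^2$, concentrating on $\{0\}\times S^3$ as $x_1\to+\infty$; as $f$ is continuous and the concentration occurs in balls shrinking to $\{0\}\times S^3$, while on the neck $A^{x_1}$ stays $C^\infty$-close to $A^{\lim}$, I obtain $\int_{\{t<\epsilon\}}f\,|F_{A^{x_1}}|^2\,\dvol_g\to8\pi^2\int_{\{0\}\times S^3}f\,\dvol_{g|_{\{0\}\times S^3}}+o_\epsilon(1)$. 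Combining the three contributions and letting $\epsilon\to0$ gives the stated convergence to $8\pi^2\delta_{\{0\}\times S^3}$.

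The hardest step is making the near-region concentration quantitatively exact: one must rule out energy leaking into the neck between the bubble scale $\delta$ and the fixed scale $\epsilon$, so that the trapped transverse energy is precisely $8\pi^2$ with the correct induced measure on $\{0\}\times S^3$. I would establish this no-neck-energy property either directly, from the explicit one-variable profile $|F_{A^{x_1}}|^2(t)$ together with the closed forms of $W^{x_1}$, $A_1$, $B_1$, by integrating $|F_{A^{x_1}}|^2A_1^3B_1^3$ over $(\delta,\epsilon)$ and checking that it converges to the $A^{\lim}$-contribution, or more conceptually by using the identity $|F_A|^2\,\dvol_g=-d(\mathrm{CS}(A)\wedge\varphi)$ valid for every $G_2$-instanton (from $d\varphi=0$ and \eqref{eq:ASD}) and reading off the concentrated energy from the boundary terms at $t=\delta$ and $t=\epsilon$.
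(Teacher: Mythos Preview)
Your approach is genuinely different from the paper's, which proceeds by brute force: it writes $|F_{A^{x_1}}|^2-|F_{A^{\lim}}|^2$ as an explicit rational function of $r$ and $x_1$ (a finite sum $\sum_{n,k} q_{n,k}(r-1)^k x_1^n / [(r+1)^4 r^6 (r^2x_1-x_1+3)^4]$), reads off integrability from this closed form, then shows term-by-term that in the limit $x_1\to\infty$ only the $(n,k)=(2,0)$ term survives as a current, and finally evaluates that one surviving integral to get the coefficient $8\pi^2$ matched against the volume of the associative $S^3$. There is no bubbling or neck analysis at all.

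Your integrability argument at infinity is correct and more conceptual than the paper's. The key observation---that the linear variation of the pointwise density in $W$ vanishes at $W=\tfrac23$---does follow from $a_\infty$ being Yang--Mills on the cone: restricted to the one-parameter family $W=\mathrm{const}$, the Yang--Mills condition becomes exactly $E'(2/3)=0$ for the scale-invariant density profile $E(W)$. You should say this explicitly rather than leaving ``Yang--Mills $\Rightarrow$ first variation of density vanishes'' unqualified, since for a general critical point of an integral functional the pointwise Lagrangian need not be critical. You also need (and it is true) that the $O(r^{-3})$ corrections to $W$ agree for $A^{x_1}$ and $A^{\lim}$, and that the subleading metric terms are common to both, so these cancel in the difference; otherwise the vanishing linearisation alone would not rule out an $O(r^{-7})$ discrepancy.

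The genuine gap is the near-region step. You correctly isolate that everything reduces to controlling the energy in the neck $\delta(x_1)\le t\le\epsilon$ and showing it is exactly the $A^{\lim}$ contribution, so that the bubble carries precisely $8\pi^2$ per fibre with the induced $S^3$-measure. You propose two routes but execute neither. The Chern--Simons identity $|F_A|^2\,\dvol_g=d(\mathrm{CS}(A)\wedge\varphi)$ (your sign is off, but the idea is right) is the cleanest way to close this: it converts the energy in $\{t_1\le t\le t_2\}$ into boundary Chern--Simons terms on the two principal orbits, and since $A^{x_1}\to A^{\lim}$ at $t=\epsilon$ while $(s^p_\delta)^*A^{x_1}\to A^{\mathrm{ASD}}_\lambda$ at the inner boundary, the neck energy is determined by the limits you already know. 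Without carrying this (or the direct integral) through, the proof is incomplete at exactly the point where the constant $8\pi^2$ is produced; the paper sidesteps this entirely by evaluating the explicit rational function.
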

\begin{proof}
First, a computation using \eqref{eq:Norm_of_curvature} shows that
\begin{eqnarray}\label{eq:Curvature_Difference}
\vert F_{A^{x_1}} \vert^2 - \vert F_{A_{\lim}} \vert^2 = \sum_{n=0}^3 \sum_{k=0}^{10} \frac{ 6q_{n,k} (r-1)^k x_1^n }{(r+1)^4 r^6 (r^2 x_1 -x_1 +3)^4} ,
\end{eqnarray}
for some (explicit) $q_{n,k} \in \mathbb{R}$. The claimed integrability of $\vert F_{A^{x_1}} \vert^2 - \vert F_{A_{\lim}} \vert^2$ now follows. Moreover, for future reference we mention here that
\begin{equation}\label{eq:q.values}
q_{3,0}=0=q_{3,1}, \ \text{and} \ q_{2,0}=2529.
\end{equation}
Next we prove the claimed convergence of $\vert F_{A^{x_1}} \vert^2 - \vert F_{A_{\lim}} \vert^2$ by showing that
\begin{equation}\label{eq:Limit}
\lim_{x_1 \rightarrow + \infty} \int_K (\vert F_{A^{x_1}} \vert^2 - \vert F_{A_{\lim}} \vert^2) \dvol_g 
\end{equation} 
vanishes if $K \subset (\mathbb{R}^4 \backslash \lbrace 0 \rbrace ) \times S^3$ is compact and equals $8 \pi^2  \ Vol( \lbrace 0 \rbrace \times S^3)$ if $\lbrace 0 \rbrace \times S^3 \subset K$. Notice that $\vert F_{A^{x_1}} \vert^2 - \vert F_{A_{\lim}} \vert^2$ is $SU(2)^3$-invariant and so it is enough to consider its integral over $SU(2)^3$-invariant subsets $K$ of $\mathbb{R}^4 \times S^3$. First we consider the case when $K$ is a compact subset of $(\mathbb{R}^4 \backslash \lbrace 0 \rbrace ) \times S^3$. Then, Theorem \ref{thm:Compactness}(b) guarantees that $\vert F_{A^{x_1}} \vert^2 - \vert F_{A_{\lim}} \vert^2$ converges uniformly to $0$ in $K$ and so \eqref{eq:Limit} is zero by the dominated convergence theorem.\\
To examine the case where $\lbrace 0 \rbrace \times S^3 \subset K$ we first show that,  as currents, we have
\begin{equation}\label{eq:Currents_Same_Limit}
\lim_{x_1 \rightarrow + \infty} (\vert F_{A^{x_1}} \vert^2 - \vert F_{A_{\lim}} \vert^2 ) = \lim_{x_1 \rightarrow + \infty}  \frac{ 6 \cdot 2592  x_1^2 }{(r+1)^4 r^6 (r^2 x_1 -x_1 +3)^4}. 
\end{equation}
Recall from $\S$\ref{sss:BS} that we can identify $(\mathbb{R}^4\setminus\{0\})\times S^3\cong (1,\infty)\times S^3\times S^3$, with $r$ the coordinate on $(1,\infty)$.  For $f \in C^{\infty}_c(\mathbb{R}^4 \times S^3 , \mathbb{R})$ we can then compute that 
\begin{align}\nonumber
 \Big\vert \int_{\mathbb{R}^4 \times S^3}  &f \frac{(r-1)^k x_1^n}{(r+1)^4 r^6 (r^2 x_1 -x_1 +3)^4} \dvol_g  \Big\vert  \\ 
&  \leq  \Vert f \Vert_{L^{\infty}} \int_{1}^{+\infty}   \frac{(r-1)^k x_1^n}{(r+1)^4 r^6 (r^2 x_1 -x_1 +3)^4} \frac{r^6(1-r^{-3}) Vol(\mathbb{S}^3_1)^2}{3^4 \sqrt{3}} dr ,\label{eq:big.int}
\end{align}
where $Vol(\mathbb{S}^3_1)=2\pi^2$ denotes the volume of the unit $3$-sphere in $\mathbb{R}^4$. Let $I_{n,k}(x_1)$ denote the integral on the right-hand side of \eqref{eq:big.int} and let $\epsilon>0$. To examine $I_{n,k}(x_1)$, we separate it into two integrals: one over $[1,1+\epsilon]$ and the other over $[1+\epsilon, + \infty)$. The second of these integrals can be easily seen to be finite and of order $x_1^{n-4}$ (independently of $k$), hence it vanishes as $x_1 \rightarrow + \infty$ since $n\leq 3$. The first integral over $[1,1+\epsilon]$, and thus $I_{n,k}(x_1)$ by the preceding argument, can be bounded as follows for some constant $c$: 
\begin{eqnarray}\label{eq:big.int2}
I_{n,k}(x_1) \leq c x_1^{n-4} \int_1^{1+\epsilon} \frac{(r-1)^{k+1}}{(r- \sqrt{1-3/x_1})^4} dr + O(x_1^{n-4}).
\end{eqnarray}
The integral on the right-hand side of \eqref{eq:big.int2} can now be computed to be of order $O(x_1^{2-k})$ for $k=0,1$, $O(\log(x_1))$ for $k=2$ and $O(1)$ for $k>2$. Letting $x_1 \rightarrow + \infty$ we see that \eqref{eq:big.int2} vanishes unless $k=0$ and $n=2,3$, or $k=1$ and $n=3$. From \eqref{eq:q.values} we then see that \eqref{eq:Currents_Same_Limit} holds as desired.\\
Now suppose $\lbrace 0 \rbrace \times S^3 \subset K$. Rewriting  \eqref{eq:Limit} we have from the first part of the proof that
\begin{align*}
\lim_{x_1 \rightarrow + \infty} \int_K (\vert F_{A^{x_1}} \vert^2 - \vert F_{A_{\lim}} &\vert^2) \dvol_g\\
& = \lim_{x_1 \rightarrow + \infty} \int_{K \cap B_{\epsilon}(0 \times S^3)} (\vert F_{A^{x_1}} \vert^2 - \vert F_{A_{\lim}} \vert^2) \dvol_g.
\end{align*}
 Hence, using \eqref{eq:Currents_Same_Limit} to compute the integral gives
\begin{align}\nonumber
\lim_{x_1 \rightarrow + \infty}&\int_{K} \frac{ 6 \cdot 2592  x_1^2 }{(r+1)^4 r^6 (r^2 x_1 -x_1 +3)^4} \dvol_g \\ \nonumber
& = \lim_{x_1 \rightarrow + \infty} \int_1^{1+\epsilon} \frac{ 6 \cdot 2592  x_1^2 (1-r^{-3}) }{(r+1)^4 r^6 (r^2 x_1 -x_1 +3)^4}  \frac{2^3 Vol(\mathbb{S}^3_1)^2}{3^4 \sqrt{3}} dr \\ \nonumber
& = \frac{4}{3\sqrt{3}} Vol(\mathbb{S}_1^3)^2 = \frac{8 \pi^2}{3 \sqrt{3}} Vol(\mathbb{S}_1^3) .
\end{align} 
Now recall that the metric at the zero section is $\left( \frac{2}{\sqrt{3}} \right)^2 (\eta_1^- \otimes \eta_1^- + \eta_2^- \otimes \eta_2^- + \eta_3^- \otimes \eta_3^-)$, hence its volume form is $1 / 3\sqrt{3}$ times that of $\mathbb{S}^3_1$. The result then follows.
\end{proof}

\begin{remark}
The sequence of instantons with curvature concentrating along the associative $S^3$ determines a Fueter section, as in \cites{Donaldson2009,Haydys2011,Walpuski2017}, from $S^3$ to the bundle of moduli spaces of anti-self-dual connections associated to the normal bundle.   The section thus determined is constant, taking value at the basic instanton on $\mathbb{R}^4$.  The Yang--Mills energy of the basic instanton is $8\pi^2$, so Corollary \ref{cor:delta} confirms the expected ``conservation of energy'' formula (c.f.~\cites{Tian2000}).
\end{remark}

\section[\texorpdfstring{{\boldmath $SU(2)^2\times U(1)$}-invariant {\boldmath $G_2$}-instantons}{SU(2)xSU(2)xU(1)-invariant G2-instantons}]{{\boldmath $SU(2)^2\times U(1)$}-invariant {\boldmath $G_2$}-instantons}\label{sec:SU(2)^2xU(1)invariant}

\noindent The main goal of this section is to investigate $SU(2)^2$-invariant $G_2$-instantons on the Brandhuber et al.~(BGGG) $G_2$-manifold $\mathbb{R}^4\times S^3$ from $\S$\ref{sss:BGGG}.  We will restrict ourselves to instantons that enjoy an extra $U(1)$-symmetry present in the underlying geometry. 
As already mentioned, all of the known complete $SU(2)^2$-invariant $G_2$-manifolds of cohomogeneity-$1$ 
 enjoy an extra $U(1)$-symmetry and so the analysis of $SU(2)^2\times U(1)$-invariant $G_2$-instantons 
  provides a natural stepping stone to a complete understanding of $SU(2)^2$-invariant $G_2$-instantons.\\
We begin in $\S$\ref{ss:BGGGODEs} by deriving the ODEs determining $G_2$-instantons in this setting by simplifying the general ODEs and constraint in Lemma \ref{lem:ODE}.  We then  determine the necessary 
conditions ensuring that solutions to these ODEs smoothly extend across 
the singular orbit in the Bryant--Salamon (BS), BGGG and Bogoyavlenskaya $G_2$-manifolds in 
$\S$\ref{ss:BGGGinits}.  In the final section $\S$\ref{ss:solutions.BGGG}, we explicitly describe the $G_2$-instantons which exist near the singular orbit. This leads to a stronger classification result in the BS case, and existence and non-existence results for global $G_2$-instantons in the BGGG case.

\subsection[\texorpdfstring{The $SU(2)^2\times U(1)$-invariant ODEs}{The SU(2)xSU(2)xU(1)-invariant ODEs}]{The {\boldmath $SU(2)^2\times U(1)$}-invariant ODEs}\label{ss:BGGGODEs}

 We shall now rewrite the ODEs from Lemma \ref{lem:ODE} in this $SU(2)^2\times U(1)$-invariant setting. 
 As for the $SU(2)^3$-invariant case it will be convenient to rescale the fields $a_i^{\pm}$, for $i=1,2,3$, defining the connection 1-form as in \eqref{eq:InvariantConnection}. We thus define
$$c^+_i= \frac{a^+_i}{A_i}, \quad c^-_i = \frac{a^-_i}{B_i}$$
so that the connection 1-form is 
$$A=\sum_{i=1}^3 A_ic^+_i\otimes \eta_i^++B_ic^-_i\otimes\eta_i^-.$$
In these terms we can use \eqref{eq:dotA1}-\eqref{eq:dotB2} to obtain the general $SU(2)^2\times U(1)$-invariant $G_2$-instanton equations for $A$ from Lemma \ref{lem:ODE} as follows:
\begin{align}
\dot{c}_1^+ + 
\frac{1}{2}\left(\frac{A_1}{B_2^2}-\frac{A_1}{A_2^2}\right)c_1^+ & = 
 \frac{1}{2} [c_2^-, c_3^-]-\frac{1}{2} [c_2^+, c_3^+],\label{eq:dotc1+} \\ 
\dot{c}_2^+ + 
\frac{1}{2}\left(\frac{A_2^2+B_1^2+B_2^2}{A_2B_1B_2}-\frac{A_1^2+2A_2^2}{A_1A_2^2}\right) c_2^+ & = 
\frac{1}{2} [c_3^- , c_1^-]-\frac{1}{2} [c_3^+ , c_1^+],\label{eq:dotc2+} \\ 
\dot{c}_3^+ + 
\frac{1}{2}\left(\frac{A_2^2+B_1^2+B_2^2}{A_2B_1B_2}-\frac{A_1^2+2A_2^2}{A_1A_2^2}\right)c_3^+ & = 
\frac{1}{2} [c_1^- , c_2^-]-\frac{1}{2} [c_1^+ , c_2^+] ,\label{eq:dotc3+}\\
\dot{c}_1^- + 
\left(\frac{A_2^2+B_1^2+B_2^2}{A_2B_1B_2}\right) c_1^- & =
\frac{1}{2} [c_2^-, c_3^+]+\frac{1}{2} [c_2^+, c_3^-],\label{eq:dotc1-} \\ 
\dot{c}_2^- + 
\frac{1}{2}\left(\frac{A_2^2+B_1^2+B_2^2}{A_2B_1B_2}+\frac{A_1^2+2B_2^2}{A_1B_2^2}\right)c_2^- & =  
\frac{1}{2} [c_3^- , c_1^+] +\frac{1}{2} [c_3^+ , c_1^-],\label{eq:dotc2-} \\ 
\dot{c}_3^- + 
\frac{1}{2}\left(\frac{A_2^2+B_1^2+B_2^2}{A_2B_1B_2}+\frac{A_1^2+2B_2^2}{A_1B_2^2}\right)c_3^- & = 
\frac{1}{2} [c_1^- , c_2^+] +\frac{1}{2} [c_1^+ , c_2^-] ,\label{eq:dotc3-}
\end{align}
together with the constraint
\begin{equation}\label{eq:c.constraint}
\sum_{i=1}^3 [c_i^+ , c_i^-]=0.
\end{equation}
We now wish to simplify these equations further using an additional $U(1)$-symmetry in the ambient geometry. This extra symmetry in the 
known complete $SU(2)^2$-invariant cohomogeneity-$1$ $G_2$-manifolds from $\S$\ref{sec:G2metrics} 
can be encoded, for example, by regarding $S^3 \times S^3$ as $SU(2)^2 \times U(1) / \Delta U(1)$, with $\Delta U(1)$ acting via $$e^{i \theta} \cdot (g_1,g_2,e^{i \alpha})= (g_1 \diag (e^{i \theta}, e^{-i \theta})
, g_2 \diag (e^{i \theta}, e^{-i \theta})   
,  e^{i (\alpha+ \theta)}).$$ 
With this in hand, we can derive the simplified ODEs in this setting.

\begin{proposition}\label{prop:ODEsSU(2)xU(2)}
Let $A$ be an $SU(2)^2 \times U(1)$-invariant  $G_2$-instanton on 
$\mathbb{R}^+\times SU(2)^2\cong \mathbb{R}^+\times (SU(2)^2\times U(1)/\Delta U(1))$ with gauge group $SU(2)$. 
There is a standard basis  $\lbrace T_i \rbrace_{i=1}^3$ of $\mathfrak{su}(2)$, i.e.~with $[T_i,T_j]=2\epsilon_{ijk} T_k$, such that (up to an invariant gauge transformation) we can write
\begin{eqnarray}\label{eq:InvariantConnection.BGGG}
A & = &  A_1 f^{+} T_1\otimes \eta_1^{+}  + A_2g^{+} (T_2\otimes\eta_2^{+}  + T_3\otimes\eta_3^{+} ) \\ \nonumber
& &
 + B_1 f^{-} T_1\otimes \eta_1^{-} + B_2 g^{-} (T_2\otimes\eta_2^{-}  + T_3\otimes\eta_3^{-}),
\end{eqnarray}
with $f^{\pm}$, $g^{\pm}: \mathbb{R}^+ \rightarrow \mathbb{R}$ satisfying 
\begin{align}\label{eq:dotf+}
\dot{f}^+ + 
\frac{1}{2}\left(\frac{A_1}{B_2^2}-\frac{A_1}{A_2^2}\right)f^+ & =  (g^-)^2 - (g^+)^2,\\ 
\label{eq:dotg+}
\dot{g}^+ +  
\frac{1}{2}\left(\frac{A_2^2+B_1^2+B_2^2}{A_2B_1B_2}-\frac{A_1^2+2A_2^2}{A_1A_2^2}\right)g^+ & =  f^-g^- - f^+ g^+ ,\\
\label{eq:dotf-}
\dot{f}^- +  
\left(\frac{A_2^2+B_1^2+B_2^2}{A_2B_1B_2}\right) f^- & =  2g^- g^+, \\ 
\label{eq:dotg-}
\dot{g}^- + 
\frac{1}{2}\left(\frac{A_2^2+B_1^2+B_2^2}{A_2B_1B_2}+\frac{A_1^2+2B_2^2}{A_1B_2^2}\right)g^- & =  g^- f^+ + g^+ f^- .
\end{align}
\end{proposition}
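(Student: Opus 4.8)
The plan is to obtain the reduced form \eqref{eq:InvariantConnection.BGGG} by classifying $SU(2)^2\times U(1)$-invariant connections through Wang's theorem \cite{Wang1958} and then substituting into the general system \eqref{eq:dotc1+}--\eqref{eq:c.constraint}. First I would realise the principal orbit as $M=SU(2)^2\times U(1)/\Delta U(1)$, so that the extra symmetry is built into the homogeneous space, the isotropy being $H=\Delta U(1)$ with Lie algebra generated by $T_1^++V_0$, where $V_0$ generates the $U(1)$-factor. By \cite{Wang1958}, invariant connections on the homogeneous $SU(2)$-bundle $P_\lambda$ are the canonical connection twisted by morphisms $\Lambda\colon(\mathfrak m,\Ad)\to(\mathfrak{su}(2),\Ad\circ\lambda)$ of $H$-representations, with $\lambda\colon\Delta U(1)\to SU(2)$ the isotropy homomorphism. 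I would then decompose both sides into weight spaces: since $V_0$ acts trivially by $\Ad$, the isotropy acts on $\mathfrak m$ through $\Ad_{T_1^+}$, which fixes $T_1^\pm$ and rotates $(T_2^\pm,T_3^\pm)$ at speed $2$; thus $\mathfrak m$ splits into two weight-$0$ lines (controlling the $\eta_1^+$ and $\eta_1^-$ components) and two weight-$\pm2$ planes (the $(\eta_2^\pm,\eta_3^\pm)$ blocks). Taking $\lambda$ of charge one, $\mathfrak{su}(2)$ splits as the weight-$0$ line $\mathbb{R}T_1$ and the weight-$\pm2$ plane $\mathbb{R}T_2\oplus\mathbb{R}T_3$. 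Schur's lemma forces $c_1^\pm$ to be valued in $\mathbb{R}T_1$ and each $(2,3)$-block to be valued in $\mathbb{R}T_2\oplus\mathbb{R}T_3$.

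The hard part will be that the equivariant maps between the two weight-$\pm2$ planes form a two-real-dimensional space, so a priori each $(2,3)$-block carries a magnitude \emph{and} a phase: it takes the shape $g^\pm(T_2\otimes\eta_2^\pm+T_3\otimes\eta_3^\pm)+h^\pm(T_3\otimes\eta_2^\pm-T_2\otimes\eta_3^\pm)$, with an extra twisting parameter $h^\pm$ beyond \eqref{eq:InvariantConnection.BGGG}. To remove it I would use the residual invariant gauge freedom, which is exactly the centraliser torus $\exp(\mathbb{R}T_1)\subset SU(2)$, and argue as in the proof of Proposition \ref{prop:BSODEs}: writing each block as $\Ad(\exp(\phi^\pm(t)T_1))$ applied to an aligned block and using the torus gauge to fix $\phi^+\equiv0$, I would substitute into \eqref{eq:dotc1+}--\eqref{eq:dotc3-} and read off the $T_3$-components of the block equations. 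These give $g^-f^-\sin(2\phi^-)=0$ from \eqref{eq:dotc2+} and $2g^-\dot\phi^-=-g^+f^-\sin(2\phi^-)$ from \eqref{eq:dotc2-}, so whenever the connection is irreducible (i.e. $f^-g^-\neq0$) the residual twist satisfies $\sin(2\phi^-)=0$ and hence vanishes, modulo a sign that is absorbed into $g^-$. The degenerate cases in which a block vanishes are abelian and already covered by $\S$\ref{ss:abelian}, and are trivially of the stated form. This yields \eqref{eq:InvariantConnection.BGGG} with $h^\pm=0$ and real $f^\pm,g^\pm$.

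Finally I would carry out the substitution into the general system. With $A_2=A_3$, $B_2=B_3$ and the rescaling $c_i^+=a_i^+/A_i$, $c_i^-=a_i^-/B_i$, the aligned connection is $c_1^\pm=f^\pm T_1$, $c_2^\pm=g^\pm T_2$, $c_3^\pm=g^\pm T_3$, and the brackets reduce via $[T_i,T_j]=2\epsilon_{ijk}T_k$. For instance the right-hand side of \eqref{eq:dotc1+} is $\tfrac12\big([c_2^-,c_3^-]-[c_2^+,c_3^+]\big)=\big((g^-)^2-(g^+)^2\big)T_1$, reproducing \eqref{eq:dotf+}; likewise \eqref{eq:dotc2+} collapses to \eqref{eq:dotg+} and \eqref{eq:dotc1-} (whose right-hand side is $2g^+g^-T_1$) to \eqref{eq:dotf-}, while \eqref{eq:dotc3+}, \eqref{eq:dotc2-} and \eqref{eq:dotc3-} merely duplicate \eqref{eq:dotg+}, \eqref{eq:dotg-} and \eqref{eq:dotg-}. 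The constraint \eqref{eq:c.constraint} becomes $[f^+T_1,f^-T_1]+[g^+T_2,g^-T_2]+[g^+T_3,g^-T_3]=0$ and so holds identically. Thus the six equations and the constraint reduce precisely to the four ODEs \eqref{eq:dotf+}--\eqref{eq:dotg-}, completing the proof.
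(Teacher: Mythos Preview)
Your overall route is the same as the paper's: classify $SU(2)^2\times U(1)$-invariant connections via Wang's theorem, then feed the ansatz into \eqref{eq:dotc1+}--\eqref{eq:c.constraint}. You are in fact more careful than the paper at one point: you correctly observe that $\Hom_{U(1)}(\mathbb{C}_2,\mathbb{C}_2)\cong\mathbb{C}$ is two-real-dimensional, so each $(2,3)$-block carries a phase, and the ansatz is \emph{a priori} six-dimensional rather than four. (The paper simply asserts that ``we can apply a gauge transformation'' to reach the aligned form, and then encodes the residual freedom in a map $\gamma:\mathbb{R}^+\to SU(2)$, which it shows is constant using the ODEs.)

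However, your elimination of the extra phase does not close. First, ``use the torus gauge to fix $\phi^+\equiv 0$'' is a $t$-dependent gauge transformation, which destroys the temporal gauge in which \eqref{eq:dotc1+}--\eqref{eq:c.constraint} are written; only a \emph{constant} invariant gauge transformation is available, so you may set $\phi^+(t_0)=0$ at one time only. Second, ``irreducible (i.e.\ $f^-g^-\neq 0$)'' is not the right dichotomy: connections with $f^-\equiv 0$ and $g^+\neq 0$ are irreducible (this is precisely the generic case on $P_1$ in Proposition~\ref{prop:LocalG2Instantons}), yet your equation $g^-f^-\sin(2\phi^-)=0$ then gives no information. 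The clean fix is to use the constraint \eqref{eq:c.constraint} first, not last: on the full six-parameter ansatz it reads $g^+g^-\sin(\psi^+-\psi^-)=0$, so wherever $g^+g^-\neq 0$ the two phases agree (up to a sign absorbed into $g^-$). With $\psi^+=\psi^-$, the component of \eqref{eq:dotc2+} transverse to the aligned direction gives $g^+\dot\psi^+=0$, so the common phase is constant, and a single constant torus gauge removes it; if one of $g^\pm$ vanishes identically its block has no phase and the other is handled the same way. With this correction your final substitution is correct and yields \eqref{eq:dotf+}--\eqref{eq:dotg-} exactly as stated.
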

\begin{proof}
We must consider $SU(2)^2 \times U(1)$-homogeneous $SU(2)$-bundles over $S^3 \times S^3 \cong SU(2)^2 \times U(1) / \Delta U(1)$. Such bundles are parametrized by isotropy homomorphisms $\lambda: \Delta U(1) \rightarrow SU(2)$, which take the form $\lambda_k(e^{i \theta})= \diag(e^{ik \theta}, e^{-i k \theta})$. We take the complement of the isotropy algebra $\Delta \mathfrak{u}(1)$ to be $\mathfrak{m}= \mathfrak{su}^+(2) \oplus \mathfrak{su}^-(2) \oplus 0$. The canonical invariant connection on the bundle $$P_k = ( SU(2)^2 \times U(1) ) \times_{(\Delta U(1), \lambda_k)} SU(2)$$ is given by $d \lambda_k = T_1\otimes k d \theta$, where the $\lbrace T_i \rbrace_{i=1}^3$ form a standard basis for $\mathfrak{su}(2)$ and $\theta$ is the periodic coordinate on $U(1)$. Wang's theorem \cite{Wang1958} states that any other invariant connection $a$ on $P_k$ can be written as $d \lambda_k + \Lambda_k$, where $\Lambda_k$ is the left-invariant extension to $SU(2)^2 \times U(1)$ of a morphism of $\Delta U(1)$-representations $\Lambda_k : (\mathfrak{m} , \Ad) \rightarrow (\mathfrak{su}(2) , \Ad \circ \lambda_k)$. Splitting into irreducibles, we have
$$\mathfrak{m} = (\mathbb{R} \oplus \mathbb{C}_2) \oplus ( \mathbb{R} \oplus \mathbb{C}_2) \oplus 0,$$
while $(\mathfrak{su}(2) , \Ad \circ \lambda_k)$ splits as $\mathbb{R}\oplus\mathbb{C}_{2k}$. 
 Therefore, other invariant connections exist only when $k=1$, in which case we can apply a gauge transformation so that
\begin{eqnarray}\nonumber
a & = & T_1\otimes d \theta    
+   A_1 f^{+} T_1\otimes \eta_1^{+} + A_2 g^{+} (T_2\otimes\eta_2^{+}  +  T_3\otimes \eta_3^{+} ) \\ \nonumber
&  & \qquad\quad\,\,\, +\,   B_1 f^{-} T_1\otimes \eta_1^{-} + B_2 g^{-} ( T_2\otimes\eta_2^{-}  + T_3\otimes\eta_3^{-} ),
\end{eqnarray}
where $f^{\pm}$, $g^{\pm}$ are constants. We now pull this back to $SU(2)^2$ via the inclusion map $SU(2)^2 \rightarrow SU(2)^2 \times U(1)$ and extend it to 
$\mathbb{R}^+\times SU(2)^2$ to  obtain
\begin{eqnarray}\nonumber
A & = &  \gamma\big(A_1 f^{+} T_1\otimes \eta_1^{+} + A_2 g^{+} (T_2\otimes\eta_2^{+}  +  T_3\otimes \eta_3^{+} )\big)\gamma^{-1} \\ \nonumber
& & + \gamma\big( B_1 f^{-} T_1\otimes \eta_1^{-} + B_2 g^{-} (T_2\otimes\eta_2^{-}  +  T_3\otimes\eta_3^{-} ) \big)\gamma^{-1} ,
\end{eqnarray}
for functions $\gamma:\mathbb{R}^+\to SU(2)$ and $f^{\pm},g^{\pm}:\mathbb{R}^+\to\mathbb{R}$.
\\
We now turn our attention to such connections $A$ which can solve the $G_2$-instanton equations \eqref{eq:dotc1+}-\eqref{eq:c.constraint}. 
We first see that the constraint \eqref{eq:c.constraint} is satisfied and 
 we claim that the evolution equations imply the ODEs \eqref{eq:dotf+}-\eqref{eq:dotg-} and that $\dot{\gamma}=0$.  
  Observe that  \eqref{eq:dotc1+} becomes
$$\dot{f}^+ T_1 + f^+ [\gamma^{-1}\dot{\gamma},T_1] + \frac{1}{2}\left(\frac{A_1}{B_2^2}-\frac{A_1}{A_2^2}\right)  f^+ T_1 =  ((g^{-})^2 - (g^{+})^2 ) T_1 .  $$
We conclude that $f^+[\gamma^{-1}\dot{\gamma},T_1]=0$  
and obtain \eqref{eq:dotf+}. 
Entirely analogous computations yield 
$$g^+[\gamma^{-1}\dot{\gamma},T_i]=0,\quad f^-[\gamma^{-1}\dot{\gamma},T_1]=0,\quad g^-[\gamma^{-1}\dot{\gamma},T_i]=0$$
for $i=2,3$, as well as \eqref{eq:dotg+}-\eqref{eq:dotg-}.  Hence, if $A\neq 0$ we 
obtain $\dot{\gamma}=0$ and so  
$A$ takes the form \eqref{eq:InvariantConnection.BGGG} as required.
\end{proof}

\begin{remark}
In the setup of the Proposition \ref{prop:ODEsSU(2)xU(2)}, we have
$$c_1^{\pm}= f^{\pm} T_1 , \ c_2^{\pm}=g^{\pm}  T_2, \ c_3^{\pm}=g^{\pm} T_3,$$
where $f^{\pm}, g^{\pm} : \mathbb{R}^+ \rightarrow \mathbb{R}$ satisfy the ODEs \eqref{eq:dotf+}-\eqref{eq:dotg-}.
\end{remark}

\subsection{Initial conditions}\label{ss:BGGGinits}
  
To investigate $SU(2)^2 \times U(1)$-invariant $G_2$-instantons $A$ on the BGGG $G_2$-manifold $\mathbb{R}^4\times S^3$, as well as the BS and Bogoyolavenskaya cases, we study the conditions for $A$ to extend smoothly over the singular orbit $SU(2)^2 / \Delta SU(2) \cong  \{0\}\times S^3 $. \\
As in $\S$\ref{ss:BSinits}, we have two bundles $P_{\lambda}$ as in \eqref{eq:Plambda} to consider,  
where $\lambda: \Delta SU(2) \rightarrow SU(2)$ is either trivial $\lambda=1$ or the identity $\lambda=\id$. Recall from Proposition \ref{prop:ODEsSU(2)xU(2)} that $A$ takes the form in \eqref{eq:InvariantConnection.BGGG}, determined by functions $f^{\pm},g^{\pm}:\mathbb{R}_{\geq 0}\to\mathbb{R}$. The next result gives the conditions on $f^{\pm}, g^{\pm}$ so that such $A$ extends over a singular orbit at $t=0$. To state it, we observe that Lemma \ref{lem:ExtendingSmoothlyMetric} in Appendix \ref{app:R4xS3} shows that for any $SU(2)^2 \times U(1)$-invariant $G_2$-metric which smoothly extends over a singular orbit at $t=0$ must be of the form \eqref{eq:metric} for functions $A_1$, $A_2=A_3$, $B_1$, $B_2=B_3$ which admit Taylor expansions of the form 
\begin{equation}\label{eq:metric.expansion}
A_i(t)=\frac{t}{2} + t^3 C_i(t)\quad\text{and}\quad B_i(t) = b + t^2 D_i(t),
\end{equation} for some $b \in \mathbb{R}\setminus\{0\}$ and real analytic even functions $C_1$, $C_2=C_3$, $D_1$, $D_2=D_3$ with $D_1(0)=D_2(0)$.  
The explicit values of $C_1(0)$, $C_2(0)$ and $D_1(0)$ for the BS and BGGG cases can be found in Examples \ref{ex:BS}-\ref{ex:BGGG} in Appendix \ref{app:R4xS3}.

\begin{lemma}\label{lem:SmoothlyExtendBGGG}
The connection $A$ in \eqref{eq:InvariantConnection.BGGG} extends smoothly over the singular orbit $S^3$ if and only if  $f^+$ and $g^+$ are odd, $f^-$ and $g^-$ are even, and,  using the notation in \eqref{eq:metric.expansion}, their Taylor expansions around  $t=0$ are:
\begin{itemize}\item either
\begin{align*}
f^{-}&= 
f_2^- t^2 + O(t^4), &  g^-&= 
g_2^- t^2 + O(t^4),\\ \nonumber
f^{+}&= f_1^+ t + O(t^3), &  g^+&= g_1^+ t + O(t^3),
\end{align*}
in which case $A$ extends smoothly as a connection on $P_{1}$; 
\item or
\begin{align*}
f^{-}&= b_0^- + b_2^- t^2 + O(t^4) , &  g^-&= b_0^- + b_2^- t^2 + O(t^4), \\ \nonumber
f^{+}&= \frac{2}{t} + (b_2^+-4C_1(0) ) t + O(t^3) , &   g^+&= \frac{2}{t} + (b_2^+-4C_2(0)) t + O(t^3),
\end{align*}
in which case $A$ extends smoothly as a connection on $P_{\id}$.
\end{itemize}
\end{lemma}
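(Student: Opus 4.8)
The plan is to reduce everything to the one-form extension criterion of Lemma \ref{lem:ExtendConnectionOnBS} in Appendix \ref{app:R4xS3}, following the template of the proof of Lemma \ref{lem:SmoothlyExtendBS}. First I would note that over the singular orbit $S^3 = SU(2)^2/\Delta SU(2)$ there are exactly the two homogeneous extensions $P_1$ and $P_{\id}$ of \eqref{eq:Plambda}, with canonical invariant connections $A^{\mathrm{can}}=0$ when $\lambda=1$ and $A^{\mathrm{can}}=\sum_{i=1}^3 T_i\otimes\eta_i^+$ when $\lambda=\id$ (the reductive splitting $\mathfrak{su}(2)^2=\mathfrak{su}^+\oplus\mathfrak{su}^-$ as in \eqref{eq:A.can}, since the isotropy is $\Delta SU(2)$ with Lie algebra $\mathfrak{su}^+$). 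Since $A$ extends smoothly on $P_\lambda$ precisely when the $\mathfrak{su}(2)$-valued one-form $A-A^{\mathrm{can}}$ does, in each case I would feed $A-A^{\mathrm{can}}$, written out from \eqref{eq:InvariantConnection.BGGG}, into Lemma \ref{lem:ExtendConnectionOnBS}.

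The criterion forces the coefficient functions $A_1 f^+,A_2 g^+,B_1 f^-,B_2 g^-$ to be even in $t$; combined with $A_i$ odd and $B_i$ even from \eqref{eq:metric.expansion}, this immediately gives that $f^+,g^+$ are odd and $f^-,g^-$ are even. Plugging the expansions \eqref{eq:metric.expansion} into the limiting conditions then converts them to leading Taylor coefficients. In the $P_1$ case the vanishing of $A_1 f^+,\ldots,B_2 g^-$ at $t=0$ yields $f^+,g^+=O(t)$ and $f^-,g^-=O(t^2)$. In the $P_{\id}$ case the condition $A_1 f^+\to 1$ forces the $2/t$ term of $f^+$, and, writing $A_1 f^+=1+\tfrac{1}{2}b_2^+ t^2+O(t^4)$, the expansion $A_1=\tfrac{t}{2}+t^3 C_1(t)$ translates the $t^2$-coefficient into the stated $t$-coefficient $b_2^+-4C_1(0)$ of $f^+$ (and analogously $b_2^+-4C_2(0)$ for $g^+$).

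The delicate point — and the step I expect to be the main obstacle — is the \emph{matching} built into the statement: that $f^-$ and $g^-$ share the same value $b_0^-$ at $t=0$, and that $f^+$ and $g^+$ carry the \emph{same} parameter $b_2^+$, rather than four independent constants. These identifications are not component-wise consequences of evenness and the limits, but come from the way Lemma \ref{lem:ExtendConnectionOnBS} encodes the isotropy representation of $\Delta SU(2)$ on the normal $\mathbb{R}^4=\mathbb{H}$ together with the bundle twisting $\lambda$. At leading order one sees the mechanism concretely: the restriction of $A$ to $\{0\}\times S^3$ is $f^-(0)T_1\otimes\eta_1^-+g^-(0)(T_2\otimes\eta_2^-+T_3\otimes\eta_3^-)$, and for $\lambda=\id$ the isotropy acts on both $\mathfrak{su}^-$ and the target $\mathfrak{su}(2)$ as the irreducible adjoint, so Schur's lemma forces this to be a multiple of the unique invariant $\sum_i T_i\otimes\eta_i^-$, giving $f^-(0)=g^-(0)$; for the untwisted $\lambda=1$ the target is fixed by the isotropy and the same argument instead forces $f^-(0)=g^-(0)=0$, explaining why no nonzero matching survives on $P_1$. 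The shared $b_2^+$ on $P_{\id}$ is the analogous, higher-order statement in the $\eta^+$-directions, and the precise orders at which these identifications cease (so that $f^\pm,g^\pm$ may legitimately differ at $O(t^3)$, $O(t^4)$, consistently with the ODEs \eqref{eq:dotf+}-\eqref{eq:dotg-}) are exactly the indicial thresholds supplied by Lemma \ref{lem:ExtendConnectionOnBS}. Finally, because that lemma is an if-and-only-if characterization, sufficiency is automatic: any $(f^\pm,g^\pm)$ with the stated expansions reassembles into a one-form $A-A^{\mathrm{can}}$ that extends smoothly, hence into a smooth connection on $P_\lambda$.
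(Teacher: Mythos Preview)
Your proposal is correct and follows essentially the same route as the paper: reduce to Lemma \ref{lem:ExtendConnectionOnBS} applied to $A-A^{\mathrm{can}}$ on each bundle, then use the metric expansions \eqref{eq:metric.expansion} to translate the evenness and leading-coefficient conditions on $A_if^+,\,A_ig^+,\,B_if^-,\,B_ig^-$ into the stated Taylor expansions of $f^\pm,g^\pm$. Your Schur's-lemma explanation of the matching is additional intuition the paper does not spell out; the paper simply reads the shared constants $b_2^+$, $b_0^-$, $b_2^-$ directly from the $i$-independence built into Lemma \ref{lem:ExtendConnectionOnBS}, and for the $t^2$-coefficient of $f^-,g^-$ also invokes $D_1(0)=D_2(0)$ from \eqref{eq:metric.expansion}, a small technical step you should make explicit when writing it up.
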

\begin{proof}  
We start with $P_{\id}$, i.e.~where the homomorphism $\lambda=\id$, as it is slightly more involved. The canonical invariant connection on $P_{\id} \rightarrow S^3$ is $A^{\text{can}}$ in \eqref{eq:A.can}. 
 We must then apply Lemma \ref{lem:ExtendConnectionOnBS} from Appendix \ref{app:R4xS3} to the $\mathfrak{su}(2)$-valued $1$-form
\begin{eqnarray}\nonumber
A-A^{\text{can}} & = & (A_1 f^{+}-1)T_1\otimes \eta_1^{+}  + (A_2 g^+ -1) (T_2\otimes\eta_2^{+} + T_3\otimes\eta_3^{+} ) \\ \nonumber
& & + B_1 f^{-} T_1\otimes\eta_1^{-} + B_2 g^- (T_2\otimes\eta_2^{-}  + T_3\otimes\eta_3^{-}).
\end{eqnarray}
We deduce that  $A_1 f^{+}-1$, $A_2 g^+ -1$, $B_1 f^{-}$ and $B_2 g^-$ are all even. Moreover, the first two of these must admit Taylor expansions of the form $\frac{b_2^+}{2} t^2 + O(t^4)$, for some $b_2^+ \in \mathbb{R}$, while the last two have expansions of the form $\frac{b_0^-}{2} + \frac{b_2^-}{2} t^2 + O(t^4)$, for some $b_0^{-}$, $b_2^{-} \in \mathbb{R}$. 
Using \eqref{eq:metric.expansion}, we deduce that
$$f^{+}= \frac{2}{t} + (b_2^+ -4C_1(0) ) t + O(t^3), \quad g^+= \frac{2}{t} + (b_2^+ -4C_2(0)) t + O(t^3),$$
and, since $D_1(0)=D_2(0)$ and $b\neq 0$, we have $f^-=h^-+O(t^4)$, $g^-=h^-+O(t^4)$ where
$$h^-=\frac{b_0^-}{2b} + \left( \frac{b_2^-}{2b} -\frac{b_0^-}{2b^2} D_1(0)   \right)t^2.$$
The statement for $P_{\id}$ then follows.\\
We now turn to $P_1$.  Here we instead apply Lemma \ref{lem:ExtendConnectionOnBS} to the $1$-form $A$ itself and conclude that $A_1 f^{+}$, $A_2 g^+$, $B_1 f^{-}$, $B_2 g^-$ must all be even and vanish at $t=0$.  
Hence, by \eqref{eq:metric.expansion}, we see that $f^+,g^+$ are odd and $f^-,g^-$ are even such that $f^-(0)=g^-(0)=0$.
\end{proof}

\begin{remark}
We have also determined the conditions for smooth extension to the singular orbit on the Bazaikin--Bogoyavlenskaya (BB)
 $G_2$-manifolds from $\S$\ref{sss:BB}.  This is more involved and we shall report on it in future work.
\end{remark}

\subsection{Solutions}\label{ss:solutions.BGGG}

We now investigate  existence of solutions of the $SU(2)^2\times U(1)$-invariant $G_2$-instanton equations with gauge group $SU(2)$ on the BS, BGGG and Bogoyavlenskaya $G_2$-manifolds $\mathbb{R}^4\times S^3$.  There are two cases: when the bundle is $P_1$ or $P_{\id}$, in the notation of the previous subsection.  
In both cases we explicitly classify the invariant $G_2$-instantons 
defined near the singular orbit which extend smoothly and, as a consequence, extend our uniqueness result for $G_2$-instantons on the BS $\mathbb{R}^4\times S^3$ to the case of $SU(2)^2 \times U(1)$-symmetry, and  obtain both \emph{existence} and \emph{non-existence} results 
for $G_2$-instantons  with decaying curvature on the BGGG $\mathbb{R}^4\times S^3$.

\subsubsection[\texorpdfstring{Solutions smoothly extending on $P_{1}$}{Solutions smoothly extending on P1}]{Solutions smoothly extending on {\boldmath $P_{1}$}}

We shall now investigate the existence of solutions that smoothly extend over the singular orbit $S^3 = SU(2)^2 / \Delta SU(2)$ on the bundle $P_1$. The main results are Proposition \ref{prop:LocalG2Instantons} and Theorems \ref{thm:BS.cor}--\ref{thm:BGGG.Inst}.  Proposition \ref{prop:LocalG2Instantons} shows the existence of a $2$-parameter family of $G_2$-instantons in a neighborhood of the singular orbit, so there is at most a 2-parameter family of $SU(2)^2\times U(1)$-invariant $G_2$-instantons on $P_1$ on the BS, BGGG and Bogoyavlenskaya $G_2$-manifolds.
Theorem \ref{thm:BS.cor} shows that in the BS case, just a 1-parameter family of these local instantons extends, and these are either given by Clarke's $SU(2)^3$-invariant examples from Theorem \ref{prop:Clarke} or are abelian.     Theorems \ref{thm:BGGG.NoInst} and \ref{thm:BGGG.Inst} 
show that, unlike the BS case, there is a 2-parameter family of local $G_2$-instantons which  extend to the whole BGGG $\mathbb{R}^4\times S^3$ so that their curvature is bounded, as well as a 2-parameter family which do not extend so as to have bounded curvature.

\begin{proposition}\label{prop:LocalG2Instantons}
Let $X \subset \mathbb{R}^4\times S^3 $ contain the singular orbit $\{0\}\times S^3 $ of the $SU(2)^2 \times U(1)$ action and be equipped with an $SU(2)^2 \times U(1)$-invariant holonomy $G_2$-metric.  There is a $2$-parameter family of $SU(2)^2 \times U(1)$-invariant $G_2$-instantons $A$ with gauge group $SU(2)$ in a neighbourhood of the singular orbit in $X$  smoothly extending over $P_{1}$.\\
Moreover, in the notation of Proposition \ref{prop:ODEsSU(2)xU(2)} and \eqref{eq:metric.expansion}, any such $G_2$-instanton $A$ can be written as in \eqref{eq:InvariantConnection.BGGG}  with $f^-=0=g^-$ and with 
$f^+$, $g^+$ solving the ODEs:
\begin{eqnarray}\label{eq:InstantonODE1}
\dot{f}^+ +  
\frac{1}{2}\left(\frac{A_1}{B_2^2}-\frac{A_1}{A_2^2}\right)f^+ & = & - (g^+)^2, \\ \label{eq:InstantonODE2}
\dot{g}^+ +  
\frac{1}{2}\left(\frac{A_2^2+B_1^2+B_2^2}{A_2B_1B_2}-\frac{A_1^2+2A_2^2}{A_1A_2^2}\right)g^+ & = & - f^+ g^+ ,
\end{eqnarray}
subject to $f^+(t)=f_1^+ t+ t^3 u_1(t)$, $g^+(t)=g_1^+ t+ t^3 u_2(t)$,
where $f_1^+,g_1^+ \in \mathbb{R}$ and the $u_i$ are real analytic functions such that
\begin{eqnarray}\label{eq:u1}
u_1(0) &=& -f_1^+\left( \frac{1}{8 b^2} + 2C_2(0)-C_1(0) \right) -\frac{(g^+_1)^2}{2} , \\ 
u_2(0)&=& -\frac{g_1^+}{2}\left(\frac{1}{4 b^2} + 2 C_1(0) + f_1^+ \right).\label{eq:u2}
\end{eqnarray}
\end{proposition}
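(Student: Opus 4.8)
The plan is to split the argument into two parts: first show that smooth extension over $P_1$ forces $f^- \equiv 0 \equiv g^-$, which reduces the four equations \eqref{eq:dotf+}--\eqref{eq:dotg-} to the pair \eqref{eq:InstantonODE1}--\eqref{eq:InstantonODE2}; then apply the existence and uniqueness theory for regular singular points to this pair, both to produce the two-parameter family and to pin down the initial data \eqref{eq:u1}--\eqref{eq:u2}.

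For the first part, the key structural observation is that the subsystem \eqref{eq:dotf-}--\eqref{eq:dotg-} for $(f^-, g^-)$ is \emph{linear and homogeneous} in $(f^-, g^-)$: each right-hand side term, namely $2g^- g^+$, $g^+ f^-$ and $f^+ g^-$, is a coefficient built from $f^+, g^+$ times one of $f^-, g^-$. Writing this subsystem as $t\,\dot w = N(t)\,w$ with $w = (f^-, g^-)^{T}$, I would use the expansions \eqref{eq:metric.expansion} together with $f^+, g^+ = O(t)$ (from Lemma \ref{lem:SmoothlyExtendBGGG}) to check that $N(t)$ is analytic at $t=0$ with $N(0) = \diag(-4,-4)$; the coupling contributions of $f^+, g^+$ to $N(t)$ are all $O(t^2)$ and so do not affect the leading matrix. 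Since the indicial roots are both $-4<0$, every nonzero solution blows up as $t\to 0^+$, so the only solution bounded at the origin is $w\equiv 0$. As Lemma \ref{lem:SmoothlyExtendBGGG} requires $f^-, g^-$ to be even and to vanish at $t=0$ for extension over $P_1$, this forces $f^- \equiv 0 \equiv g^-$. (Equivalently, one may run a minimal-order Taylor argument: if $t^{2m}$ and $t^{2l}$ were the lowest surviving powers in $f^-$ and $g^-$, then because every coupling term is of strictly higher order, the leading balance forces the lowest coefficient to vanish, a contradiction.) Substituting $f^- = g^- = 0$ into \eqref{eq:dotf+}--\eqref{eq:dotg+} then yields exactly \eqref{eq:InstantonODE1}--\eqref{eq:InstantonODE2}, while \eqref{eq:dotf-}--\eqref{eq:dotg-} and the constraint \eqref{eq:c.constraint} hold automatically.

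For the second part, I would set $f^+ = f_1^+ t + t^3 u_1(t)$ and $g^+ = g_1^+ t + t^3 u_2(t)$, as dictated by the $P_1$ expansions in Lemma \ref{lem:SmoothlyExtendBGGG}, and rewrite \eqref{eq:InstantonODE1}--\eqref{eq:InstantonODE2} as a first-order system for $(u_1, u_2)$. The bulk of the work is expanding the two coefficient functions using \eqref{eq:metric.expansion}: each turns out to have leading part $-\tfrac1t$ together with an $O(t)$ correction whose value at $t=0$ involves $b$, $C_1(0)$ and $C_2(0)$. After dividing by $t^2$, the system takes the regular-singular form
\begin{align*}
\dot u_1 &= \frac{-2u_1 - f_1^+\big(\tfrac{1}{4b^2} - 2C_1(0) + 4C_2(0)\big) - (g_1^+)^2}{t} + f_1(t,u_1,u_2),\\
\dot u_2 &= \frac{-2u_2 - g_1^+\big(\tfrac{1}{4b^2} + 2C_1(0)\big) - f_1^+ g_1^+}{t} + f_2(t,u_1,u_2),
\end{align*}
with $f_1, f_2$ analytic up to $t=0$. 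Requiring the $O(t^{-1})$ terms to vanish at $t=0$ gives precisely \eqref{eq:u1}--\eqref{eq:u2}, and the linear part of these numerators is $\diag(-2,-2)$, whose eigenvalues $-2$ are not positive integers. Hence the existence and uniqueness theorem for regular singular points (\cite{Malgrange1974}, or Theorem~4.7 of \cite{Foscolo2015}) applies: for each $(f_1^+, g_1^+)\in\mathbb{R}^2$ there is a unique analytic $(u_1,u_2)$ with the prescribed values at $0$, giving a two-parameter family of local $G_2$-instantons extending over $P_1$. Conversely, any $P_1$-extending instanton has $f^- = g^- = 0$ by the first part and so solves \eqref{eq:InstantonODE1}--\eqref{eq:InstantonODE2} with an expansion of the above form, hence coincides with a member of the family by uniqueness.

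The main obstacle is bookkeeping rather than conceptual: reliably extracting the $O(t)$ corrections of the coefficient functions from \eqref{eq:metric.expansion} (including the cancellation of the $D_i(0)$ terms, which uses $D_1(0)=D_2(0)$) so as to land exactly on the constants in \eqref{eq:u1}--\eqref{eq:u2}. The one genuinely structural point is recognizing that the $(f^-,g^-)$ equations form a homogeneous linear system with negative indicial roots, which is what makes $f^- = g^- = 0$ automatic and collapses the problem to the two scalar equations.
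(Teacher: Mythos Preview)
Your proof is correct and reaches the same conclusions as the paper, but you organize the argument in a different order. The paper treats all four functions at once: it writes $f^- = t^2 v_1$, $g^- = t^2 v_2$ alongside your $u_1, u_2$, obtains a single four-variable regular singular system for $X=(u_1,u_2,v_1,v_2)$ whose linearization $dM_{-1}$ has eigenvalues $-2,-2,-6,-6$, and applies Malgrange's existence and uniqueness theorem directly to that. The conditions $v_1(0)=v_2(0)=0$ fall out of $M_{-1}(X(0))=0$, and the paper then notes that the reduced two-variable problem with $f^-=g^-=0$ enjoys the same existence and uniqueness, so uniqueness for the full problem forces $f^-\equiv g^-\equiv 0$. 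You instead isolate the $(f^-,g^-)$ subsystem first and kill it via its linear homogeneous structure and the negative indicial roots $-4,-4$; this is a more direct route to $f^-=g^-\equiv 0$ and makes transparent why those two components cannot carry free parameters. The paper's approach is more uniform (a single application of the singular IVP theorem to a larger system), whereas yours splits into two steps, each of which is individually simpler.
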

\begin{proof}
It is convenient to study our initial value problem by writing 
\begin{align*}
f^+(t)&=f_1^+ t+ t^3 u_1(t), & g^+(t)&=g_1^+ t+ t^3 u_2(t),\\
f^-(t)&= 
t^2 v_1(t), & g^{-}(t)&= t^2 v_2(t),
\end{align*}
for some real analytic functions $u_1,u_2,v_1,v_2$, which we can do by Lemma \ref{lem:SmoothlyExtendBGGG}. In this way the ODEs for $G_2$-instantons from Proposition \ref{prop:ODEsSU(2)xU(2)} turn into ODEs for these $4$ functions, which we write as $X(t)=(u_1(t),u_2(t), v_1(t), v_2(t))$.  A lengthy but otherwise straightforward computation yields 
the regular singular initial value problem at $t=0$:
$$\frac{dX}{dt}= \frac{M_{-1}(X)}{t}+M(t,X),$$
where $M(t,X)$ is real analytic in the first coordinate and
\begin{align*}
M_{-1}(X)=\Big( &  -2 u_1 - \left( \frac{1}{4 b^2} + 4C_2(0)-2C_1(0) \right) f_1^+ - (g_1^+)^2 ,\\
 &-2u_2 - \left(\frac{1}{4 b^2} + 2C_1(0)+f_1^+ \right) g_1^+ , -6 v_1, -6 v_2 \Big).
  \end{align*}  
The existence and uniqueness theorem for singular initial value problems  
(\cite{Malgrange1974}, see also Theorem 4.7 in \cite{Foscolo2015} for a clearer statement) applies if and only if $M_{-1}(X(0))=0$ and $dM_{-1}(X(0))$ does not have any positive integer as an eigenvalue.   Since $dM_{-1}(X(0))$ is diagonal with eigenvalues $-2,-2,-6,-6$,
 we only need 
$v_1(0)=0=v_2(0)$ and $u_1(0)$, $u_2(0)$ as in \eqref{eq:u1}-\eqref{eq:u2} to apply the existence and uniqueness theorem: this determines the possible initial values $X(0)$, which are therefore parametrized by  $f_1^+ , g_1^+ \in \mathbb{R}$.  
 We conclude that there is a local two-parameter family of $G_2$-instantons with $SU(2)^2 \times U(1)$-symmetry as claimed.\\ 
Notice that all these $G_2$-instantons have  $v_1(0)=0=v_2(0)$.  
Thus, setting the smaller singular initial value problem above with $f^-$ and $g^-$ both vanishing gives  
the same local existence and uniqueness result, and hence the uniqueness  implies that in fact $f^-(t)$, $g^-(t)$ must vanish identically for any solution extending smoothly over the singular orbit.  The resulting ODEs \eqref{eq:InstantonODE1}-\eqref{eq:InstantonODE2} then follow from Proposition \ref{prop:ODEsSU(2)xU(2)}. 
\end{proof}

\begin{remark}
Recall that the BS, BGGG and Bogoyavlenskaya  
$G_2$-metrics all have $SU(2)^2 \times U(1)$-symmetry and so Proposition \ref{prop:LocalG2Instantons} yields $G_2$-instantons in these cases.
\end{remark}

Our first result shows that the sign of $g_1^+$ determines the sign of $g^+$.

\begin{lemma}\label{lem:g+sign}  Let $(f^+,g^+)$ solve \eqref{eq:InstantonODE1}--\eqref{eq:InstantonODE2}.  
The sign of $g^+$ does not change as long as $f^+$ does not blow up, and if $g^+(t_0)=0$ for some $t_0> 0$ or if $g_1^+=0$ then $g^+\equiv 0$.  
\end{lemma}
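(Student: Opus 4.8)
The key observation is that equation \eqref{eq:InstantonODE2} is \emph{linear} in $g^+$ once we regard $f^+$ as a known function. Writing it as
\begin{equation*}
\dot{g}^+ = -\left[ \tfrac{1}{2}\left(\tfrac{A_2^2+B_1^2+B_2^2}{A_2B_1B_2}-\tfrac{A_1^2+2A_2^2}{A_1A_2^2}\right) + f^+ \right] g^+,
\end{equation*}
we see that $g^+$ satisfies a homogeneous first-order linear ODE of the form $\dot{g}^+ = P(t)\, g^+$, where $P(t)$ is the coefficient in brackets. The plan is to exploit this linearity directly.

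\textbf{Proving that the sign is preserved.}
On any interval $[t_0, t_1]$ where $f^+$ does not blow up, the coefficient $P(t)$ is continuous (away from the singular orbit the metric functions $A_i, B_i$ are smooth and bounded away from zero on compact sets, so the metric-dependent part of $P$ is continuous; continuity of $f^+$ is exactly the no-blow-up hypothesis). First I would integrate: for $t$ in such an interval,
\begin{equation*}
g^+(t) = g^+(t_0)\,\exp\!\left( \int_{t_0}^{t} P(s)\, ds \right).
\end{equation*}
Since the exponential factor is strictly positive, $g^+(t)$ has the same sign as $g^+(t_0)$ throughout, and in particular $g^+$ cannot change sign without $f^+$ blowing up. This handles the first assertion.

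\textbf{The vanishing statements.}
If $g^+(t_0)=0$ for some $t_0>0$, then the integral formula gives $g^+(t)=0$ on the interval around $t_0$ where $f^+$ stays finite; alternatively, one invokes uniqueness for the linear ODE $\dot g^+ = P(t) g^+$ with zero initial data, forcing $g^+\equiv 0$ wherever the solution is defined. For the case $g_1^+=0$, the issue is that $t_0=0$ is a singular point of the ODE, so the naive integral diverges and I cannot simply evaluate $g^+(0)$. Here the main obstacle is controlling the behaviour at the singular orbit. The route is to use the expansion from Lemma \ref{lem:SmoothlyExtendBGGG} and Proposition \ref{prop:LocalG2Instantons}: we have $g^+(t)=g_1^+ t + t^3 u_2(t)$, so $g_1^+=0$ means $g^+(t)=t^3 u_2(t)$ with $u_2(0)$ given by \eqref{eq:u2}, which itself vanishes when $g_1^+=0$. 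Thus $g^+$ vanishes to high order at $t=0$. The cleanest argument is to appeal to the \emph{uniqueness} part of the regular-singular initial value problem in Proposition \ref{prop:LocalG2Instantons}: the pair $(f^+, g^+\equiv 0)$ is itself a solution of the singular initial value problem with the same initial data $(f_1^+, 0)$ (one checks \eqref{eq:InstantonODE1} closes with $g^+=0$ and \eqref{eq:InstantonODE2} is trivially satisfied), so by uniqueness any solution with $g_1^+=0$ must coincide with it, giving $g^+\equiv 0$. This sidesteps the singularity entirely by reducing to the uniqueness already established.
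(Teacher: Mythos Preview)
Your proof is correct and follows essentially the same approach as the paper: both exploit that \eqref{eq:InstantonODE2} is a first-order linear ODE in $g^+$ (with $f^+$ regarded as a known coefficient), so uniqueness forces $g^+\equiv 0$ once it vanishes anywhere. Your treatment of the $g_1^+=0$ case via the uniqueness in Proposition~\ref{prop:LocalG2Instantons} is in fact more careful than the paper's terse ``the same argument'', which glosses over the singularity of the coefficient at $t=0$.
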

\begin{proof}
Suppose, for a contradiction, that the sign of $g^+$ changes.  Then there is $t_0> 0$ such that $g^+(t_0)=0$. 
The ODE \eqref{eq:InstantonODE2} implies that 
$\dot{g}^+(t_0)=0$ and thus $g^+\equiv 0$ (as $g^+$ solves a linear first order ODE), giving our contradiction.  The same argument using \eqref{eq:InstantonODE2} yields the statement. 
\end{proof}

\begin{remark}\label{g+sign.rmk}
The ODEs \eqref{eq:InstantonODE1}--\eqref{eq:InstantonODE2} are invariant under $g^+\mapsto -g^+$. We may therefore   exchange $g^+$ with $-g^+$ and, by virtue of Lemma \ref{lem:g+sign}, assume that  $g_1^+\geq 0$, and thus $g^+\geq 0$,  if we wish.
\end{remark}

\noindent We first focus on the BS $G_2$-manifold $\mathbb{R}^4\times S^3$.  It follows from Proposition \ref{prop:LocalG2Instantons}  that there is at most a 2-parameter family of $SU(2)^2\times U(1)$-invariant $G_2$-instantons defined globally on the BS $G_2$-manifold.  We have a 1-parameter family of such instantons (with more symmetry) from Theorem \ref{thm:Clarke} and a 1-parameter family of abelian examples from Corollary \ref{cor:U1}.  We now show that these examples provide a complete classification.

\begin{theorem}\label{thm:BS.cor}
Let $A$ be a $SU(2)^2\times U(1)$-invariant $G_2$-instanton with gauge group $SU(2)$ on the BS $G_2$-manifold $\mathbb{R}^4\times S^3$ which extends smoothly on $P_1$.  Either $A$ is $SU(2)^3$-invariant, and so is given in Theorem \ref{thm:Clarke}; or it is reducible, in which case it has gauge group $U(1)$ and is given in Corollary \ref{cor:U1}(a) with $x_2=x_3=0$.
\end{theorem}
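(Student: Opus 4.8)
The plan is to reduce the whole problem to the pair $(f^+,g^+)$ and to exploit a first integral that is special to the BS metric. By Proposition~\ref{prop:LocalG2Instantons}, the hypothesis that $A$ extends smoothly over $P_1$ already forces $f^-\equiv g^-\equiv 0$, so that $A$ is determined by $(f^+,g^+)$ solving \eqref{eq:InstantonODE1}--\eqref{eq:InstantonODE2}. The key observation is that in the BS case $A_1=A_2$ and $B_1=B_2$, whence a short computation shows that the two coefficient functions in \eqref{eq:InstantonODE1} and \eqref{eq:InstantonODE2} coincide: both equal $P:=\tfrac12\bigl(\tfrac{A_1}{B_1^2}-\tfrac1{A_1}\bigr)$, and by \eqref{eq:ODEsMetricBryantSalamon} one has $P=-\dot A_1/A_1$. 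The system therefore collapses to $\dot f^+=-Pf^+-(g^+)^2$ and $\dot g^+=-Pg^+-f^+g^+$.

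The main structural input is then a conserved quantity. Differentiating $(f^+)^2-(g^+)^2$ and using both equations gives $\frac{d}{dt}\bigl((f^+)^2-(g^+)^2\bigr)=-2P\bigl((f^+)^2-(g^+)^2\bigr)$, and since $-2P=2\dot A_1/A_1$ this integrates to $(f^+)^2-(g^+)^2=\kappa A_1^2$ for a constant $\kappa=4\bigl((f^+_1)^2-(g^+_1)^2\bigr)$ read off from the initial data. By Lemma~\ref{lem:g+sign} and Remark~\ref{g+sign.rmk} I may assume $g^+\ge 0$, so that either $g^+\equiv 0$ or $g^+>0$ on $(0,\infty)$. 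In the first case $\dot f^+=-Pf^+$ forces $f^+\propto A_1$; the connection is then valued in the single line $\langle T_1\rangle$, hence reducible with gauge group $U(1)$, and comparison with the explicit formula identifies it with Corollary~\ref{cor:U1}(a) with $x_2=x_3=0$.

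It remains to treat $g^+>0$, where the aim is to prove $f^+\equiv g^+$, i.e.\ $SU(2)^3$-invariance and hence Theorem~\ref{thm:Clarke}. Here I would pass to $F:=f^+/A_1$, $H:=g^+/A_1$, which satisfy the clean system $\dot F=-A_1H^2$, $\dot H=-A_1FH$, subject to the first integral $F^2-H^2=\kappa$. If $\kappa=0$ then $f^+=\pm g^+$: the sign $f^+=g^+$ is Clarke, while $f^+=-g^+$ makes $H$ solve $\dot H=A_1H^2$, which blows up at the finite time where $\int_0^t A_1$ first reaches $1/H(0)$ (note $\int_0^{\infty}A_1=\infty$), so this branch cannot extend globally. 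If $\kappa<0$, writing $y=f^++g^+=A_1\sigma$ yields $\dot\sigma=\tfrac{A_1}{2}(\kappa-\sigma^2)$ with $\kappa-\sigma^2<0$ throughout, forcing $\sigma\to-\infty$ in finite time and again precluding a global solution.

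The hard part, and where I expect the main obstacle to lie, is excluding the remaining possibility $\kappa>0$ with $g^+>0$: the first integral by itself does \emph{not} produce finite-time blow-up in this regime (such solutions have $H\to 0$ and $F\to\sqrt{\kappa}$, i.e.\ they become asymptotically abelian), so ruling them out cannot be done by a purely local ODE argument and must use the global/asymptotic structure of the instanton. Concretely, I would invoke the asymptotic analysis behind Proposition~\ref{prop:asym}: a globally defined irreducible instanton on the asymptotically conical BS manifold whose curvature decays must limit onto the canonical pseudo-Hermitian--Yang--Mills connection $a_{\infty}$ of \eqref{eq:ainfty}, for which $f^+$ and $g^+$ have the same leading behaviour, forcing $f^+/g^+\to 1$ and hence $\kappa=0$. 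The $\kappa>0$ solutions violate this — their ratio $f^+/g^+$ diverges and their curvature fails to decay — so they are excluded, leaving only $f^+\equiv g^+$. Pinning down this asymptotic dichotomy rigorously (rather than the clean $\kappa=0,\ \kappa<0$ cases) is the crux of the argument.
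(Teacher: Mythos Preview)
Your setup via the first integral is exactly the paper's: with $F=f^+/A_1$, $G=g^+/A_1$ and $ds/dt=A_1$, one gets $F'=-G^2$, $G'=-FG$ and the conservation law $F^2-G^2=c$; your $\kappa$ is the paper's $c$. Your treatment of $c\le 0$ matches (though for $c=0$ the case $f^+=-g^+$ is disposed of more cleanly by the symmetry $g^+\mapsto -g^+$ of Remark~\ref{g+sign.rmk} than by a separate blow-up argument).

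The divergence is at $c>0$. You conclude this ``cannot be done by a purely local ODE argument'' and reach for asymptotic structure and curvature decay, neither of which is hypothesised in the theorem and neither of which Proposition~\ref{prop:asym} supplies in the generality you need (that proposition only computes asymptotics for the explicit instantons $A^{x_1}$ and $A^{\lim}$). The paper's move is much simpler: substitute $G^2=F^2-c$ back into $F'=-G^2$ to obtain the single scalar Riccati equation $F'=c-F^2$ and integrate it explicitly. You essentially had this in hand already, since your equation $\dot\sigma=\tfrac{A_1}{2}(\kappa-\sigma^2)$ for $\sigma=F+G$ is the same Riccati equation and is valid for all $\kappa$, not only $\kappa<0$. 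The paper then asserts that for $c>0$ the non-constant solutions satisfy $F^2=c\tanh^2(a+\sqrt c\,s)<c$, contradicting $G^2=F^2-c\ge 0$, so only the constant solutions $F^2=c$ survive and these force $G\equiv 0$, i.e.\ the abelian case. Your worry about global trajectories with $F\to\sqrt{c}$ and $G\to 0$ is exactly the $\coth$ branch of this Riccati equation (initial data $F(0)>\sqrt{c}$), which the paper's stated $\tanh$/constant dichotomy does not obviously cover --- so your instinct that this case deserves care is not unfounded --- but the intended route is the elementary explicit integration of $F'=c-F^2$, not the asymptotic machinery you propose.
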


\begin{proof}
In the BS case, using \eqref{eq:ODEsMetricBryantSalamon}, we see that \eqref{eq:InstantonODE1}-\eqref{eq:InstantonODE2} are now
\begin{align}\label{eq:BS.cor.1}
\dot{f}^+-\frac{\dot{A}_1}{A_1}f^+=-(g^+)^2\quad\text{and}\quad
\dot{g}^+-\frac{\dot{A}_1}{A_1}g^+=-f^+g^+.
\end{align}
Let $F=f^+/A_1$ and $G=g^+/A_1$, and define $s\in [0,\infty)$ by $\frac{ds}{dt}=A_1$.  If we let $f'=\frac{df}{ds}$ then \eqref{eq:BS.cor.1} is equivalent to
\begin{equation}\label{eq:BS.cor.2}
F'=-G^2\quad\text{and}\quad G'=-FG.
\end{equation}
It follows from \eqref{eq:BS.cor.2} that $F^2-G^2=c\in\mathbb{R}$, so we need only consider the ODE
\begin{equation}\label{eq:BS.cor.3}
F'=c-F^2.
\end{equation}
  
\noindent If $c<0$, the solutions to \eqref{eq:BS.cor.3} satisfy $F^2(s)=-c\tan^2(a-\sqrt{-c} s)$, which can then only exist for finite $s$ and thus finite $t$. 

\noindent If $c>0$ there are two types of solutions to \eqref{eq:BS.cor.3}: either  $F^2(s)=c\tanh^2(a+\sqrt{c}s)$ or $F^2=c$.  The first solutions have $F^2-c<0$ which contradicts $F^2-c=G^2$. The second solutions force $G\equiv 0$, which give  abelian instantons as  in Corollary \ref{cor:U1}. 

\noindent If $c=0$, then $F^2=G^2$, which means $F=\pm G$ so $f^+=\pm g^+$.  By Remark \ref{g+sign.rmk}, we may assume that $f^+=g^+$. In this case, $A$ is $SU(2)^3$-invariant and the result then follows from Theorem \ref{thm:Clarke}.
\end{proof}
 
\noindent We now focus attention on the BGGG $G_2$-manifold, though some of our results hold for the 1-parameter family of Bogoyavlenskaya metrics which includes the BGGG metric.
It is 
natural in the study of $G_2$-instantons on non-compact $G_2$-manifolds to
 assume a decay condition on the curvature of the connection at infinity. 
The weakest reasonable assumption we can make is the curvature is bounded.  
  In this setting we can prove both existence and non-existence results.

\noindent We first observe the conditions imposed on $f^+,g^+$ for the Bogoyavlensakaya metrics when the curvature is bounded.

\begin{lemma}\label{lem:CurvatureBound}
Let $A$ be the $G_2$-instanton on one of the Bogoyavlenskaya $G_2$-manifolds induced by the pair $(f^+,g^+)$ as in Proposition \ref{prop:LocalG2Instantons}. Then $|F_A|$ is bounded only if $g^+$ is bounded, and if both $f^+$ and $g^+$ are bounded then $|F_A|$ is bounded.  
\end{lemma}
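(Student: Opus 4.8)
The plan is to write down $F_A$ explicitly, read off $|F_A|^2$ as a sum of squared norms in the orthogonal coframe, and then isolate the two curvature components that govern the growth of $g^+$. Since we are on $P_1$ we have $f^-=g^-=0$, so in the notation of \eqref{eq:InvariantConnection} the only nonzero fields are $a_1^+=A_1f^+T_1$ and $a_2^+=A_2g^+T_2$, $a_3^+=A_2g^+T_3$, with all $a_i^-=0$. Feeding these into Lemma \ref{lem:Curvature} and adding the temporal part $dt\wedge\dot a$, every mixed $\eta^+\wedge\eta^-$ term drops out, and $F_A$ reduces to: three temporal terms on $dt\wedge\eta_i^+$; three terms on $\eta_j^+\wedge\eta_k^+$, with coefficient $2((A_2g^+)^2-A_1f^+)T_1$ on $\eta_2^+\wedge\eta_3^+$ and $2A_2g^+(A_1f^+-1)T_{2,3}$ on the other two; and three terms on $\eta_j^-\wedge\eta_k^-$ with coefficients $-2A_1f^+T_1$ and $-2A_2g^+T_{2,3}$.

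Because the coframe $\{dt,\eta_i^+,\eta_i^-\}$ is orthogonal for $g=dt^2+g_t$ by \eqref{eq:metric}, with $|\eta_i^+|_t=1/(2A_i)$ and $|\eta_i^-|_t=1/(2B_i)$, the quantity $|F_A|^2$ is exactly the sum of the squared norms of these components. Up to a fixed positive factor $|T_i|^2$, the spatial $\eta^+\wedge\eta^+$ part contributes
\begin{equation*}
\Big((g^+)^2-\frac{A_1f^+}{A_2^2}\Big)^2 \qquad\text{and}\qquad (g^+)^2\Big(f^+-\frac{1}{A_1}\Big)^2
\end{equation*}
(the second from each of $i=2,3$), while the $\eta^-\wedge\eta^-$ part contributes multiples of $(A_1f^+)^2/B_2^4$ and $(A_2g^+)^2/(B_1^2B_2^2)$. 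The temporal norms involve $\dot f^+,\dot g^+$, which I eliminate using the instanton ODEs \eqref{eq:InstantonODE1}--\eqref{eq:InstantonODE2} together with the metric equations \eqref{eq:dotA1}--\eqref{eq:dotB2}; a short computation gives $\frac{1}{A_1}\frac{d}{dt}(A_1f^+)=\big(\frac{A_1}{A_2^2}-\frac{A_1}{B_2^2}\big)f^+-(g^+)^2$ and $\frac{1}{A_2}\frac{d}{dt}(A_2g^+)=g^+\big(\frac{1}{A_1}-\frac{A_2}{B_1B_2}-f^+\big)$, so the temporal norms too are explicit rational expressions in $f^+,g^+$ and the metric functions, with no derivatives remaining.

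For the first assertion, suppose $|F_A|$ is bounded; then each nonnegative summand above is bounded, and if $g^+$ were unbounded we could pick $t_n$ with $|g^+(t_n)|\to\infty$. Since $g^+=g_1^+t+O(t^3)$ near the singular orbit by Lemma \ref{lem:SmoothlyExtendBGGG}, the $t_n$ stay away from $t=0$, where $A_1,A_2$ are bounded and $A_2$ is bounded below. Boundedness of $(g^+)^2(f^+-1/A_1)^2$ then forces $(f^+-1/A_1)(t_n)\to0$, hence $A_1f^+(t_n)\to1$ — here I use that the metric is ALC, so $A_1$ converges to a positive constant and is in particular bounded. Consequently $\frac{A_1f^+}{A_2^2}(t_n)$ is bounded, so boundedness of $\big((g^+)^2-\frac{A_1f^+}{A_2^2}\big)^2$ forces $(g^+)^2(t_n)$ bounded, a contradiction. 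Hence $g^+$ is bounded.

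For the converse, assume $f^+,g^+$ are both bounded and bound each of the six families of summands from the previous paragraph. Using the ALC asymptotics ($A_1$ converging to a positive constant, $A_2,B_1,B_2\to\infty$ linearly) the $\eta^-$ terms decay like $t^{-2}$ at infinity, while the temporal and $\eta^+\wedge\eta^+$ terms tend to bounded limits once the ODE substitutions are made. The only delicate point is the apparent blow-up of $1/A_1\sim 2/t$ and of $A_2/(B_1B_2)$ as $t\to0$; this is absorbed by the vanishing $f^+,g^+=O(t)$ dictated by \eqref{eq:metric.expansion} and Lemma \ref{lem:SmoothlyExtendBGGG}, so that e.g. $g^+/A_1$ and $g^+(f^+-1/A_1)$ stay bounded at $t=0$. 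Thus $|F_A|$ is bounded. The conceptual heart of the argument is the first assertion: the two \emph{derivative-free} $\eta^+\wedge\eta^+$ components alone already force $g^+$ to be bounded, via the chain ``$(g^+)^2(f^+-1/A_1)^2$ bounded $\Rightarrow A_1f^+\to1\Rightarrow \frac{A_1f^+}{A_2^2}$ bounded $\Rightarrow (g^+)^2$ bounded''; the laborious-but-routine part is the converse, whose main difficulty is the careful bookkeeping of the coefficients that blow up at the singular orbit $t=0$ but are compensated by the boundary conditions.
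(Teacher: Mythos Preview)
Your proof is correct and arrives at the same curvature formula as the paper, but you take a slightly different route for the ``only if'' direction. The paper first uses the $\eta^-\wedge\eta^-$ term $\tfrac{A_1^2(f^+)^2}{4B_2^4}$: since $A_1\to\text{const}$ and $B_2\sim t$, boundedness of this term gives $t^{-2}f^+$ bounded, hence $\tfrac{A_1f^+}{A_2^2}$ bounded, and then the term $\big((g^+)^2-\tfrac{A_1f^+}{A_2^2}\big)^2$ bounds $g^+$ directly. Your contradiction argument instead uses only the two $\eta^+\wedge\eta^+$ components, which is a nice observation---the $\eta^-$ part of the curvature is not needed at all. The paper's path is more direct (no contradiction, and it actually yields the stronger conclusion $f^+=O(t^2)$), while yours isolates a minimal set of curvature components that already force the result.

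One small slip: you write ``the $t_n$ stay away from $t=0$, where $A_1,A_2$ are bounded''. In fact, since $g^+$ is continuous on $[0,\infty)$ and $g^+(0)=0$, any sequence with $|g^+(t_n)|\to\infty$ must have $t_n\to\infty$, and then $A_2(t_n)\to\infty$ is \emph{not} bounded. This does not damage your argument, which only requires $A_1$ bounded (so that $f^+-1/A_1\to 0$ implies $A_1f^+\to 1$) and $A_2$ bounded \emph{below} (so that $\tfrac{A_1f^+}{A_2^2}$ stays bounded); both hold by the ALC asymptotics. You should, however, state explicitly that $t_n\to\infty$ and adjust the wording accordingly.
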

\begin{proof}
The $G_2$-instanton $A$ induced by the pair $(f^+,g^+)$ has connection form as in \eqref{eq:InvariantConnection.BGGG} with $f^-=g^-=0$.  
Since $A=a(t)$, the curvature $F_A = dt \wedge \dot{a} + F_a$ of $A$ can be computed from Lemma \ref{lem:Curvature}. Notice that $\vert F_A \vert^2 = \vert \dot{a} \vert^2 + \vert F_a \vert^2$. Computing each of these terms separately we have
\begin{align*}
\vert F_a \vert^2 & =  \frac{1}{4} \left( (g^+)^2 - \frac{A_1}{A_2^2} f^+ \right)^2 + \frac{(g^+)^2}{2} \left( f^+-\frac{1}{A_1} \right)^2 
+ \frac{A_1^2 (f^+)^2}{4 B_2^4} + \frac{A_2^2 (g^+)^2}{2 B_1^2 B_2^2},\\
\vert \dot{a} \vert^2 & =  \frac{1}{4} \left( (g^+)^2-\frac{A_1 f^+}{A_2^2}+ \frac{A_1 f^+}{B_2^2} \right)^2 + \frac{(g^+)^2}{2} \left( f^+-\frac{1}{A_1} + \frac{A_2}{B_1 B_2} \right)^2,
\end{align*}
where in the second case we have used the $G_2$-instanton equations \eqref{eq:InstantonODE1}-\eqref{eq:InstantonODE2}. 

\noindent It follows from the work in \cite{Bogo2013} that, up to rescaling, 
\begin{equation}\label{eq:Bogo.asym}
\lim_{t\to\infty}A_1=1,\quad \lim_{t\to\infty}\frac{A_2}{t}=c, \quad \lim_{t\to\infty}\frac{B_1}{t}=\frac{2}{\sqrt{3}}c,\quad\lim_{t\to\infty}\frac{B_2}{t}=c
\end{equation}
for a constant $c>0$.  Hence, from the third term in $|F_a|^2$, we see $t^{-2}f^+$ is bounded as $t\to\infty$.  Thus, from the first term, $g^+$ must be bounded. We also quickly see that if $f^+,g^+$ are both bounded then from the formulae for $|F_a|^2$ and $|\dot{a}|^2$ we see that $|F_A|^2$ is bounded as well.
\end{proof}

\noindent We have a 1-parameter family of reducible invariant $G_2$-instantons on the BGGG $G_2$-manifold which have gauge group $U(1)\subseteq SU(2)$: they are given in Corollary \ref{cor:U1}(b) with $x_2=x_3=0$ and have bounded (in fact, decaying) curvature.  
We start with our non-existence result, which shows that a 2-parameter family of initial conditions leads to local $G_2$-instantons which either do not extend with bounded curvature or can only extend as one of the above abelian instantons.

\begin{theorem}\label{thm:BGGG.NoInst}
Let $A$ be a $SU(2)^2\times U(1)$-invariant $G_2$-instanton with gauge group $SU(2)$ defined in a neighbourhood of $\{0\}\times S^3$ on the BGGG $G_2$-manifold $\mathbb{R}^4\times S^3$  smoothly extending over $P_1$ as given by Proposition \ref{prop:LocalG2Instantons}.  

\noindent If $f_1^+\leq\frac{1}{2}$, or $g_1^+\geq 0$ with $g_1^+\geq f_1^+$, then $A$ extends globally to $\mathbb{R}^4\times S^3$ with bounded curvature if and only if $A$ has gauge group $U(1)$ and is given in Corollary \ref{cor:U1}(b) with $x_2=x_3=0$. 
\end{theorem}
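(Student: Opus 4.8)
The plan is to show that, under either hypothesis, a globally defined instanton with bounded curvature must have $g^+\equiv 0$, in which case \eqref{eq:InstantonODE1} collapses to the linear abelian equation $\dot f^++Pf^+=0$ and $A$ is exactly the connection of Corollary \ref{cor:U1}(b) with $x_2=x_3=0$ (so the ``if'' direction is already known). By Remark \ref{g+sign.rmk} I may assume $g_1^+\ge 0$; if $g_1^+=0$ then Lemma \ref{lem:g+sign} gives $g^+\equiv 0$ and we are in the stated abelian case, so assume $g_1^+>0$, hence $g^+>0$ for all $t$. Writing the coefficient functions in \eqref{eq:InstantonODE1}--\eqref{eq:InstantonODE2} as $P=\tfrac12\big(\tfrac{A_1}{B_2^2}-\tfrac{A_1}{A_2^2}\big)$ and $Q=\tfrac12\big(\tfrac{A_2^2+B_1^2+B_2^2}{A_2B_1B_2}-\tfrac{A_1^2+2A_2^2}{A_1A_2^2}\big)$, the goal (via Lemma \ref{lem:CurvatureBound}) is to prove that $g^+$ is unbounded.

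The engine of the proof is the monotone quantity $\Phi:=f^+/A_1$. Since \eqref{eq:dotA1} is precisely $\dot A_1+PA_1=0$, a direct computation using \eqref{eq:InstantonODE1} gives $\dot\Phi=-(g^+)^2/A_1<0$, so $\Phi$ decreases strictly from $\Phi(0^+)=2f_1^+$ to a limit $\Phi_\infty<2f_1^+$. The first key step I would establish is the criterion
\[
\Phi_\infty<1\ \Longrightarrow\ g^+\ \text{is unbounded}.
\]
For this I would write $\tfrac{d}{dt}\log g^+=-(Q+f^+)=-(Q+A_1)+A_1(1-\Phi)$ and use the BGGG asymptotics \eqref{eq:Bogo.asym}, which give $A_1\to 1$ and $Q+A_1=O(t^{-1})$; once $\Phi_\infty<1$ we have $1-\Phi\ge\delta>0$ for large $t$, so the linearly growing $\int A_1(1-\Phi)$ dominates the merely logarithmic $\int(Q+A_1)$ and $\log g^+\to+\infty$.

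Granting this criterion, hypothesis (i) is immediate: $f_1^+\le\tfrac12$ forces $\Phi(0^+)=2f_1^+\le 1$, and strict monotonicity gives $\Phi_\infty<1$. For hypothesis (ii) I would first prove the comparison $g^+\ge f^+$ for all $t$. Setting $W:=(g^+-f^+)/A_1$ and $G:=g^+/A_1>0$, one finds the scalar linear equation $\dot W=G\big((P-Q)+A_1W\big)$, whose source term is $G(P-Q)$; since $W(0^+)=2(g_1^+-f_1^+)\ge 0$, the region $\{W\ge 0\}$ is forward invariant as long as $P-Q\ge 0$. With $g^+\ge f^+$ in hand, $\Phi_\infty\ge 1$ becomes impossible: it would force $\Phi\ge 1$ throughout, hence $f^+\ge A_1$ and $g^+\ge f^+\ge A_1$, so $\dot\Phi=-(g^+)^2/A_1\le -A_1$ and $\Phi\to-\infty$, a contradiction. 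Thus $\Phi_\infty<1$ and $g^+$ is again unbounded. In both cases Lemma \ref{lem:CurvatureBound} shows the curvature is unbounded, contradicting global boundedness and completing the ``only if''.

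The step I expect to be the main obstacle is the inequality $P\ge Q$ required for the forward invariance of $\{g^+\ge f^+\}$ in case (ii). Near the singular orbit both $P$ and $Q$ diverge like $-1/t$ and their leading terms cancel, so the sign of $P-Q$ is controlled by the subleading behaviour of the metric functions; establishing $P-Q\ge 0$ for \emph{all} $t>0$ will require careful estimates from \eqref{eq:dotA1}--\eqref{eq:dotB2} together with the explicit BGGG solution, rather than the asymptotics alone (for which $P-Q\to 1>0$). This is exactly where the specific geometry of the BGGG metric enters. It is worth noting that the complementary parameter region $\{f_1^+>\tfrac12,\ 0\le g_1^+<f_1^+\}$, in which $f^+\to L\ge 1$ and $g^+\to 0$ can coexist, is precisely where the genuine $SU(2)$ instantons of Theorem \ref{thm:BGGG.Inst} are produced.
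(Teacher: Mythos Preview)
Your approach is essentially the same as the paper's. The paper changes variable to $s$ with $ds=A_1\,dt$ (so $s=r-\tfrac94$ in the explicit BGGG coordinate) and sets $F=\Phi=f^+/A_1$, $G=g^+/A_1$; your equations become $F'=-G^2$ and $G'=(H-F)G$ with
\[
H=\frac{P-Q}{A_1}=\frac{1}{2}\Big(\frac{2}{A_1^2}+\frac{1}{B_2^2}-\frac{A_2^2+B_1^2+B_2^2}{A_1A_2B_1B_2}\Big),
\]
and your quantity $W=G-F$ satisfies $(G-F)'=(H+G-F)G$, which is exactly your $\dot W=G\big((P-Q)+A_1W\big)$ divided by $A_1$. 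Your criterion $\Phi_\infty<1\Rightarrow g^+$ unbounded is the paper's statement that $H\to 1$, and your case (ii) contradiction (if $\Phi_\infty\ge 1$ then $g^+\ge f^+\ge A_1$ forces $\Phi\to-\infty$) is a cleaner variant of the paper's endgame.

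The obstacle you flag, $P\ge Q$, is precisely $H\ge 0$, and this is \emph{not} hard for BGGG: the paper simply plugs the explicit metric coefficients into $H$ and obtains the closed form
\[
H=1-\frac{5\big(r-\tfrac{9}{20}\big)^2-\tfrac{27}{10}}{r(r-3/4)(r+9/4)},
\]
from which one reads off that $H$ takes values in $(0,1)$, is increasing, and tends to $1$. So no delicate estimates from \eqref{eq:dotA1}--\eqref{eq:dotB2} are needed---the explicit BGGG solution already gives a one-line verification of your key inequality. With that in hand your argument is complete.
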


\begin{proof} If $g^+\equiv 0$ then we obtain an abelian instanton as in Corollary \ref{cor:U1}(b) with $x_2=x_3=0$.  Suppose, for a contradiction, that $g^+$ is not identically zero and that $A$ is defined for all $t$ has bounded curvature.  By Lemma \ref{lem:g+sign} and Remark \ref{g+sign.rmk} we may assume without loss of generality that $g_1^+>0$ and thus $g^+>0$ for all $t$.

\noindent Let $F=f^+/A_1$ and $G=g^+/A_1$ and let $s=r-\frac{9}{4} \in [0,\infty)$ be given as in \eqref{eq:r.BGGG}.  If we let $f'=\frac{df}{ds}$ then \eqref{eq:InstantonODE1}--\eqref{eq:InstantonODE2} are equivalent to
\begin{equation}\label{eq:NoInst.1}
F'=-G^2\quad\text{and}\quad G'=(H-F)G,
\end{equation} 
where
\begin{equation}\label{eq:H}
H=\frac{1}{2}\left(\frac{2}{A_1^2}+\frac{1}{B_2^2}-\frac{A_2^2+B_1^2+B_2^2}{A_1A_2B_1B_2}\right)=1-\frac{5(r-\frac{9}{20})^2-\frac{27}{10}}{r(r-3/4)(r+9/4)}.
\end{equation}
Notice that $H$ takes values in $(0,1)$, is increasing, and $\lim_{s\to\infty}H(s)=1$.

\noindent Suppose first that $f_1^+\leq \frac{1}{2}$.  Since $f^+=f_1^+t+O(t^3)$ and $A_1=t/2+O(t^3)$ by Example \ref{ex:BGGG}, we see that
$F(0)=2f_1^+\leq 1$.  Moreover, $F$ is strictly decreasing by \eqref{eq:NoInst.1} as $G>0$, so there is $\epsilon>0$ so that $F(s)\leq 1-\epsilon$ for all $s>0$.  As $H(s)\to 1$, there exists $s_0>0$ so that $H(s)-F(s)>\frac{\epsilon}{2}$ for all $s\geq s_0$.  
We deduce from \eqref{eq:NoInst.1} that $G'>\frac{\epsilon}{2}G$ for all 
$s\geq s_0$ as $G>0$, and hence $G\geq e^{\epsilon s/2}$.  Therefore, $g^+$ grows at least exponentially, so $F_A$ is unbounded by Lemma \ref{lem:CurvatureBound}, giving a contradiction.

\noindent Now suppose $g_1^+\geq f_1^+$.  Then $G(0)-F(0)\geq 0$ and 
 one sees that $G(s)-F(s)>0$ is increasing for small $s>0$ using 
 $g^+=g_1^+t+u_2(0)t^3+O(t^5)$, $f^+=f_1^+t+u_1(0)t^3+O(t^5)$ and the formulae \eqref{eq:u1}--\eqref{eq:u2}, where the values $C_1(0),C_2(0)$ for the BGGG metric are given in Example \ref{ex:BGGG}.  
 
\noindent We see from \eqref{eq:NoInst.1} that
\begin{equation}\label{eq:NoInst.2}
(G-F)'=(H+G-F)G.
\end{equation}
Therefore, when $G-F=0$ we must have $(G-F)'=HG>0$.  As $G-F$ is initially increasing, it therefore cannot have any zeros for $s>0$, which means that $G-F>0$ for all $s>0$.  We deduce that $-F>-G$ and hence, by \eqref{eq:NoInst.1}, $G'>(H-G)G$.

\noindent If $F(s)\leq 1$ for some $s$, then we are in the same situation as the previous case of $f_1^+\leq\frac{1}{2}$, which leads to a contradiction.  If instead $F(s)>1$ for all $s$ then $F$ is bounded below, so as $F$ is strictly decreasing we need from \eqref{eq:NoInst.1} that $\lim_{s\to\infty}G(s)=0$.  Hence, as $H(s)\to 1$, there exists $s_0$ so that $G'>\frac{1}{2}G$ for all $s\geq s_0$, which implies that $g^+$ grows at least exponentially.  This again gives a contradiction by Lemma \ref{lem:CurvatureBound}.
\end{proof}

\begin{remark}
The above proof of non-existence of irreducible instantons for $f_1^+\leq \frac{1}{2}$ immediately extends to the Bogoyavlenskaya metrics by the asymptotics in \eqref{eq:Bogo.asym}.  The proof for $g_1^+\geq 0$ and $g_1^+\geq f_1^+$ would also extend if we knew that $H$ given in \eqref{eq:H} continued to be positive for all $t>0$ for the Bogoyavlenskaya metrics.
\end{remark}

\noindent We now give our existence result, which provides a full 2-parameter family of irreducible $SU(2)^2\times U(1)$-invariant $G_2$-instantons with gauge group $SU(2)$ on the BGGG $G_2$-manifold.

\begin{theorem}\label{thm:BGGG.Inst}
Let $A$ be a $SU(2)^2\times U(1)$-invariant $G_2$-instanton with gauge group $SU(2)$ defined in a neighbourhood of $\{0\}\times S^3$ on the BGGG $G_2$-manifold $\mathbb{R}^4\times S^3$  smoothly extending over $P_1$ as given by Proposition \ref{prop:LocalG2Instantons}.\\  If $f_1^+\geq \frac{1}{2}+g_1^+>\frac{1}{2}$, then $A$ extends globally to $\mathbb{R}^4\times S^3$ with bounded curvature. 
\end{theorem}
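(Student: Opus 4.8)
The plan is to run the non-existence analysis of Theorem \ref{thm:BGGG.NoInst} ``in reverse'': instead of forcing $g^+$ to grow exponentially, I will exhibit an invariant region in which both rescaled fields stay bounded, and then use a continuation argument to obtain a globally defined solution whose curvature is bounded by Lemma \ref{lem:CurvatureBound}.

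First I would pass to the variables $F = f^+/A_1$, $G = g^+/A_1$ and $s = r-\tfrac94$ introduced in the proof of Theorem \ref{thm:BGGG.NoInst}, so that \eqref{eq:InstantonODE1}--\eqref{eq:InstantonODE2} become the system \eqref{eq:NoInst.1}, namely $F'=-G^2$ and $G'=(H-F)G$, with $H$ as in \eqref{eq:H} satisfying $0<H<1$ and $H\nearrow 1$. Since $f^+=f_1^+t+O(t^3)$, $g^+=g_1^+t+O(t^3)$ and $A_1=\tfrac t2+O(t^3)$ by Example \ref{ex:BGGG}, the initial data are $F(0)=2f_1^+$ and $G(0)=2g_1^+$. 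The hypothesis gives $g_1^+>0$, so $G(0)>0$, and Lemma \ref{lem:g+sign} keeps $G>0$ throughout the maximal interval of existence $[0,S)$, on which $f^+$ is by definition finite and hence does not blow up.

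The key step is to set $D:=F-G$ and compute from \eqref{eq:NoInst.1} that $D'=G(D-H)$, with $D(0)=2(f_1^+-g_1^+)\geq 1$ by the hypothesis $f_1^+\geq\tfrac12+g_1^+$. Because $H<1$, the level set $\{D=1\}$ acts as a barrier: at any candidate first crossing time $s_*\in(0,S)$ with $D(s_*)=1$ one has $D'(s_*)=G(s_*)\bigl(1-H(s_*)\bigr)>0$, which forbids a downward crossing, so $D\geq 1$ on all of $[0,S)$. I expect this to be the main obstacle, as it is precisely where the sharp hypotheses $g_1^+>0$ and $f_1^+-g_1^+\geq\tfrac12$ and the bound $H<1$ all enter; one must be careful that the barrier argument is compatible with the sign information coming from Lemma \ref{lem:g+sign}. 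Granting $D\geq 1$, I get $F=D+G>1$, while $F'=-G^2\le 0$ forces $F\le F(0)=2f_1^+$; hence $1<F\le 2f_1^+$ and $0<G=F-D\le 2f_1^+-1$, so both $F$ and $G$ are bounded on $[0,S)$.

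Finally, since $H$ is smooth on $r\in[\tfrac94,\infty)$ and $(F,G)$ remains in the compact region just described, the standard escape lemma rules out finite-$s$ blow-up, giving $S=\infty$; as $\tfrac{ds}{dt}=A_1\to 1$ this is equivalent to existence for all $t\in[0,\infty)$. Combined with the smooth extension over $\{0\}\times S^3$ from Proposition \ref{prop:LocalG2Instantons}, this shows $A$ is defined on all of $\mathbb{R}^4\times S^3$. Because $0\le A_1<1$ is bounded on $[0,\infty)$, the bounds on $F,G$ yield that $f^+=A_1F$ and $g^+=A_1G$ are bounded, whence $|F_A|$ is bounded by Lemma \ref{lem:CurvatureBound}, completing the argument.
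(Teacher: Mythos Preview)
Your argument is correct and follows the same overall route as the paper: the same change of variables to $(F,G)$ in the $s$-coordinate, the same crucial use of the bound $0<H<1$, and the same continuation plus Lemma~\ref{lem:CurvatureBound} to finish. The one genuine difference is the invariant you use to trap the solution. The paper computes
\[
\frac{d}{ds}\bigl((F-1)^2-G^2\bigr)=2(1-H)G^2>0,
\]
so $(F-1)^2-G^2\geq 0$ is preserved and yields $F>1$ and $0<G<F-1$. You instead set $D=F-G$ and use the barrier $D'=G(D-H)>0$ at $D=1$ to keep $D\geq 1$, which gives the same bounds $F>1$ and $0<G\leq F-1$. The two invariants are equivalent via the factorisation $(F-1)^2-G^2=(D-1)(F+G-1)$, since $F+G-1>0$ in the region under consideration; your barrier version is perhaps marginally more direct, while the paper's Lyapunov form has the bonus of giving the explicit lower bound $F_\infty\geq 1+\sqrt{(2f_1^+-1)^2-(2g_1^+)^2}$ used later in Corollary~\ref{cor:Holonomy}.
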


\begin{proof}
Recall the notation from the proof of Theorem \ref{thm:BGGG.NoInst}.  The conditions in the statement are equivalent to $F(0)-1\geq G(0)>0$.  Hence, by Lemma \ref{lem:g+sign}, we have that $G(s)>0$ for all $s$.  Now observe from \eqref{eq:NoInst.1} that since the function $H$ in \eqref{eq:H} takes values in $(0,1)$ we have
\begin{equation}\label{eq:Inst.1}
\frac{d}{ds}\big((F-1)^2-G^2\big)=2(1-H)G^2>0.
\end{equation}
As $(F-1)^2-G^2\geq 0$ at $s=0$, we have that $(F-1)^2-G^2>0$ for all $s>0$.  Thus $(F-1)^2>G^2>0$ and since $F(0)>1$ this means that $F(s)>1$ for all $s$.  

\noindent By \eqref{eq:NoInst.1}, $F$ is decreasing and thus $F$ is bounded as it is bounded below (by 1).  We also know that $0<G<F-1$ so $G$ is also bounded.  As $H$ is also bounded, we deduce that a long time solution to the ODEs \eqref{eq:NoInst.1} must exist.  

\noindent Since $F$ is bounded below by $1$, decreasing and exists for all $s$ we must have again from \eqref{eq:NoInst.1} that $G(s)\to 0$ as $s\to\infty$, and that $\lim_{s\to\infty}F(s)$ exists and equals some constant greater or equal to $1$.  Hence, both $f^+$ and $g^+$ are bounded  and so $A$ has bounded curvature by Lemma \ref{lem:CurvatureBound}.
\end{proof}

\begin{remark}\label{rem:Inst_Bogoyavlenskaya}
Via the asymptotics in \eqref{eq:Bogo.asym}, we see that the function $H$ in \eqref{eq:H} for any given Bogoyavlenskaya metric is always bounded above by some $C\geq 1$ (possibly depending on the metric, though one might hope to show that $C=1$).  Thus, the proof of Theorem \ref{thm:BGGG.Inst} extends to prove the existence of $G_2$-instantons with bounded curvature for $f_1^+\geq \frac{C}{2}+g_1^+>0$ in these cases.  
\end{remark}

\noindent  Given a $G_2$-instanton $A$ on the BGGG $\mathbb{R}^4\times S^3$ as in Theorem \ref{thm:BGGG.Inst} we can evaluate the holonomy of $A$ around the circle at infinity, which is a $U(1)$ transformation.  In particular, if we fix $g_1^+>0$, we obtain a map
\begin{equation}\label{eq:hol.infty}
Hol_{\infty} : ( \tfrac{1}{2} +  g_1^+ , + \infty ) \rightarrow U(1) \subset SU(2)
\end{equation}
which is the map that takes $f_1^+$ to this limit holonomy.  It is natural to ask about the image of this map, which we now show is all of $U(1)$.

\begin{corollary}\label{cor:Holonomy}
For any fixed $g_1^+ >0$, the map \eqref{eq:hol.infty} is surjective.
\end{corollary}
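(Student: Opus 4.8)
The plan is to express the holonomy along the asymptotic circle through the single limiting quantity $F_\infty := \lim_{s\to\infty} F(s)$, where $F = f^+/A_1$ as in the proof of Theorem \ref{thm:BGGG.Inst}, and then to show that $F_\infty$, viewed as a function of $f_1^+$, is continuous and unbounded. First I would recall that the finite-size circle at infinity is the orbit of the Killing field $V = T_1^+$ generating the $U(1)$-action, and that for a connection \eqref{eq:InvariantConnection.BGGG} with $f^-=g^-=0$ one has $A(V) = A_1 f^+ T_1$, since $\eta_i^+(T_1^+) = \delta_{i1}$ and all remaining terms annihilate $V$. Writing $A_1 f^+ = A_1^2 F$ and using $A_1 \to 1$ from \eqref{eq:Bogo.asym} together with $F \to F_\infty$, the restriction of $A$ to the circle at infinity is the constant $F_\infty T_1 \otimes \eta_1^+$. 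Hence $Hol_\infty(f_1^+) = \exp(-\tau F_\infty(f_1^+)\, T_1)$ for the fixed period $\tau>0$ of the orbit of $V$. As $\theta \mapsto \exp(-\tau\theta\, T_1)$ is the standard periodic surjection $\mathbb{R}\to U(1)$, surjectivity of $Hol_\infty$ reduces to showing that the range of $F_\infty$ is an interval longer than one period.

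Next I would extract the needed properties of $F_\infty$ from the analysis in Theorem \ref{thm:BGGG.Inst}. With $F(0)=2f_1^+$ and $G(0)=2g_1^+$, integrating \eqref{eq:Inst.1} and using $G(s)\to 0$ gives
\[ (F_\infty - 1)^2 = (F(0)-1)^2 - G(0)^2 + 2\int_0^\infty (1-H)G^2\,ds. \]
Discarding the nonnegative integral yields $(F_\infty-1)^2 \geq (2f_1^+-1)^2 - (2g_1^+)^2$, whose right-hand side tends to $+\infty$ as $f_1^+\to+\infty$ with $g_1^+$ fixed; hence $F_\infty\to+\infty$. The same inequality shows $F_\infty>1$ strictly for every admissible $f_1^+$, which I will use for the decay estimate below.

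The technical heart of the argument, and the step I expect to be the main obstacle, is the continuity of $f_1^+\mapsto F_\infty$, which amounts to interchanging the limit $s\to\infty$ with the dependence on the parameter. I would control the tail using $G'=(H-F)G$ from \eqref{eq:NoInst.1}: since $F$ decreases to $F_\infty>1$ and $H\to 1$ by \eqref{eq:H}, one has $H-F\to 1-F_\infty<0$, so $G$ decays exponentially with a rate bounded away from $0$ uniformly as $f_1^+$ ranges over a compact subinterval (on which $F_\infty\geq 1+\delta$ for some $\delta>0$). Combined with continuous dependence of the solutions of \eqref{eq:NoInst.1} on $f_1^+$, uniform on compact $s$-intervals, this uniform exponential decay lets me pass to the limit in the integral formula above and conclude that $F_\infty$ is continuous.

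Finally, $F_\infty$ is a continuous real-valued function on the connected interval $(\tfrac12+g_1^+,+\infty)$, so its image is an interval; it is bounded below by $1$ and, by the second step, unbounded above. This interval therefore has length exceeding one period of $\theta\mapsto\exp(-\tau\theta\,T_1)$, so composing with this periodic surjection shows that $Hol_\infty$ takes every value in $U(1)$, which proves the claim.
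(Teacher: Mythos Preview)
Your proof is correct and follows essentially the same approach as the paper: express the holonomy as $\exp(cF_\infty T_1)$, use the monotonicity identity \eqref{eq:Inst.1} to get the lower bound $(F_\infty-1)^2\geq(2f_1^+-1)^2-(2g_1^+)^2$ showing $F_\infty\to\infty$, and combine with continuity of $f_1^+\mapsto F_\infty$ to conclude surjectivity. Your treatment of continuity is in fact more careful than the paper's, which simply invokes continuous dependence on initial conditions; you rightly observe that one must control the tail as $s\to\infty$, and your uniform exponential decay argument for $G$ (using $H<1$ and $F\geq F_\infty\geq 1+\delta$ on compact parameter sets) is exactly what is needed to justify passing to the limit.
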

\begin{proof}
From \eqref{eq:Inst.1} we conclude that for all $s>0$
$$(F(s)-1)^2 > (F(s)-1)^2-G(s)^2 > (F(0)-1)^2-G(0)^2 = (2f_1^+ -1)^2 -(2 g_1^+)^2 .$$
Since $F(s)>1$ for all $s$ by the proof of Theorem \ref{thm:BGGG.Inst}, we deduce in fact that  
$$F(s) > 1 + \sqrt{(2f_1^+-1)^2 -(2 g_1^+)^2 }$$
for all $s>0$.  Moreover, as $F$ is decreasing by\eqref{eq:NoInst.1}, we have that
\begin{equation}\label{eq:F_infinity}
F_{\infty} (f_1^+) := \lim_{ t \rightarrow + \infty} F(t) \in \left[ 1 + \sqrt{(2f_1^+-1)^2 -(2 g_1^+)^2 } \ , \ 2f_1^+ \right] .
\end{equation}
Hence, for any fixed $g_1^+$, we can vary $f_1^+ > \frac{1}{2}+g_1^+$ to ensure that $F_{\infty}$ is as large as we want. By continuous dependence with respect to initial conditions for ODEs, we have that $F_{\infty}$ varies continuously with $f_1^+$, and so the image of the map $F_{\infty} : ( \frac{1}{2} + g_1^+ , + \infty ) \rightarrow \mathbb{R}$ contains at least the interval $ ( 1+2g_1^+ , + \infty )$.\\
Now let $\gamma_t$ be the circle parametrized by
$$\gamma_t(\theta) = (t , \exp_{(1,1)}(2 \pi \theta T_1^+) )\subseteq \mathbb{R}^+_t \times S^3 \times S^3 ,$$
for $\theta \in [0,1]$. Then, the holonomy of $A=a(t)$ around $\gamma_t$ is 
\begin{eqnarray}\nonumber
Hol(\gamma_t) & = & \exp\left(\int_{\gamma_t} a(t) \right) = \exp\left(\int_{\gamma_t} A_1(t)^2 F(t) T_1\otimes \eta_1^+ \right) \\ \nonumber 
& = & \exp( 2\pi A_1(t)^2 F(t) T_1).
\end{eqnarray}
Taking the limit as $t \rightarrow + \infty$ and recalling \eqref{eq:Bogo.asym}  gives $Hol_{\infty}= \exp(2 \pi F_{\infty}T_1)$.  The surjectivity of $F_{\infty}(f_1^+)$ onto $ ( 1+2g_1^+ , + \infty )$ proves the desired result.
\end{proof}

\begin{remark}\label{rem:Asymptotics_BGGG} 
The proofs of Theorem \ref{thm:BGGG.Inst} and Corollary \ref{cor:Holonomy} show that for the $G_2$-instantons $A$ constructed we have $F\to F_{\infty} \geq 1$ and $G\to 0$ at infinity.  Moreover, if $F_{\infty}>1$ (which occurs if $f_1^+>\frac{1}{2}+g_1^+$) then \eqref{eq:NoInst.1} implies that $G$ tends to 0 at an exponential rate.  Observe that the abelian $G_2$-instantons of \ref{cor:U1}(b) with $x_2=x_3=0$ are given by $F=x_1\in\mathbb{R}$ and $G=0$.  Hence, $A$ is asymptotic to an abelian $G_2$-instanton, with exponential rate of convergence if $F_{\infty}>1$. Moreover, using Lemma \ref{lem:Curvature} and \eqref{eq:Bogo.asym} we may compute the pointwise norm of the curvature $F_A$ of $A$ satisfies
$$\vert F_A \vert \sim 2\sqrt{\frac{A_1^4}{A_2^4}+\frac{A_1^4}{B_2^4} } =O(t^{-2}),$$ 
which proves they have quadratically decaying curvature.\\
By contrast, in Proposition \ref{prop:asym}, we showed that the irreducible $SU(2)^2\times U(1)$-invariant $G_2$-instantons for the BS metric are asymptotic to an irreducible connection and the rate of convergence is $O(t^{-3})$.
\end{remark}

\noindent In summary, on the BGGG $G_2$-manifold $\mathbb{R}^4\times S^3$, we have shown non-existence for irreducible $SU(2)^2\times U(1)$-invariant $G_2$-instantons with gauge group $SU(2)$ and bounded curvature for  $g_1^+>0$ and $f_1^+\leq\frac{1}{2}$ or $g_1^+\geq f_1^+$, and existence for $f_1^+\geq\frac{1}{2}+g_1^+>\frac{1}{2}$.  This currently leaves open the region where $0<f_1^+-\frac{1}{2}<g_1^+<f_1^+$.  Some numerical investigation indicates that some of these initial conditions may lead to globally defined instantons with bounded curvature and some may not.

\subsubsection[\texorpdfstring{Solutions smoothly extending on $P_{\id}$}{Solutions smoothly extending on Pid}]{Solutions smoothly extending on {\boldmath $P_{\id}$}}

We now turn our attention to the more difficult case of solutions to the $SU(2)^2\times U(1)$-invariant $G_2$-instanton equations on 
$\mathbb{R}^4\times S^3$ which smoothly extend on the bundle $P_{\id}$.  
Here the ODE system does not simplify, but we obtain a 1-parameter family of local solutions in a neighbourhood of the singular orbit.  Although the strategy of proof remains the same as in our earlier similar results, the analysis is more involved. In order to ease  computations, we use the Taylor expansion for a smooth $SU(2)^2 \times U(1)$-symmetric $G_2$-holonomy metric in a neighbourhood of a singular orbit $\lbrace 0 \rbrace \times S^3$ at $t=0$,  computed in \eqref{eq:Metric_Taylor1}--\eqref{eq:Metric_Taylor4}, which depends on constants $b,c$.

\begin{proposition}\label{prop:LocalG2Instantons2}
Let $X \subset \mathbb{R}^4\times S^3 $ contain the singular orbit $\{0\}\times S^3 $ of the $SU(2)^2 \times U(1)$ action and be equipped with an $SU(2)^2 \times U(1)$-invariant holonomy $G_2$-metric.  There is a $1$-parameter family of $SU(2)^2 \times U(1)$-invariant $G_2$-instantons $A$ with gauge group $SU(2)$ in a neighbourhood of the singular orbit in $X$ smoothly extending over $P_{\id}$.\\
Moreover, in the notation of Proposition \ref{prop:ODEsSU(2)xU(2)} and \eqref{eq:metric.expansion}, any such $G_2$-instanton $A$ can be written as in \eqref{eq:InvariantConnection.BGGG}  
with $f^{\pm}$, $g^{\pm}$ solving the ODEs \eqref{eq:dotf+}-\eqref{eq:dotg-}
 subject to
\begin{eqnarray} \nonumber
f^{+}(t) & = &  \frac{2}{t} +  \left( \frac{(b_0^-)^2}{4}-\frac{1}{4b^2} -4c \right) t \\ \nonumber
&  + & \!\! \left( \frac{35  \left( b^2(b_0^-)^2- \frac{16}{7} \right) b^2 (b_0^-)^2 + 112  (b^2 c +12) b^2 c +22}{480 b^4} \right) t^3 + O(t^5 ), \\ \nonumber
g^+(t) & = & \frac{2}{t} +  \left( \frac{(b_0^-)^2}{4} + 2c \right) t \\ \nonumber
&  + &  \!\! \left( \frac{35  \left( b^2(b_0^-)^2- \frac{16}{7} \right) b^2 (b_0^-)^2 + 112  (b^2 c +12) b^2 c +22}{480 b^4} \right) t^3 + O(t^5 ), \\ \nonumber
\nonumber
f^-(t) & = & b_0^- + \frac{b_0^-}{4b^2} (b^2 (b_0^-)^2 -2) t^2 + O(t^4), \\ \nonumber
g^{-}(t) & = &  b_0^- + \frac{b_0^-}{4b^2} (b^2 (b_0^-)^2 -2) t^2 + O(t^4), 
\end{eqnarray}
for $b_0^- \in \mathbb{R}$.
\end{proposition}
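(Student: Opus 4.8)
The plan is to follow the same strategy as in Proposition \ref{prop:LocalG2Instantons}, setting up a regular singular initial value problem and appealing to the existence and uniqueness theorem of \cite{Malgrange1974} (cf.\ Theorem 4.7 in \cite{Foscolo2015}), but now for the fully coupled system \eqref{eq:dotf+}--\eqref{eq:dotg-}, since in the $P_{\id}$ case there is no decoupling that sets $f^-=g^-=0$. First I would use Lemma \ref{lem:SmoothlyExtendBGGG} to impose the conditions for smooth extension over $P_{\id}$: these force $f^+,g^+$ to be odd with leading term $2/t$ and $f^-,g^-$ to be even with common constant term $b_0^-$. Accordingly I would write
$$f^+=\tfrac{2}{t}+t\,u_1(t),\quad g^+=\tfrac{2}{t}+t\,u_2(t),\quad f^-=v_1(t),\quad g^-=v_2(t),$$
with $u_1,u_2,v_1,v_2$ real analytic and even, and set $X=(u_1,u_2,v_1,v_2)$.

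The first key step is to substitute these expressions, together with the refined metric Taylor expansions \eqref{eq:Metric_Taylor1}--\eqref{eq:Metric_Taylor4} (which bring in the constants $b,c$), into \eqref{eq:dotf+}--\eqref{eq:dotg-} and to verify that the most singular contributions cancel. Because $f^+,g^+\sim 2/t$, the terms $(g^+)^2$, $f^+g^+$ and the coefficients $\tfrac12(A_1/B_2^2-A_1/A_2^2)$ etc.\ each produce $O(t^{-2})$ pieces; the point is that these combine with $\dot f^+,\dot g^+\sim -2/t^2$ so that the $O(t^{-2})$ singularities cancel identically. This cancellation is the analytic shadow of the fact that the canonical invariant connection $A^{\text{can}}$ of \eqref{eq:A.can} extends smoothly. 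Similarly, one checks that the $O(t^{-1})$ terms in \eqref{eq:dotf-}--\eqref{eq:dotg-} balance against the common $b_0^-$, confirming that $f^-(0)=g^-(0)$ is consistent. What then remains is a genuine regular singular system
$$\frac{dX}{dt}=\frac{M_{-1}(X)}{t}+M(t,X),$$
with $M$ real analytic in $t$ up to $t=0$.

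Next I would compute $M_{-1}$ and its linearization $dM_{-1}(X(0))$. The compatibility condition $M_{-1}(X(0))=0$ should pin down the initial data in terms of the single parameter $b_0^-$ together with the metric constants $b,c$, leaving $b_0^-\in\mathbb{R}$ as the only free parameter; this is what produces the claimed $t^1$ coefficients $\tfrac{(b_0^-)^2}{4}-\tfrac{1}{4b^2}-4c$ and $\tfrac{(b_0^-)^2}{4}+2c$ of $f^+$ and $g^+$. I would then verify that the eigenvalues of $dM_{-1}(X(0))$ are not positive integers, which I expect to be negative as in the $-2,-2,-6,-6$ pattern of the $P_1$ case, so that the theorem of \cite{Malgrange1974} applies and yields, for each $b_0^-$, a unique real analytic solution $X$. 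This gives the asserted $1$-parameter family. The higher Taylor coefficients in the statement (the $O(t^3)$ terms of $f^+,g^+$ and the $t^2$ terms of $f^-,g^-$) are then extracted by feeding the solution back into \eqref{eq:dotf+}--\eqref{eq:dotg-} and solving the resulting linear recursion order by order.

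The main obstacle will be the bookkeeping in the first step: because all four functions remain coupled and the leading terms are singular, one must carefully track the cancellation of the $O(t^{-2})$ and $O(t^{-1})$ contributions and correctly isolate $M_{-1}$ before the eigenvalue condition can even be stated. The eigenvalue computation itself is the other delicate point, since a positive integer resonance would obstruct the argument; confirming its absence, and extracting the explicit coefficients displayed in the proposition, is where essentially all the genuine work, as opposed to routine computation, lies.
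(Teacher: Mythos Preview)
Your overall strategy---reduce to a regular singular initial value problem and invoke Malgrange---is exactly that of the paper.  However, the particular ansatz you propose does not work, and the obstruction is precisely the eigenvalue check you flag as ``the other delicate point''.

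With your choice
\[
f^+=\tfrac{2}{t}+t\,u_1,\quad g^+=\tfrac{2}{t}+t\,u_2,\quad f^-=v_1,\quad g^-=v_2,
\]
one does indeed obtain a system $\dot X=\tfrac{1}{t}M_{-1}(X)+M(t,X)$, and the condition $M_{-1}(X(0))=0$ forces $v_1(0)=v_2(0)=b_0^-$ and the displayed $O(t)$ coefficients of $f^+,g^+$.  But a direct computation gives
\[
dM_{-1}(X(0))=\begin{pmatrix}0&-4&0&2b_0^-\\-2&-2&b_0^-&b_0^-\\0&0&-4&4\\0&0&2&-2\end{pmatrix},
\]
which is block upper-triangular with eigenvalues $\{-4,\,2\}$ and $\{-6,\,0\}$ on the diagonal blocks.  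The eigenvalue $+2$ is a positive integer, so Malgrange's theorem does \emph{not} apply; your expectation of a ``$-2,-2,-6,-6$''-type pattern is incorrect here.  The resonance at $2$ means that at the next even order you would either meet an obstruction or pick up an extra free parameter, and in the latter case you would still have to impose by hand the remaining smoothness constraint from Lemma~\ref{lem:SmoothlyExtendBGGG} (that the $O(t)$ coefficients of $f^+$ and $g^+$ differ by exactly $4(C_2(0)-C_1(0))$), which your ansatz does not encode.

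The paper avoids this by building in the full content of Lemma~\ref{lem:SmoothlyExtendBGGG} from the outset: one writes
\[
f^+=\tfrac{2}{t}+(b_2^+-4C_1(0))t+t^3u_1,\qquad g^+=\tfrac{2}{t}+(b_2^+-4C_2(0))t+t^3u_2,
\]
\[
f^-=b_0^-+t^2v_1,\qquad g^-=b_0^-+t^2v_2,
\]
treating $b_2^+$ and $b_0^-$ as parameters.  Substitution then produces a more singular system
\[
\dot X=\frac{M_{-3}(b_0^-,b_2^+)}{t^3}+\frac{M_{-1}(X)}{t}+f(t,X),
\]
and killing $M_{-3}$ forces $4b_2^+=(b_0^-)^2-b^{-2}$, which determines $b_2^+$ in terms of $b_0^-$.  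After this, the linearisation of the residual $M_{-1}$ has eigenvalues $-8,-6,-3,-2$, all negative, and Malgrange applies directly to give the unique solution for each $b_0^-$.  In short: the extra $t^{-3}$ layer and the use of the shared parameter $b_2^+$ are not cosmetic---they are exactly what shifts the spectrum away from the resonance your ansatz hits.
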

\begin{proof}
On $P_{\id}$, the singular initial value problem to be solved has 
\begin{gather*}
f^-(t)=b_0^- + t^2 v_1(t), \quad g^{-}(t)=b_0^- + t^2 v_2(t),\\
f^{+}= \frac{2}{t} + (b_2^+-4C_1(0) ) t + t^3 u_1(t), \quad g^+= \frac{2}{t} + (b_2^+-4C_2(0)) t + t^3 u_2(t),
\end{gather*}
for some real analytic $v_1(t),v_2(t),u_1(t),u_2(t)$ by Lemma \ref{lem:SmoothlyExtendBGGG}. Moreover, notice that from \eqref{eq:Metric_Taylor1}--\eqref{eq:Metric_Taylor4} we have $C_1(0)=c$ and $C_2(0)=-\frac{1+8 cb^2}{16 b^2}$ and in the following we will write the coefficients of the metric in terms of $b,c \in \mathbb{R}$. The ODEs in Proposition \ref{prop:ODEsSU(2)xU(2)} then turn into the following ones for $X(t)=(u_1(t),u_2(t),v_1(t),v_2(t))$:
$$\frac{dX}{dt} = \frac{M_{-3}(b_0^- , b_2^+)}{t^3}+ \frac{M_{-1}(X(t))}{t} + f(t,X(t)),$$
where $f(t,X(t))$ is real analytic in both entries and
$$M_{-3}(b_0^- , b_2^+) = \left( (b_0^-)^2-4b_2^+- \frac{1}{b^2}, (b_0^-)^2-4b_2^+- \frac{1}{b^2}, 0 , 0 \right) .$$
For this to have a real analytic solution $X(t)$ we must have $M_{-3}=0$ which requires that $4b_2^+=(b_0^-)^2-\frac{1}{b^2}$. In that case we have
\begin{align*}
 M_{-1}(X(0)) & =   \!\Big(\! -6u_1(0)+ 2b_0^- v_2(0) , -3 u_1(0)-3u_2(0)+ b_0^- v_1(0) + b_0^- v_2(0), \\ \qquad  & -6v_1(0)+4v_2(0),
 2v_1(0)-4v_2(0) \Big) + K(b_0^-),
\end{align*}
where $K(b_0^-)\in \mathbb{R}^4$ is a constant only depending on $b_0^-$ and the metric. For a real analytic solution to exist we need $M_{-1}(X(0))=0$. As this is a linear equation and $dM_{-1}(X(0))$ is always an isomorphism, it can be uniquely solved for any $K(b_0^-)$. The unique solution of $M_{-1}(X(0))=0$ can be written as
\begin{equation}\label{eq:Pid.BGGG.inits}
\begin{gathered}
u_1(0)=u_2(0)= \frac{35  \left( b^2(b_0^-)^2- \frac{16}{7} \right) b^2 (b_0^-)^2 + 112  (b^2 c +12) b^2 c +22}{480 b^4},\\
v_1(0) = v_2(0) = \frac{b_0^-}{4b^2} (b^2 (b_0^-)^2 -2) .
\end{gathered}
\end{equation}
We now use the existence and uniqueness theorem for initial value problems of  
\cite{Malgrange1974}. This guarantees that for each $b_0^- \in \mathbb{R}$ there is a unique solution to the system 
$$\frac{dX}{dt} = \frac{M_{-1}(X(t))}{t} + f(t,X(t)),$$
provided that $M_{-1}(X(0))=0$ and $d M_{-1}(X)$ has no eigenvalues in the positive integers. We showed above that we can always find a unique $X(0)$ such that $M_{-1}(X(0))=0$. Moreover, the eigenvalues of $dM_{-1}$ can be computed to be $-8,-6,-3,-2$. 
Hence, for each $b_0^-\in \mathbb{R}$ there is indeed a unique solution $X(t)$ to the system above. This yields a unique $G_2$-instanton as in the statement determined by $b_0^-$.
\end{proof}

\begin{remark}
Since the BS, BGGG and Bogoyavlenskaya 
$G_2$-metrics all have $SU(2)^2 \times U(1)$-symmetry, Proposition \ref{prop:LocalG2Instantons2} yields $G_2$-instantons in these cases. In particular, in the BS case we have $c=-\frac{1}{24 b^2}$, $b=\frac{1}{\sqrt{3}}$ and these $G_2$-instantons coincide with those given in Proposition \ref{prop:LocalExistenceBS_1}.  
\end{remark}

\noindent In light of the existence result in Theorem \ref{thm:Alim} and the local existence result in Proposition \ref{prop:LocalG2Instantons2}, it is certainly an interesting non-trivial question which members of the 1-parameter family of local $G_2$-instantons from Proposition \ref{prop:LocalG2Instantons2} extend on $P_{\id}$ on the BS, BGGG or Bogoyavlenskaya $\mathbb{R}^4\times S^3$.\\
Another natural problem for further study is to understand the limits of the family of instantons constructed in Theorem \ref{thm:BGGG.Inst}, and their possible relationship to any extensions of the local instantons given in 
Proposition \ref{prop:LocalG2Instantons2}.  We saw in Proposition \ref{prop:asym} that global $G_2$-instantons on the BS $\mathbb{R}^4\times S^3$ have a limit at infinity given by a canonical connection 
on the link $S^3\times S^3$ of the asymptotic cone. For the instantons constructed in Theorem \ref{thm:BGGG.Inst} we know, by Remark \ref{rem:Asymptotics_BGGG}, that these are asymptotic to the abelian $G_2$-instantons with 
a rate depending on the asymptotic connection.  It is also certainly an interesting problem to investigate the behaviour of the family of instantons from Theorem \ref{thm:BGGG.Inst} when one or both of $f_1^+$ and $g_1^+$ go to 
infinity.   We would expect bubbling phenomena as in the BS case in Theorem \ref{thm:Compactness}, with possible relationship to the ASD instantons on Taub--NUT found in \cite{EtesiHausel}.  
The lack of an explicit formula for our instantons makes the bubbling analysis more difficult.\\
One other interesting problem is to investigate the behaviour of   $G_2$-instantons as the underlying metric is deformed. For instance, Remark \ref{rem:Inst_Bogoyavlenskaya} shows how to adapt the proof of existence in 
Theorem \ref{thm:BGGG.Inst} to the Bogoyavlenskaya $G_2$-manifolds, and we would want to analyse these instantons as the size of the circle at infinity gets very large or small. 
When it gets very large we expect them to resemble $G_2$-instantons for the BS metric given in Theorem \ref{thm:BS.cor}.  When it gets very small, there may be a relation with Calabi--Yau monopoles on the deformed conifold 
(as in \cite{Oli16}).

\appendix

\section{\texorpdfstring{\boldmath $SU(2)^2$}{SU(2)xSU(2)}-invariant tensors}\label{app:R4xS3}

In this appendix, we use Eschenburg--Wang's technique \cite{Eschenburg2000} to determine when a metric or connection extends smoothly over a singular orbit $Q=SU(2)\times SU(2)/ \Delta SU(2)\cong S^3$ in $X=\mathbb{R}^4 \times S^3$. The relevant group diagram 
is $I(SU(2)\times SU(2); \lbrace 1 \rbrace ; SU(2))$ and so the principal orbits are topologically $S^3 \times S^3$.  We will often identify $SU(2)$ with the unit quaternions.
\\
The normal bundle $NQ$ to $Q$ 
is $\mathbb{R}^4 \times S^3$ and is homogeneously constructed by $NQ= (SU(2) \times SU(2) )\times_{ SU(2)} \mathbb{H}$, where $SU(2)$ acts on $SU(2)\times SU(2)$ diagonally and on $\mathbb{H}$ by left multiplication. 
Similarly, $TQ=(SU(2)\times SU(2) ) \times_{SU(2)} \im (\mathbb{H})$, where $q\in SU(2)$ acts on $x \in \im(\mathbb{H})$ by $q \cdot x = qx\overline{q}$. We also note that 
$$T(NQ) \cong NQ\oplus\pi^* TQ,$$
where $\pi: NQ \rightarrow Q$ is the projection.

\subsection{Metrics}
 
By the previous discussion, $T(NQ)$ is modelled on $W=\mathbb{H}\oplus\im (\mathbb{H})$, with $a\in SU(2)$ acting by $a \cdot (p,q)= (ap,a q \overline{a})$, for $(p,q) \in W$. Following \cite{Eschenburg2000}, to determine which metrics extend smoothly over 
$Q$ we seek a basis of $S^2(W)$ corresponding to the evaluation at $1 \in \mathbb{H}$ of homogeneous $SU(2)$-equivariant polynomials $\mathbb{H} \rightarrow S^2(W)$ of minimal degree.\\
The equivariance condition implies that any such polynomial is of the form $x \mapsto \phi(x) \in S^2(W) \subset W \otimes W \cong \End(W)$, such that for $(p,q) \in W = \mathbb{H}\oplus \im(\mathbb{H})$  and $x\in SU(2)$ we have
\begin{align}
\phi(x) (p,q) &=\big(\phi_1(x)(p,q),\phi_2(x)(p,q)\big)\nonumber\\
&=\big(x \phi_1 (1) ( \overline{x} p ,\overline{x} q x ),  x \big(\phi_2(1) ( \overline{x} p,\overline{x} q x ) \big)\overline{x}  \big).\label{phi.eq}
\end{align}

\begin{itemize}
\item First we look at maps 
$x \mapsto \psi(x)(\cdot) \in \End(\im \mathbb{H})$  such that $ \psi(x)(q)=  x ( \psi (1) ( \overline{x} q x ) ) \overline{x}$ for $x\in SU(2)$.  The identity map is constant and so homogeneous of degree $0$. We also have the homogeneous degree $4$ polynomials $\psi(x)(q)=-xl\overline{x}qxl\overline{x}$ for $l\in\{i,j,k\}$. 
Given the coordinates $q=q_1i+q_2j+q_3 k \in \im( \mathbb{H})$ we have a canonical identification $\End(\im(\mathbb{H}) \cong \im(\mathbb{H})^* \otimes \im(\mathbb{H})^*$.  Using this identification, we have that the identity and degree 4 polynomials given, when  
evaluated at $x=1$, correspond to 
\begin{equation*}
\begin{aligned}
dq_1 \otimes dq_1 + dq_2 \otimes dq_2 + dq_3 \otimes dq_3,&\\
dq_1 \otimes dq_1 - dq_2 \otimes dq_2 - dq_3 \otimes dq_3, & \\ -dq_1 \otimes dq_1 + dq_2 \otimes dq_2 - dq_3 \otimes dq_3, &\\ -dq_1 \otimes dq_1 - dq_2 \otimes dq_2 + dq_3 \otimes dq_3.&
\end{aligned} 
\end{equation*}
\item Now we consider 
maps $x \mapsto \psi(x)(\cdot) \in \End( \mathbb{H})$ such that $ \psi(x)(p)=  x \psi (1)( \overline{x} p )$ for $x\in SU(2)$. Fixing coordinates $p=p_0 + i p_1 + j p_2 + kp_3 \in \mathbb{H}$, we may identify $\End(\mathbb{H})$ with $\mathbb{H}^* \otimes \mathbb{H}^*$. 
Certainly, the constant maps given by the identity and the complex structures are $\SU(2)$-equivariant. The constant map corresponds to 
\begin{equation*}
dp_0 \otimes dp_0 + dp_1 \otimes dp_1 + dp_2 \otimes dp_2 + dp_3 \otimes dp_3,
\end{equation*} while the complex structures correspond to antisymmetric (anti-self-dual) $2$-tensors. 
We also have homogeneous degree 2 polynomials, where $\psi(x)(p)=\langle p,xl\rangle xl$ for $l\in\{i,j,k\}$. These are 
 $SU(2)$-equivariant and correspond under evaluation at $x=1$ to
\begin{equation*}
 dp_1 \otimes dp_1, \ dp_2 \otimes dp_2, \ dp_3 \otimes dp_3.
 \end{equation*}
\item Finally, it suffices to consider maps $x\mapsto \phi(x)$ as in \eqref{phi.eq} with $\phi_1(x)(p,q)=\phi_1(x)(q)$ and $\phi_2(x)(p,q)=\phi_2(x)(p)$.  We have the $SU(2)$-equivariant linear polynomial $\phi(x)(p,q)=(qx,\frac{1}{2}(p\bar{x}-x\bar{p}))$, which in the coordinates as above corresponds at $x=1$ to
$$\sum_{i=1}^3 dq_i\otimes dp_i+dp_i\otimes dq_i.$$
The equivariant homogeneous degree 3 polynomials $$\phi(x)(p,q)=(\langle q,xl_1\overline{x}\rangle xl_2,\langle p,xl_2\rangle xl_1\overline{x}),$$ for $l_1,l_2\in\{i,j,k\}$, then correspond under evaluation at $x=1$ to $dp_i\otimes dq_j+dq_i\otimes dp_j$ for $i,j\in\{1,2,3\}$.
\end{itemize}

\begin{remark}
As an alternative to the degree 4 polynomials we wrote down in the first bullet above, we could have used 
$\psi(x)(q)=\langle q, xl_1\overline{x} \rangle xl_2\overline{x}$, where $l_1,l_2 \in \lbrace i,j,k \rbrace$.
\end{remark}

\noindent We now have enough information to analyze metrics of the form
\begin{equation}\label{eq:MetricToExtend}
g= dt^2 + \sum_{i=1}^3 (2A_i(t))^2 \eta_{i}^+ \otimes \eta_i^+ + (2B_i(t))^2 \eta_i^- \otimes \eta_i^-,
\end{equation}
where $\eta_i^{\pm}$ define bases for the diagonal and anti-diagonal copies of $\mathfrak{su}(2)$ in $\mathfrak{su}(2)\oplus\mathfrak{su}(2)$ as in $\S$\ref{sec:G2metrics}.  
We embed $\mathbb{R}^4 \times S^3 \hookrightarrow \mathbb{H} \times \mathbb{H}$ and let $SU(2) \times SU(2)$ act via $(a_1,a_2) \cdot (p,q) = (a_1p,a_1 q \overline{a}_2)$. Using this action and the coordinates $p=p_0+ip_1+jp_2+kp_3$ and $q=q_0+iq_1+jq_2+kq_3$, we compute that, at $(t,1)\in\mathbb{R}^4\times S^3$ for $t\in\mathbb{R}$, the dual frames $\{T_i^{\pm}\}$ to the coframes $\{\eta_i^{\pm}\}$ satisfy
$$T_i^+ = t \frac{\partial}{\partial p_i}, \quad T_i^-=t \frac{\partial }{\partial p_i} + 2 \frac{\partial}{\partial q_i}, \quad \frac{\partial }{\partial t}  = \frac{\partial}{\partial p_0},$$
for $i=1,2,3$. At $t=0$ the isotropy is $\Delta SU(2)$ 
and the orbit $Q$ is an $S^3$ whose tangent space at $(0,1)$ is $0\oplus\im (\mathbb{H})$. 
For $t\neq 0$ we have 
\begin{eqnarray}\label{eq:app.coframe}
\eta_i^+=\frac{1}{t}dp_i - \frac{1}{2}dq_i , & \eta_i^- = \frac{1}{2}dq_i, & dt = dp_0.
\end{eqnarray}
 It is now easy 
 to rewrite the metric in \eqref{eq:MetricToExtend} in terms of the equivariant symmetric $2$-tensors we found above. This gives
\begin{align*}
&g  =  dp_0^2 + \sum_{i=1}^3 \left( \frac{2A_i}{t} \right)^2 dp_i \otimes dp_i - \sum_{i=1}^3 \frac{2A_i^2}{t} ( dp_i \otimes dq_i + dq_i \otimes dp_i ) \\
&\qquad   + \sum_{i=1}^3 ( A_i^2 + B_i^2 ) dq_i \otimes dq_i \displaybreak[0]\\ \nonumber
& =  \sum_{i=1}^4 dp_i^2 + C \sum_{i=1}^3 dq_i^2  + \sum_{i=1}^3 \!\bigg( \!\left( \frac{2A_i}{t} \right)^2 -1 \bigg) dp_i^2 + \sum_{i=1}^3 \!\left( A_i^2 + B_i^2- C \right) dq_i^2\\
&  + D\sum_{i=1}^3   ( dp_i \otimes dq_i + dq_i \otimes dp_i )
-\sum_{i=1}^3\left(\frac{2A_i^2}{t}+D\right)( dp_i \otimes dq_i + dq_i \otimes dp_i )
\end{align*}
where $C$ is some smooth even function of $t$ and $D$ is some smooth odd function of $t$. 
 Eschenburg--Wang's technique guarantees that $g$ 
  smoothly extends over $Q$ if and only if, for $i=1,2,3$, $\left( 2A_i/t \right)^2 -1$ is even and $O(t^2)$, $B_i^2+A_i^2-C$ is even and $O(t^4)$, and $\frac{2A_i^2}{t}+D$ is odd and $O(t^3)$. In other words, $A_i(t) = t/2 + O(t^3)$ and $B_i^2(t)=C(t) - t^2/4 + O(t^4)$; in particular notice that up to order $O(t^2)$ the $A_i$ and $B_i$ do not depend on $i=1,2,3$. Moreover, for $g$ to extend to a metric we also require it to be positive definite. This implies that $A_i$, $B_i$ are sign definite for $t>0$ and $A_i(0)=0$, while $B_i(0) \neq 0$. 
 We summarise these conclusions. 

\begin{lemma}\label{lem:ExtendingSmoothlyMetric}
The metric $g$ 
in \eqref{eq:MetricToExtend} extends smoothly (as a metric) over the singular orbit $Q=SU(2)^2/ \Delta SU(2)$ if and only if   
$A_i$, $B_i$ are sign definite for $t>0$ and:
\begin{itemize}
\item  
the $A_i$'s are odd with $\dot{A}_i(0)=1/2$;
\item 
the $B_i$'s are even with $B_1(0)=B_2(0)=B_3(0) \neq 0$ and $\ddot{B}_1(0)=\ddot{B}_2(0)=\ddot{B}_3(0)$.
\end{itemize}
\end{lemma}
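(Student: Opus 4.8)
The plan is to read off the lemma directly from the Eschenburg--Wang extension criterion \cite{Eschenburg2000}, applied to the invariant symmetric $2$-tensor \eqref{eq:MetricToExtend} using the basis of $SU(2)$-equivariant symmetric $2$-tensors on $W=\mathbb{H}\oplus\im(\mathbb{H})$ catalogued above. The criterion states that an invariant tensor on the complement of $Q$ extends smoothly over $Q$ if and only if, when decomposed against the minimal-degree equivariant polynomial realising each tensorial component, every coefficient function has the parity of that polynomial and vanishes to at least the order of its degree. So the whole proof reduces to (i) expressing $g$ in flat coordinates, (ii) matching each piece to the relevant equivariant tensor, and (iii) unwinding the resulting parity and vanishing-order conditions.

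First I would substitute the coframe formulae \eqref{eq:app.coframe} for $\eta_i^{\pm}$ and $dt$ in the flat coordinates $p_i,q_i$ on $\mathbb{H}\times\mathbb{H}$ into \eqref{eq:MetricToExtend}, and group terms so that $g$ splits as a manifestly smooth background built from the degree-$0$ invariants (the flat tensor $\sum_{i=0}^3 dp_i^{\otimes 2}$, together with $C\sum_i dq_i^{\otimes 2}$ for an even function $C$, and the degree-$1$ mixed invariant with odd coefficient $D$) plus three families of corrections: $\big((2A_i/t)^2-1\big)\,dp_i^{\otimes 2}$, $\big(A_i^2+B_i^2-C\big)\,dq_i^{\otimes 2}$, and $\big(2A_i^2/t+D\big)\,(dp_i\otimes dq_i+dq_i\otimes dp_i)$ for $i=1,2,3$. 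The individual tensors carrying these corrections appear, respectively, in minimal degrees $2$, $4$, and $3$ in the catalogue, so the criterion forces $(2A_i/t)^2-1$ to be even and $O(t^2)$, $A_i^2+B_i^2-C$ to be even and $O(t^4)$, and $2A_i^2/t+D$ to be odd and $O(t^3)$.

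Unwinding these is then bookkeeping. The first condition gives $A_i(t)=t/2+O(t^3)$, i.e. $A_i$ odd with $\dot A_i(0)=1/2$, which also makes the mixed condition $2A_i^2/t+D$ automatically odd and $O(t)$ so it only remains to note consistency. The second gives $B_i^2=C(t)-t^2/4+O(t^4)$; since $C$ is a single $i$-independent even function, the constant term $B_i(0)^2=C(0)$ and the $t^2$-coefficient of $B_i^2$ are both independent of $i$, whence (using $B_i(0)\neq 0$) the $B_i$ are even with $B_1(0)=B_2(0)=B_3(0)\neq 0$ and $\ddot B_1(0)=\ddot B_2(0)=\ddot B_3(0)$. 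Finally, to upgrade the smoothly extending symmetric $2$-tensor to a genuine metric I would impose positive-definiteness, which yields sign-definiteness of $A_i,B_i$ for $t>0$ together with $A_i(0)=0$ and $B_i(0)\neq 0$. As the Eschenburg--Wang criterion is an equivalence, both directions of the stated ``if and only if'' follow simultaneously.

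The main obstacle is justifying the \emph{minimality} of the degrees $2$, $4$, $3$ rather than just exhibiting equivariant polynomials realising the needed components: it is precisely these degrees that dictate the vanishing orders $O(t^2)$, $O(t^4)$, $O(t^3)$, and hence the sharp Taylor conditions in the lemma. This requires checking that no lower-degree $SU(2)$-equivariant polynomial $\mathbb{H}\to S^2(W)$ produces the diagonal $dq_i^{\otimes 2}$ or the individual mixed tensors --- i.e. a genuine minimality argument for the equivariant polynomial basis assembled above --- rather than merely displaying representatives in those degrees.
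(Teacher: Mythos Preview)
Your proposal is correct and follows essentially the same argument as the paper: both rewrite $g$ in the flat $(p,q)$-coordinates via \eqref{eq:app.coframe}, split off the isotropic background $\sum dp_i^2+C\sum dq_i^2+D\sum(dp_i\otimes dq_i+dq_i\otimes dp_i)$, and then apply the Eschenburg--Wang criterion to the three correction families using the degree-$2$, degree-$4$, and degree-$3$ equivariant polynomials catalogued just before the lemma. Your final caveat about minimality is apt; the paper likewise only exhibits representatives at those degrees and implicitly relies on the minimality claim stated at the outset of the appendix.
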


\begin{remark}
In fact, for our applications 
there is no restriction in having the metrics above being real analytic instead of smooth. As $G_2$ manifolds are Ricci-flat, the metric is 
real analytic in harmonic coordinates. The function $t$ can be interpreted as the arclength parameter along a geodesic intersecting the principal orbits orthogonally, so it is a real analytic function of the harmonic coordinates, and thus the metric coefficients must be real analytic functions of $t$.
\end{remark}

\noindent Using Lemma \ref{lem:ExtendingSmoothlyMetric} and equations \eqref{eq:dotA1}--\eqref{eq:dotB2} we can compute the first order terms in the Taylor expansion for a metric with holonomy $G_2$ in a neighbourhood of a singular orbit $Q$ at $t=0$ to be
\begin{align}\label{eq:Metric_Taylor1}
A_1(t) & =  \frac{t}{2} + c t^3 + {\frac {  96(22c{b}^{2}+1) cb^2+11}{640 b^{4}}} t^5 + \ldots \\ \label{eq:Metric_Taylor2}
A_2(t) & =  \frac{t}{2} - \frac{1+ 8 c b^2}{16 b^2} t^3 - {\frac { 11- 24( 32c{b}^{2}+1) cb^2}{640 b^{4}}} t^5 + \ldots \displaybreak[0]\\  \label{eq:Metric_Taylor3}
B_1(t) & =  b + \frac{1}{4 b} t^2 - \frac{7+8cb^2}{160 b^3} t^4 + \ldots \\ \label{eq:Metric_Taylor4}
B_2(t) & =  b + \frac{1}{4 b} t^2 - \frac{13-8cb^2}{320 b^3} t^4 + \ldots
\end{align}

\noindent We now confirm that the BS and BGGG 
metrics from $\S$\ref{sec:G2metrics} satisfy the conditions of Lemma \ref{lem:ExtendingSmoothlyMetric}.  We use these formulae on a number of occasions. 

\begin{example}\label{ex:BS}
The BS metric on $\mathbb{R}^4\times S^3$ from $\S$\ref{sss:BS} has $A_1=A_2=A_3$ and $B_1=B_2=B_3$ in \eqref{eq:MetricToExtend}, with expansions
\begin{equation}\nonumber
A_1(t)  =  \frac{t}{2}-\frac{1}{8} t^3 +O(t^5), \quad 
B_1(t)  =  \frac{1}{\sqrt{3}} + \frac{\sqrt{3}}{4} t^2 - \frac{\sqrt{3}}{8} t^4 +O(t^6).
\end{equation}
\end{example}

\begin{example}\label{ex:BGGG}
The  BGGG 
metric on $\mathbb{R}^4\times S^3$ from $\S$\ref{sss:BGGG} has $A_2=A_3$ and $B_2=B_3$ in \eqref{eq:MetricToExtend}, with expansions
\begin{eqnarray}\nonumber
A_1(t) = \frac{t}{2}-\frac{7}{108} t^3 +O(t^5), &  &  \ \ A_2(t) = \frac{t}{2}+\frac{1}{216} t^3 +O(t^5), \\ \nonumber
B_1(t) = \frac{3}{2} + \frac{1}{6} t^2 - \frac{7}{648} t^4 + O(t^6), &  &  \ \ B_2(t) = \frac{3}{2} + \frac{1}{6} t^2 - \frac{17}{1296} t^4 + O(t^6) .
\end{eqnarray}
\end{example}

\subsection{Lie algebra-valued \texorpdfstring{$1$}{1}-forms}

Let $G$ be a compact Lie group with Lie algebra $\mathfrak{g}$.  
We now analyze the conditions to extend $\mathfrak{g}$-valued $1$-forms of the form 
\begin{equation}\label{eq:b2}
b  =  
\sum_{i=1}^3 b_i^+ \otimes \eta_i^+ +  \sum_{i=1}^3 b_i^- \otimes \eta_i^-
\end{equation}
over the singular orbit $Q$.  This a priori depends on how the (trivial) bundle $P= (SU(2) \times SU(2) ) \times G$ extends over $Q$. 
Such extensions are parametrized by (conjugacy classes) of isotropy homomorphisms $ \mu: SU(2) \rightarrow G$. Given $\mu$, we pull $P_{\mu}=SU(2)^2 \times_{(SU(2),\mu)} \mathfrak{g}$ back to $\mathbb{R}^4 \times S^3$, which determines the extension.\\
 Then $SU(2)$ acts on $\mathfrak{g}$ via $\Ad \circ \mu$ and we need a 
 basis for $\Hom(W , \mathfrak{g})$ given by evaluation at $1$ of homogeneous $SU(2)$-equivariant polynomials $\mathbb{H} \rightarrow \Hom(W , \mathfrak{g})$, where $W = \mathbb{H}\oplus\im(\mathbb{H})$. Following \cite{Eschenburg2000}, we seek homogeneous polynomials $x \mapsto \phi(x)$ such that for $(p,q) \in W$ we have, for $x\in SU(2)$, 
$$\phi(x) (p,q)= \Ad \circ \mu \left( x \right) \phi(1) \left(\overline{x}p, \overline{x} q x   \right).$$

\subsubsection[\texorpdfstring{$G=U(1)$}{G=U(1)}]{{\boldmath $G=U(1)$}}

Here, $\mu:SU(2) \rightarrow U(1)$ must be trivial and $\mathfrak{g}=\mathbb{R}$.  
 We also have $\Hom(W, \mathbb{R}) \cong  \mathbb{H}^*\oplus \im(\mathbb{H})^* $ and we are left with analyzing when a $1$-form extends over  $Q$.  We describe $SU(2)$-equivariant homogeneous polynomials in $\mathbb{H}$ with values in $\im(\mathbb{H})^*$ and $\mathbb{H}^*$  independently.

\begin{itemize}
\item First we look for 
homogeneous polynomials $\mathbb{H} \rightarrow \im(\mathbb{H})^*$, given by $x\mapsto \psi(x)$ such that $\psi(x)(q)=\psi(1)(\overline{x}qx)$ for $x\in SU(2)$. These are generated by the degree $2$  polynomials 
$\psi(x)(q)=\langle qx, xl \rangle$, where $l \in \lbrace i,j,k \rbrace$. Under evaluation at $x=1$ these correspond to the $1$-forms $dq_i$ for $i=1,2,3$.

\item Next we look for   
homogeneous polynomials $\mathbb{H} \rightarrow \mathbb{H}^*$ given by $x \mapsto \psi(x)$ such that $\psi(x)(p)=\psi(1)(\overline{x} p )$ for $x\in SU(2)$. These 
are generated by the degree $1$ polynomials $\psi(x)(p)= \langle p, x l \rangle$, where $l \in \lbrace 1,i,j,k \rbrace$, which correspond under evaluation at $x=1$ to the $dp_i$'s for $i=0,1,2,3$. 
\end{itemize}

\noindent We now consider 
 extending the $1$-form
\begin{equation}\label{eq:b}
b  =  
\sum_{i=1}^3 b_i^+  \eta_i^+ +  \sum_{i=1}^3 b_i^-  \eta_i^- 
 =  
 \sum_{i=1}^3 \frac{b_i^+}{t}  dp_i +  \sum_{i=1}^3 \frac{b_i^- - b_i^+}{2} dq_i,
\end{equation}
where we used \eqref{eq:app.coframe}. Using the homogeneous polynomials in $\mathbb{H}$ computed above and Eschenburg--Wang's technique, we immediately deduce the following.

\begin{lemma}\label{lem:Extend1formOnBS}
The 1-form $b$ 
 as given in \eqref{eq:b} extends over the singular orbit $Q=SU(2)^2/\Delta SU(2)$ if and only if 
 the $b_{i}^{\pm}$'s are even and $b_i^{\pm}(0)=0$  for $i=1,2,3$.
\end{lemma}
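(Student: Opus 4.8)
The plan is to apply Eschenburg--Wang's criterion \cite{Eschenburg2000} directly, reusing the classification of $SU(2)$-equivariant homogeneous polynomials $\mathbb{H}\to\Hom(W,\mathbb{R})\cong\mathbb{H}^*\oplus\im(\mathbb{H})^*$ established in the two bullet points immediately preceding the statement. The only inputs needed are: the rewriting of $b$ in the coordinate coframe $\{dp_i,dq_i\}$ furnished by \eqref{eq:app.coframe}, namely the expression \eqref{eq:b}; and the homogeneity degrees attached to each coordinate $1$-form, i.e.\ the fact that $dp_1,dp_2,dp_3$ arise from degree-$1$ equivariant polynomials while $dq_1,dq_2,dq_3$ arise from degree-$2$ ones.

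First I would recall the shape of the Eschenburg--Wang criterion in this setting: an $SU(2)$-invariant $1$-form extends smoothly over $Q$ precisely when, writing it in the frame $\{dp_i,dq_i\}$ with coefficients regarded as functions of the radial parameter $t$, the coefficient of each basis $1$-form coming from a degree-$d$ equivariant polynomial equals $t^d$ times a smooth even function of $t$; equivalently, that coefficient has the parity of $d$ and vanishes to order at least $d$ at $t=0$. The factor $1/t$ appearing in $\eta_i^+=\tfrac1t\,dp_i-\tfrac12\,dq_i$ is exactly what shifts this bookkeeping, and tracking it is the one point requiring care.

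Then the verification is mechanical. From \eqref{eq:b} the coefficient of $dp_i$ is $b_i^+/t$; since $dp_i$ is a degree-$1$ generator, this must be odd and $O(t)$, which forces $b_i^+$ to be even with $b_i^+(0)=0$. The coefficient of $dq_i$ is $(b_i^--b_i^+)/2$; since $dq_i$ is a degree-$2$ generator, this must be even and $O(t^2)$, so $b_i^--b_i^+$ is even and vanishes at $t=0$. Combining this with the parity and vanishing already obtained for $b_i^+$ yields that $b_i^-$ is even with $b_i^-(0)=0$. The converse is the same computation read backwards: if all $b_i^{\pm}$ are even and vanish at the origin, then $b_i^+/t$ is odd of order $O(t)$ and $(b_i^--b_i^+)/2$ is even of order $O(t^2)$, so both coefficients lie in the admissible span and $b$ extends.

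I expect no genuine obstacle beyond correctly matching the degree/parity data to the two coordinate blocks; the potentially error-prone step is tracking how the $1/t$ in the coframe change \eqref{eq:app.coframe} converts the condition ``$b_i^+$ even, $b_i^+(0)=0$'' into the odd, first-order-vanishing condition demanded of the $dp_i$-coefficient. Everything else follows immediately from the equivariant polynomial basis already listed, which is why the conclusion can be stated as an immediate deduction.
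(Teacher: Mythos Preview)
Your proposal is correct and follows exactly the approach the paper has in mind: the paper's proof consists of the single sentence ``Using the homogeneous polynomials in $\mathbb{H}$ computed above and Eschenburg--Wang's technique, we immediately deduce the following,'' and you have simply spelled out this immediate deduction by matching the coefficients of $dp_i$ and $dq_i$ in \eqref{eq:b} against the degree-$1$ and degree-$2$ equivariant generators. The parity/order bookkeeping you carry out is precisely what the paper leaves implicit.
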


\subsubsection[\texorpdfstring{$G=SU(2)$}{G=SU(2)}]{{\boldmath $G=SU(2)$}}

In this case, using our earlier notation,  $\mu:SU(2) \rightarrow SU(2)$ must be either the identity $\mu=\id$ (up to conjugacy), or the trivial homomorphism $\mu=1$. Then, $\mathfrak{g}=\mathfrak{su}(2) \cong \im(\mathbb{H})$ and $\Ad \circ \mu$ is either the adjoint action $\Ad$ or trivial, respectively. We shall denote the respective bundles by $P_{\id}=(SU(2) \times SU(2))\times_{(\Delta SU(2), \id)} SU(2)$ and $P_{1}=(SU(2) \times SU(2))\times_{(\Delta SU(2), 1)} SU(2)$. The main result of this section considers the problem of extending $\mathfrak{su}(2)$-valued $1$-forms as in \eqref{eq:b2}. 

\begin{lemma}\label{lem:ExtendConnectionOnBS}
Let $b$ be an $\mathfrak{su}(2)$-valued $1$-form as in \eqref{eq:b2}. 
Write $b_i^{\pm}= \sum_{j=1}^3 b_{ij}^{\pm} T_j$, where $\lbrace T_i \rbrace_{i=1}^3$ is a standard basis for $\mathfrak{su}(2)$. 
Then the $1$-form $b$ extends over the singular orbit $Q=SU(2)^2/\Delta SU(2)$ on the bundle $P_{\mu}$ if:
\begin{itemize}
\item $\mu=\id$ and for $i=1,2,3$, $b_{ii}^{\pm}$ are even and there are $c_0^{-}, c_2^{\pm} \in \mathbb{R}$ such that 
$$b_{ii}^+ = c_2^+ t^2 + O(t^4), \quad b^-_{ii}= c_0^- + c_2^- t^2  + O(t^4);$$
and for $i \neq j$, $b_{ij}^{\pm}=O(t^4)$ are even;

\item  $\mu=1$ and the $b_{ij}^{\pm}$'s are even with $b_{ij}^{\pm}(0)=0$.
\end{itemize}
\end{lemma}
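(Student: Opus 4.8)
The plan is to apply Eschenburg--Wang's criterion exactly as in the proofs of Lemmas \ref{lem:ExtendingSmoothlyMetric} and \ref{lem:Extend1formOnBS}: a smooth $SU(2)$-equivariant $\mathfrak{g}$-valued $1$-form on a neighbourhood of $Q$ in the normal bundle is precisely a sum $\sum_\alpha f_\alpha(t^2)P_\alpha$, where the $P_\alpha$ range over $SU(2)$-equivariant homogeneous polynomials $\mathbb{H}\to\Hom(W,\mathfrak{g})$ of minimal degree realising a basis of $\Hom(W,\mathfrak{g})$, with $W=\mathbb{H}\oplus\im(\mathbb{H})$, and the $f_\alpha$ are smooth. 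Restricting $P_\alpha$ to the normal geodesic $x=t$ produces $t^{\deg P_\alpha}P_\alpha(1)$, so each coefficient direction of degree $d$ contributes an even or odd (according to the parity of $d$) function of $t$ vanishing to order at least $d$. I would first record the decomposition of the fibre, $\Hom(W,\mathfrak{g})=\Hom(\mathbb{H},\mathfrak{g})\oplus\Hom(\im(\mathbb{H}),\mathfrak{g})$, the first summand governing the $dp_i$-coefficients and the second the $dq_i$-coefficients, and then convert the resulting orders back into conditions on the $b_i^\pm$ using $\eta_i^+=\tfrac1t dp_i-\tfrac12 dq_i$ and $\eta_i^-=\tfrac12 dq_i$ from \eqref{eq:app.coframe}, which gives $b=\sum_i \tfrac{b_i^+}{t}\,dp_i+\tfrac{b_i^--b_i^+}{2}\,dq_i$ as in \eqref{eq:b}. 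Since the statement only asserts sufficiency, it is enough to check that fields obeying the listed Taylor conditions lie in this span.

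For $\mu=1$ the action $\Ad\circ\mu$ on $\mathfrak{g}=\im(\mathbb{H})$ is trivial, so $\Hom(W,\mathfrak{g})\cong\Hom(W,\mathbb{R})\otimes\mathfrak{g}$ and the whole problem decouples into three independent copies of the scalar problem already solved in Lemma \ref{lem:Extend1formOnBS}, one for each matrix entry $b_{ij}^\pm$ (the coefficient of $T_j$). Applying that lemma componentwise immediately yields that $b$ extends over $Q$ on $P_1$ provided every $b_{ij}^\pm$ is even with $b_{ij}^\pm(0)=0$, which is the claim in this case.

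For $\mu=\id$ the target carries the adjoint action, so equivariance reads $\psi(x)(q)=x\,\psi(1)(\overline{x}qx)\,\overline{x}$. On the $\Hom(\im(\mathbb{H}),\im(\mathbb{H}))=\End(\im(\mathbb{H}))$ summand this is the very computation from the Metrics subsection: the identity has degree $0$, the antisymmetric generators $q\mapsto[xl\overline{x},q]$ have degree $2$, and the symmetric traceless maps $q\mapsto\langle q,xl_1\overline{x}\rangle\,xl_2\overline{x}$ have degree $4$. On the $\Hom(\mathbb{H},\im(\mathbb{H}))$ summand I would exhibit the degree-$1$ map $p\mapsto\im(p\overline{x})$, whose value at $1$ is the projection $\mathbb{H}\to\im(\mathbb{H})$ (the identity on $p_1,p_2,p_3$), together with the degree-$3$ maps $p\mapsto\langle\im(\overline{x}p),l_1\rangle\,xl_2\overline{x}$ and $p\mapsto\langle x,p\rangle\,xl\overline{x}$, which account for the traceless part and the $p_0$-coupling. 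Feeding these degrees into $b=\sum_i\tfrac{b_i^+}{t}dp_i+\tfrac{b_i^--b_i^+}{2}dq_i$: the factor $\tfrac1t$ in front of $dp_i$ shifts the degree-$1$ (diagonal scalar) and degree-$3$ (traceless and off-diagonal) contributions so that the diagonal entries $b_{ii}^+$ are even and $O(t^2)$ with a common leading coefficient $c_2^+$, while $b_{ij}^+=O(t^4)$ for $i\neq j$; the $dq_i$-coefficient $\tfrac12(b_i^--b_i^+)$ then forces $b_{ii}^-$ even of the form $c_0^-+c_2^-t^2+O(t^4)$ (from degrees $0$ and $4$) with common constants, and $b_{ij}^-=O(t^4)$ for $i\neq j$. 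This is exactly the asserted condition on $P_{\id}$.

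The main technical obstacle will be the explicit determination and minimality check of the equivariant polynomials on the $\Hom(\mathbb{H},\im(\mathbb{H}))$ summand for $\mu=\id$, together with the careful bookkeeping of how the factor $\tfrac1t$ and the recombination of $dp_i,dq_i$ into $\eta_i^\pm$ shift parities and vanishing orders; this is where it is easy to lose or gain a power of $t$. I would note in passing that the antisymmetric $\End(\im(\mathbb{H}))$ generators already occur at degree $2$, so that the genuinely necessary condition on the antisymmetric part of the $dq$-coefficient is only $O(t^2)$. The $O(t^4)$ demanded for all off-diagonal $b_{ij}^-$ is therefore stronger than strictly necessary but still easily realised by smooth even coefficients, which is consistent with the statement being phrased as a sufficient (\emph{``if''}) condition rather than an equivalence.
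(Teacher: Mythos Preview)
Your proposal is correct and follows essentially the same Eschenburg--Wang strategy as the paper: decompose $\Hom(W,\mathfrak{g})$ into the $\mathbb{H}^*$- and $\im(\mathbb{H})^*$-pieces, list the minimal-degree equivariant homogeneous polynomials, and translate degrees into vanishing orders via \eqref{eq:app.coframe}. For $\mu=1$ your reduction to three independent copies of Lemma~\ref{lem:Extend1formOnBS} is exactly what the paper does.

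One refinement in your write-up is worth highlighting. For $\mu=\id$ you observe that the antisymmetric generators $q\mapsto[xl\bar{x},q]$ of $\End(\im(\mathbb{H}))$ already appear at degree $2$, whereas the paper only records the degree-$0$ identity and the degree-$4$ maps $q\mapsto\langle q,xl_1\bar{x}\rangle\,xl_2\bar{x}$. You are right that this makes the hypothesis $b_{ij}^{\pm}=O(t^4)$ for $i\neq j$ strictly stronger than the minimal Eschenburg--Wang condition on the antisymmetric part of the $dq$-coefficient, and you correctly reconcile this with the lemma being stated only as a sufficient condition. A minor point in the other direction: the paper records four degree-$1$ maps $\mathbb{H}\to\im(\mathbb{H})$ (namely $p\mapsto\im(pl\bar{x})$ for $l\in\{1,i,j,k\}$) rather than just the single $p\mapsto\im(p\bar{x})$ you mention; however, the three additional ones all contain a $dp_0$-component at $x=1$, and since $b$ has no $dt$-part they contribute nothing beyond degree-$3$ behaviour to the conditions on $b_{ij}^+$, so your simplification is harmless here.
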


\noindent The rest of this Appendix is concerned with the proof of Lemma \ref{lem:ExtendConnectionOnBS}.

\subsubsection*{Case {\boldmath $\mu=\id$}}

Here, we may write $\Ad(x) q= x q \overline{x}$ and $\Hom(W, \im(\mathbb{H})) \cong  ( \im(\mathbb{H})\otimes\mathbb{H}^* )\oplus ( \im(H)\otimes\im(\mathbb{H})^* ) $.
As before, we shall analyze $SU(2)$-equivariant  homogeneous polynomials in $\mathbb{H}$ with values in each of the components independently.

\begin{itemize}
\item We begin by looking for  
homogeneous polynomials $\mathbb{H} \rightarrow \im(\mathbb{H})\otimes\mathbb{H}^* $ given by $x \mapsto \psi(x)$ such that $\psi(x)(q)=x (\psi(1)(\overline{x} q x )) \overline{x}$ for $x\in SU(2)$. We have the constant polynomial corresponding to the identity, which is 
$$T_1\otimes dq_1  + T_2\otimes dq_2 + T_3\otimes dq_3 .$$ 
We also see that the degree $4$ polynomials $\psi(x)(q)=\langle q, xl_1\overline{x} \rangle xl_2\overline{x}$, where $l_1,l_2 \in \lbrace i,j,k \rbrace$, generate the space of  $T_j \otimes dq_i $ for $i,j=1,2,3$ when evaluated at $x=1$.

\item Next we look for homogeneous polynomials $\mathbb{H} \rightarrow \im(\mathbb{H})\otimes\mathbb{H}^* $ given by $x \mapsto \psi(x)$ such  that $\psi(x)(p)=x (\psi(1)(\overline{x} p )) \overline{x}$ for $x\in SU(2)$. The degree $1$ polynomials $\psi(x)(p)= pl \overline{x} + \langle p, x l \rangle $, where $l \in \lbrace 1,i,j,k \rbrace$, correspond under evaluation at $x=1$ to the maps
\begin{align*}\nonumber
T_1 \otimes dp_1  + T_2 \otimes dp_2  + T_3 \otimes dp_3 , &  & T_1 \otimes dp_0  - T_3 \otimes dp_2  + T_2 \otimes dp_3 , \\ \nonumber
T_2 \otimes dp_0 + T_3 \otimes dp_1  - T_1 \otimes dp_3, &  & T_3 \otimes dp_0  - T_2 \otimes dp_1  + T_1 \otimes dp_2.
\end{align*}
We also have $\SU(2)$-equivariant maps $\psi(x)(q)=\langle xl_1 ,  p \rangle xl_2\overline{x}$, for $l_1\in\{1,i,j,k\}$, $l_2\in\{i,j,k\}$ which are homogeneous of degree $3$. Taking $x=1$, these generate $T_j\otimes dp_i $, for $i=0,1,2,3$ and $j=1,2,3$.
\end{itemize}

\noindent Recall that our goal is to consider the problem of extending the $\mathfrak{su}(2)$-valued  $1$-form
\begin{equation}\nonumber
b  =  
\sum_{i=1}^3 b_i^+ \otimes \eta_i^+ +  \sum_{i=1}^3 b_i^- \otimes \eta_i^- 
 =  
 \sum_{i=1}^3 \frac{b_i^+}{t} \otimes dp_i +  \sum_{i=1}^3 \frac{b_i^--b_i^+}{2} \otimes dq_i,
\end{equation}
where we used \eqref{eq:app.coframe}. 
Since $b_i^{\pm}\in\mathfrak{su}(2)$, we can write $b_i^{\pm}= \sum_{j=1}^3 b_{ij}^{\pm} T_j$ and 
\begin{align}\nonumber
b  = &   \frac{b_{11}^+}{t} \sum_{i=1}^3 T_i \otimes dp_i  + \sum_{i=1}^3 \frac{b^+_{ii}- b_{11}^+}{t} T_i \otimes dp_i + \sum_{i \neq j} \frac{b^+_{ij}}{t} T_i \otimes dp_j \\ \nonumber
&   + \frac{b_{11}^-}{2} \sum_{i=1}^3 T_i \otimes dq_i + \sum_{i=1}^3 \frac{b_{ii}^- - b_{ii}^+ - b_{11}^-}{2} T_i \otimes dq_i + \sum_{i \neq j} \frac{b_{ij}^- - b_{ij}^+}{2} T_i \otimes dq_j.
\end{align}
Given the homogeneous polynomials in $\mathbb{H}$ computed above, we conclude that $b$ extends smoothly over $Q$ on $P_{\id}$ if and only if: $ \frac{b_{11}^+}{t}$ is odd; $\frac{b_{11}^-}{2}$ is even; $\frac{b^+_{ii}- b_{11}^+}{t}=O(t^3)$ and, for $i \neq j$, $\frac{b^+_{ij}}{t}=O(t^3)$ are odd; $\frac{b_{ii}^- - b_{ii}^+ - b_{11}^-}{2}=O(t^4)$ and, for $i \neq j$, $\frac{b_{ij}^- - b_{ij}^+}{2}=O(t^4)$ are even. Hence,  the $b_{ij}^+$  are all even and $b_{11}^+ = O(t^2)$, $b_{ii}^+ = b_{11}^+ +O(t^4)$ (so the $O(t^2)$ terms in all $b_{ii}^+$ coincide) and for $i \neq j$ we have $b_{ij}^+=O(t^4)$. Thus, the $b_{ij}^-$ must all be even, $b_{ii}^-=b_{11}^- + b_{ii}^+ + O(t^4)$ (so, up to order $O(t^4)$ the $b_{ii}^-$ do not depend on $i$) and for $i \neq j$ we have $b_{ij}^- = b_{ij}^+ +O(t^4)$. This proves the first part of Lemma \ref{lem:ExtendConnectionOnBS}.

\subsubsection*{Case {\boldmath $\mu=1$}}
 
 Here, $\Ad \circ \mu (x) q= q$, so we require homogeneous $SU(2)$-equivariant polynomials $\mathbb{H}\to\Hom(W,\mathbb{R}^3)$ where the action of $SU(2)$ on $\mathbb{R}^3$ is trivial.  This is essentially the same as the situation where the gauge group $G=U(1)$.  Therefore,  
as in that setting, we have degree 2 polynomials corresponding to $T_j\otimes dq_i$ for $i,j=1,2,3$ and degree 1 polynomials corresponding to $T_j\otimes dp_i$ for $i=0,1,2,3$ and $j=1,2,3$.

\noindent We can now consider the problem here of extending the $\mathfrak{su}(2)$-valued $1$-form $b$ in \eqref{eq:b2} over $Q$. As before, we write $b_i^{\pm}= \sum_{j=1}^3 b_{ij}^{\pm} T_j$ and deduce from 
\eqref{eq:app.coframe} that
\begin{align}\nonumber
b & =   \sum_{i,j=1}^3 \left( \frac{b_{ij}^+}{t}  T_j \otimes dp_i  +  \frac{b_{ij}^- - b_{ij}^+}{2} T_j \otimes dq_i \right).
\end{align}
Hence $b$ extends smoothly over $Q$ on $P_1$ if and only if: $ \frac{b_{ij}^+}{t}$ are odd, while the $\frac{b_{ij}^- -b_{ij}^+}{2}$ are even and must vanish at $t=0$. In other words, for all $i,j \in \lbrace 1,2,3 \rbrace$, $b_{ij}^{\pm} = O(t^2)$ and is even. This completes the proof of Lemma \ref{lem:ExtendConnectionOnBS}.

\bibliography{refsa}

\end{document}